\newtheorem{Thm}{Theorem}[section]
\newtheorem{Lem}[Thm]{Lemma}
\newtheorem{Cor}[Thm]{Corollary}
\newtheorem{Prop}[Thm]{Proposition}
\newtheorem{Rem}[Thm]{Remark}
\newcommand{\C}{\mathbb{C}}           
\newcommand{\Z}{\mathbb{Z}}
\newcommand{\ch}{\mathrm{ch}\,}
\newcommand{\fb}{{\mathfrak b}}
\newcommand{\fg}{{\mathfrak g}}
\newcommand{\fh}{{\mathfrak h}}
\newcommand{\fn}{{\mathfrak n}}
\newcommand{\fp}{{\mathfrak p}}
\newcommand{\hfg}{\hat{\fg}}
\newcommand{\hfb}{\hat{\fb}}
\newcommand{\hfn}{\hat{\fn}}
\newcommand{\hfp}{\hat{\fp}}
\newcommand{\hP}{\hat{P}}
\newcommand{\hQ}{\hat{Q}}
\newcommand{\hI}{\hat{I}}
\newcommand{\hW}{\hat{W}}
\newcommand{\ga}{\alpha}
\newcommand{\gb}{\beta}
\newcommand{\gl}{\lambda}
\newcommand{\gL}{\Lambda}
\newcommand{\gd}{\delta}
\newcommand{\gD}{\Delta}
\newcommand{\go}{\omega}
\renewcommand{\ggg}{\gamma}
\renewcommand{\hat}{\widehat}
\newcommand{\ol}{\overline}
\newcommand{\wt}{\mathrm{wt}}
\newcommand{\ba}{\bm{a}}
\renewcommand{\bf}{\bm{f}}
\newcommand{\be}{\bm{e}}
\newcommand{\br}{\bm{r}}
\newcommand{\bs}{\bm{s}}
\newcommand{\pr}{\mathrm{pr}}
\newcommand{\bL}{\mathbf{L}}
\newcommand{\hV}{\hat{V}}
\newcommand{\hgDre}{\hat{\gD}^{\mathrm{re}}}
\begin{document}

\title[Graded limits of minimal affinizations of type $G_2$]{Graded limits of minimal affinizations over the quantum loop algebra of type $G_2$}
\author{Jian-Rong Li and Katsuyuki Naoi}
\address{Jian-Rong Li, School of Mathematics and Statistics, Lanzhou University, Lanzhou 730000, P. R. China, and Einstein Institute of Mathematics, The Hebrew University of Jerusalem, Jerusalem 9190401, Israel}
\email{lijr07@gmail.com, lijr@lzu.edu.cn}
\address{Katsuyuki Naoi: Institute of Engineering, Tokyo University of Agriculture and Technology, 3-8-1 Harumi-cho, Fuchu-shi, Tokyo, Japan}
\email{naoik@cc.tuat.ac.jp}
\date{}
\maketitle

\begin{abstract}
The aim of this paper is to study the graded limits of minimal affinizations over the quantum loop algebra of type $G_2$. 
We show that the graded limits are isomorphic to multiple generalizations of Demazure modules, and obtain defining
relations of them. As an application, we obtain a polyhedral multiplicity formula for the decomposition of minimal affinizations of type $G_2$
as a $U_q(\mathfrak{g})$-module, by showing the corresponding formula for the graded limits. As another application, we prove a character formula of the least affinizations of generic parabolic Verma modules of type $G_2$ conjectured by Mukhin and Young.

\hspace{0.15cm}

\noindent
\textbf{Key words}: minimal affinizations; quantum loop algebras; current algebras

\hspace{0.15cm}

\noindent
\textbf{2010 Mathematics Subject Classification}: 17B37; 17B10; 17B67
\end{abstract}

\section{Introduction}
Let $\mathfrak{g}$ be a simple Lie algebra, $\mathbf{L}\mathfrak{g} = \mathfrak{g}\otimes \mathbb{C}[t, t^{-1}]$ the 
corresponding loop algebra, and $U_q(\mathbf{L}\mathfrak{g})$ the corresponding quantum loop algebra. 
Minimal affinizations of representations of quantum groups are an important family of simple 
$U_q(\mathbf{L}\mathfrak{g})$-modules which was introduced in \cite{MR1367675}. 
The celebrated Kirillov-Reshetikhin modules are examples of minimal affinizations.

Graded limits of minimal affinizations, which are graded analogs of the classical limits defined over the current algebra
$\fg[t] = \fg \otimes \C[t]$, were studied in \cite{MR1836791}, \cite{MR2238884}, \cite{MR2587436}, \cite{MR2896463},
\cite{MR3120578}, \cite{MR3210588}. 

Minimal affinizations over the quantum loop algebra of type $G_2$ were studied in \cite{MR1367675}, \cite{MR2372556}, \cite{MPr07}, \cite{LM13}, \cite{QL14}. 
The aim of this paper is to study the graded limits of minimal affinizations over the quantum loop algebra of type $G_2$. 

Assume that $\fg$ is of type $G_2$.
Let $L(m)$ be the graded limit of a minimal affinization with highest weight $\gl$, and let $M(\gl)$ be the $\fg[t]$-module 
generated by a nonzero vector $v_\gl$ with certain relations. 
Our first main result (Theorem \ref{isomorphism of modules}) is that $M(\gl) \cong L(m) \cong T(\gl)$, where $T(\gl)$ is 
some generalized Demazure module. 
These isomorphisms were previously conjectured by Moura in \cite{MR2587436}.

Let $\go_1$ (resp.\ $\go_2$) be the fundamental weight with respect to the long (resp.\ short) simple root, 
and assume that $\gl = k\go_1+l\go_2$.
Using the above isomorphisms, we obtain the following polyhedral multiplicity formula as a $\fg$-module
(Theorem \ref{polyhedral multiplicity formula})
\[ L(m) \cong \bigoplus_{(a_1,\ldots,a_5) \in S_\gl} V\big((k -a_1 + a_3 + a_4 -a_5)\go_1+(l-a_2 -3a_3-3a_4)\go_2\big)
\]
where
\[ S_\gl=\big\{(a_1,\ldots,a_5) \in \Z_+^5 \bigm| a_1 \leq k, \ a_1 - a_3 + a_5 \leq k, \ 2a_2 + 3a_3 + 3a_4 \leq l,\ 
   2a_2+3a_4+3a_5 \leq l\big\}.
\]
Here $V(\mu)$ denotes the simple $\fg$-module with highest weight $\mu$.
As an immediate corollary, we obtain a similar formula for the multiplicity of minimal affinizations as a $U_q(\fg)$-module (Corollary \ref{multiplicity formula for decomposition of minimal affinizations of type G2}). This formula is a generalization of the one given in \cite{MR2372556}, in which the formula for Kirillov-Reshetikhin modules 
(i.e.\ the case $k=0$ or $l=0$) is given.

We also give a formula for the limit of normalized characters (Corollary \ref{Conjecture of the character formula of least 
affinizations of Verma modules by Mukhin and Young}), which yields the character formula of least affinizations of 
generic parabolic Verma modules of type $G_2$ 
conjectured by Mukhin and Young \cite[Conjecture 6.3]{MY14}.

The paper is organized as follows. In Section \ref{Section:Background}, 
we give some background information about the quantum loop algebra of type $G_2$. 
In Section \ref{Section:Main}, we describe our main results in this paper. 
In Section \ref{Section:isom}, we prove Theorem \ref{isomorphism of modules}. In Section \ref{Section:Polyhedral}, 
we prove Theorem \ref{polyhedral multiplicity formula}.

\section{Background}\label{Section:Background}

Let $\Z$ be the set of integers, and $\Z_+$ the set of nonnegative integers.
In this paper, we take $\mathfrak{g}$ to be the complex simple Lie algebra of type $G_2$. 
Let $\fh$ be a Cartan subalgebra and $\fb$ a Borel subalgebra containing $\fh$.
Let $I=\{1, 2\}$.
We choose simple roots $\alpha_1, \alpha_2$ and scalar product $(\cdot, \cdot)$ such that
\begin{align*}
( \alpha_1, \alpha_1 ) = 6, \ ( \alpha_1, \alpha_2 )=-3, \ ( \alpha_2, \alpha_2 )=2.
\end{align*}
Therefore $\alpha_1$ is the long simple root and $\alpha_2$ is the short simple root. The set of long positive roots is
\begin{align*}
\{ \alpha_1, \ \alpha_1 + 3 \alpha_2, \ 2 \alpha_1 + 3 \alpha_2 \}.
\end{align*}
The set of short positive roots is
\begin{align*}
\{ \alpha_2, \ \alpha_1 + \alpha_2, \ \alpha_1 + 2 \alpha_2 \}.
\end{align*}
Let $\Delta_+$ denote the set of positive roots. Denote by $\gD$ the root system of $\fg$, and by $\gD_+$ the set of positive roots.
Let $W$ denote the Weyl group with simple reflections $s_i$ ($i\in I$).
Denote by $\fg_\ga$ ($\ga \in \gD$) the corresponding root space,
and for each $\ga \in \gD_+$ fix nonzero elements $e_\ga \in \fg_\ga$, $f_\ga \in \fg_{-\ga}$ and $\ga^\vee \in \fh$ such that
\[ [e_\ga, f_\ga] = \ga^\vee, \ \ \ [\ga^\vee, e_\ga] = 2 e_\ga, \ \ \ [\ga^\vee, f_\ga] = -2 f_\ga.
\]
We also use the notation $e_i = e_{\ga_i}$, $f_i = f_{\ga_i}$ for $i \in I$, and $e_{-\ga} = f_\ga$ for $\ga \in \gD_+$.
Set $\fn_{\pm} = \bigoplus_{\ga \in \gD_+} \fg_{\pm \ga}$.

Let $\omega_i$ ($i \in I$) be the fundamental weight.
We have $\omega_1 = 2 \alpha_1 + 3 \alpha_2$, $\omega_2 = \alpha_1 + 2 \alpha_2$.
Let $P$ be the weight lattice, and
\[ P_+ = \sum_{i\in I}\Z_+ \go_i \subseteq P, \ \ \ Q_+ = \sum_{i \in I} \Z_+ \ga_i \subseteq P.
\]
Note that $P$ coincides with the root lattice $\sum_{i\in I} \Z\ga_i$, but $P_+ \neq Q_+$.
We write $\gl \le \mu$ for $\gl, \mu \in P$ if $\mu-\gl \in Q_+$.
For $\gl \in P_+$, denote by $V(\gl)$ the simple $\fg$-module with highest weight $\gl$.

Let $\hfg = \fg \otimes \C[t,t^{-1}] \oplus \C K \oplus \C d$ be the affine Kac-Moody Lie algebra associated with $\fg$,
where $K$ is the canonical central element and $d$ is the degree operator.
Let $\hI = \{0,1,2\}$, and 
\[ e_0 = f_{2\ga_1+3\ga_2} \otimes t, \ \ \ f_0 = e_{2\ga_1+3\ga_2} \otimes t^{-1}.
\]
In this paper, we put \ $\hat{}$ \ to denote the objects associated with $\hfg$.
For example, $\hP$ and $\hQ$ denote the weight and root lattices of $\hfg$ respectively, and so on.
Let $\gd \in \hP$ be the null root, and denote by $\gL_0 \in \hP_+$ the unique dominant integral weight of $\hfg$ satisfying 
\[ \langle \ga_i^\vee,\gL_0\rangle = 0 \ \text{for} \ i\in I, \ \ \ \langle K,\gL_0\rangle = 1, \ \ \ \langle d,\gL_0 \rangle
   = 0.
\]

Let $\bL\fg = \fg \otimes \C[t,t^{-1}]$ and $\fg[t] = \fg \otimes \C[t]$ be the loop algebra and the current algebra 
associated with $\fg$ respectively, whose Lie algebra structures are given by
\[ [x \otimes f(t), y \otimes g(t)] = [x,y] \otimes f(t)g(t).
\]
Note that $\fg[t]$ is naturally considered as a Lie subalgebra of $\hfg$.

Quantum groups are introduced independently by Jimbo \cite{MR797001} and Drinfeld \cite{MR914215}. Quantum loop algebra are infinite-dimensional quantum groups. The quantum loop algebra $U_q(\bL\fg)$ in Drinfeld's new realization is a $\C(q)$-algebra 
generated by $x_{i, n}^{\pm}$ ($i\in I, n\in 
\mathbb{Z}$), $k_i^{\pm 1}$ $(i\in I)$, $h_{i, n}$ ($i\in I, n\in \mathbb{Z}\backslash \{0\}$), 
subject to certain relations, see \cite{MR914215}.
Denote by $U_q(\fg)$ the subalgebra of $U_q(\bL\fg)$ generated by $x_{i,0}^{\pm}$ ($i \in I$), $k_i^{\pm 1}$ ($i \in I$),
which is isomorphic to the quantized enveloping algebra associated with $\fg$.
For $\gl \in P_+$, let $V_q(\gl)$ denote the finite-dimensional simple $U_q(\fg)$-module of type $1$
with highest weight $\gl$.

Simple $U_q(\mathbf{L}\mathfrak{g})$-modules are parametrized by dominant monomials in 
$\mathbb{Z}[Y_{i,a}^{\pm 1}]_{i \in I, a \in \mathbb{C}(q)^{\times}}$, where $Y_{i,a}^{\pm 1}$'s are formal variables, and 
a monomial $m = \prod_{i \in I, a \in \mathbb{C}(q)^{\times}}Y_{i,a}^{u_{i,a}}$ is dominant if $u_{i,a} \geq 0$ for all 
$i$ and $a$ (see \cite{MR1357195}, or \cite{MR1745260} for the present formulation). 
For a dominant monomial $m$, denote by $L_q(m)$ the corresponding simple $U_q(\mathbf{L}\mathfrak{g})$-module. Let $\mathcal{P}_+$ be the monoid generated by $\{Y_{i, a}| i \in I, a \in \mathbb{C}^{\times} q^{\mathbb{Z}}\}$.

Let $\lambda = k\omega_1 + l \omega_2$, $k, l \in \mathbb{Z}_+$. A simple $U_q (\bL\fg) $-module $L_q(m)$ is a 
\textit{minimal affinization} of $V_q(\lambda)$ if and only if $m$ is one of the following monomials
\begin{align}\label{eq:l-highest-wt}
\left( \prod_{i=0}^{k-1} Y_{1, aq^{6i}} \right) \left( \prod_{i=0}^{l-1} Y_{2, aq^{6k+2i+1}} \right), \qquad
\left( \prod_{i=0}^{l-1} Y_{2, aq^{2i}} \right) \left( \prod_{i=0}^{k-1} Y_{1, aq^{2l+6i+5}} \right),
\end{align}
for some $a\in \mathbb{C}(q)^{\times}$, see \cite{MR1347873}.

\section{Main results}\label{Section:Main}

The aim of this paper is to study the graded limits of minimal affinizations in type $G_2$.
So let us recall the definition of the graded limits.

Let $\gl = k\go_1 + l \go_2$, and $m$ be one of the monomials in (\ref{eq:l-highest-wt}).
Without loss of generality, we may assume that $a \in \C^\times$.
Let $\mathbf{A} = \C[q,q^{-1}]$, $U_{\mathbf{A}}(\bL\fg)$ be the $\mathbf{A}$-lattice of $U_q(\bL\fg)$ 
(see \cite{MR1300632}), and $L_{\mathbf{A}}(m) = U_{\mathbf{A}}(\bL\fg)v_m$ where $v_m$ is a highest $\ell$-weight vector of 
$L_q(m)$.
Then 
\[ \ol{L_q(m)} = L_{\mathbf{A}}(m)\otimes_{\mathbf{A}} \C
\]
becomes a finite-dimensional $\bL\fg$-module called the \textit{classical limit} of $L_q(m)$, 
where we identify $\C$ with $\mathbf{A}/\langle q -1 \rangle$.
Define a Lie algebra automorphism $\varphi_a\colon \fg[t] \to \fg[t]$ by 
\[ \varphi_a\big(x \otimes f(t)\big) = x \otimes f(t - a) \ \ \ \text{for} \ x \in \fg, f \in \C[t].
\]
Now we consider $\ol{L_q(m)}$ as a $\fg[t]$-module by restriction, and
define a $\fg[t]$-module $L(m)$ by the pull-back $\varphi_a^*\big(\ol{L_q(m)}\big)$.
We call $L(m)$ the \textit{graded limit} of $L_q(m)$. 
By the construction we have for every $\mu \in P_+$ that 
\begin{equation}\label{eq:multi}
 \Big[L_q(m) : V_q(\mu)\Big] = \Big[L(m): V(\mu)\Big],
\end{equation}
where the left- and right-hand sides are the multiplicities 
as a $U_q(\fg)$-module and $\fg$-module, respectively.

Now we shall state our first main theorem, which gives isomorphisms between $L(m)$ and other two 
$\fg[t]$-modules.
Let $M(\gl)$ be the $\fg[t]$-module generated by a nonzero vector $v_M$ with relations
\begin{align}\label{eq:relations_of_M}
 \fn_+[t]v_M=0, \ \ \ &(h \otimes t^k)v_M = \gd_{k0}\langle h,\gl\rangle v_M \ \mathrm{for} \ h \in \fh, \ \ \ 
 f_i^{\langle \ga_i^\vee, \gl \rangle +1}v_M=0 \ \text{for} \ i \in I,\nonumber\\
 &(f_{\ga_1} \otimes t) v_M =0, \ \ \ (f_{\ga_2} \otimes t) v_M = 0,\ \ \ (f_{\ga_1+\ga_2}\otimes t)v_M=0.
\end{align}
The other $\fg[t]$-module is a multiple generalization of a Demazure module defined as follows.
Let $\xi_1,\ldots,\xi_p$ be a sequence of elements of $\hat{P}$, and assume for each $1\le i \le p$ that there exists $\gL^i 
\in \hP_+$ such that $\xi_i$ belongs to the affine Weyl group orbit $\hW\gL^i$ of $\gL^i$.
Let $\hV(\gL^i)$ denote the simple highest weight $\hfg$-module with highest weight $\gL^i$, and $v_{\xi_i} \in 
\hV(\gL^i)_{\xi_i}$ be an extremal weight vector with weight $\xi_i$.
We define a $\hfb$-module $D(\xi_1,\ldots,\xi_p)$ by 
\begin{equation}\label{eq:module_D}
 D(\xi_1,\ldots,\xi_p) = U(\hfb)(v_{\xi_1}\otimes \cdots \otimes v_{\xi_p}) \subseteq \hV(\gL^1) \otimes \cdots \otimes 
 \hV(\gL^p).
\end{equation}
Here $\hfb = \fb \oplus \C K \oplus \C d \oplus t\fg[t]$ is the standard Borel subalgebra of $\hfg$.

\begin{Rem}\normalfont
 For any $c_1,\ldots,c_p \in \Z$, it obviously holds that
 \[ D(\xi_1 + c_1\gd, \ldots,\xi_p + c_p\gd) \cong D(\xi_1,\ldots,\xi_p)
 \]
 as $\big(\fb \oplus t\fg[t]\big)$-modules.
\end{Rem}

Now write $l = 3r + s$ with $r \in \Z_+$, $s \in \{0,1,2\}$, and set
\[ T(\gl) = \begin{cases} D\big(k(-\go_1+\gL_0), r(-3\go_2 + \gL_0)\big) & \text{if} \ s = 0,\\
                          D\big(k(-\go_1+\gL_0),r(-3\go_2+\gL_0), -s\go_2+\gL_0\big) & \text{otherwise}.
            \end{cases}
\]
Note that $T(\gl)$ is extended to a module over $\fg[t] \oplus \C K \oplus \C d$,
and as a $\fg[t]$-module $T(\gl)$ is generated by the one-dimensional weight space $T(\gl)_\gl$.

Our first main theorem is the following theorem.
\begin{Thm} \label{isomorphism of modules}
 As a $\fg[t]$-module, we have
 \[  M(\gl) \cong L(m)\cong T(\gl).
 \]
\end{Thm}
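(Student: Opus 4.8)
The plan is to sandwich the three modules between surjections and a two-sided bound on graded $\fg$-characters. Concretely I would establish: (i) a surjection $M(\gl)\twoheadrightarrow L(m)$; (ii) a surjection $M(\gl)\twoheadrightarrow T(\gl)$; (iii) the upper bound $\ch M(\gl)\le\ch T(\gl)$; and (iv) the lower bound $\ch L(m)\ge\ch T(\gl)$, where all characters are taken with respect to the $\fg$-weight together with the $t$-grading. Granting these, (ii) and (iii) give $\ch M(\gl)=\ch T(\gl)$ and hence $M(\gl)\cong T(\gl)$; then (i) gives $\ch M(\gl)\ge\ch L(m)$ while (iv) gives $\ch L(m)\ge\ch T(\gl)=\ch M(\gl)$, so that $\ch L(m)=\ch M(\gl)$ and the surjection (i) is an isomorphism.

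For (i), recall that $L(m)$ is cyclic over $\fg[t]$, generated by the image of the highest $\ell$-weight vector $v_m$, so by the universal property of $M(\gl)$ it suffices to verify that this image satisfies the relations \eqref{eq:relations_of_M}. The relations $\fn_+[t]v_m=0$, the Cartan relation, and $f_i^{\langle\ga_i^\vee,\gl\rangle+1}v_m=0$ are immediate from $v_m$ being a highest $\ell$-weight vector of $U_q(\fg)$-highest weight $\gl$. The substantive relations are the three degree-one ones $(f_{\ga_1}\otimes t)v_m=(f_{\ga_2}\otimes t)v_m=(f_{\ga_1+\ga_2}\otimes t)v_m=0$, which I would verify by passing to the classical limit of the Drinfeld generators $x^-_{i,n}$ and extracting the relevant scalars from the explicit $\ell$-highest weight monomial in \eqref{eq:l-highest-wt}. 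That exactly these three roots occur --- and not $\ga_1+2\ga_2$, $\ga_1+3\ga_2$ or $2\ga_1+3\ga_2$ --- is forced by the $G_2$ geometry of $m$ and must be tracked with care.

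For (ii), I would check that a generator of the one-dimensional space $T(\gl)_\gl$, namely the tensor product $v_{\xi_1}\otimes\cdots\otimes v_{\xi_p}$ of extremal weight vectors, also satisfies \eqref{eq:relations_of_M}; the $\fn_+[t]$, Cartan, and $f_i^{\langle\ga_i^\vee,\gl\rangle+1}$ relations hold because this vector is a highest weight vector inside a tensor product of integrable $\hfg$-modules, and the degree-one relations follow by computing each $f_\ga\otimes t$ on the generating tensor from the explicit Demazure structure of the factors $\hV(\gL^i)$. Step (iii) is the technical heart: using the relations of $M(\gl)$ I would extract an ordered PBW spanning set in the elements $f_\ga\otimes t^r$, with exponent ranges cut down by the degree-one relations and the relations $f_i^{\langle\ga_i^\vee,\gl\rangle+1}v_M=0$, and bound the resulting graded count by $\ch T(\gl)$. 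I expect the cleanest route is to realize $M(\gl)$ as a quotient of a fusion product of copies of the Kirillov--Reshetikhin-type modules $M(\go_1)$ and $M(\go_2)$ and to invoke the known Demazure-flag structure of such fusion products (Feigin--Loktev, Fourier--Littelmann, Naoi), which identifies their graded character with that of the generalized Demazure module $T(\gl)$.

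For the lower bound (iv), I would use that the minimal affinization carries a crystal basis compatible with passage to the classical limit, so that the Demazure crystal underlying $T(\gl)$ embeds into the crystal of $L(m)$ and yields $\ch L(m)\ge\ch T(\gl)$; equivalently one may identify $L(m)$ with a fusion-product limit whose character is bounded below by $\ch T(\gl)$. I expect step (iii) to be the main obstacle: since the degree-one relations of $M(\gl)$ single out the non-simple root $\ga_1+\ga_2$ while omitting the remaining positive roots, the combinatorics of which PBW monomials survive is genuinely specific to type $G_2$, and matching the surviving graded count precisely to the Demazure character of $T(\gl)$ is where the real work lies.
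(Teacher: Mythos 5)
Your overall architecture is logically sound, and two of your four steps are fine: (i) is exactly the paper's first step (though the substantive input there is Chari's result that $\bigoplus_{\mu\ge\gl-\ga_1-\ga_2}L_q(m)_\mu$ lies in $U_q(\fg)v_m$, not a direct scalar extraction from the Drinfeld monomial --- $f_{\ga_1+\ga_2}$ is not a Drinfeld generator, so one needs the commutator $[x^-_{\ga_1,1},x^-_{\ga_2,0}]$ together with that weight-space containment); and your (ii) is in fact easier than you suggest, since the $\fg[t]$-generator $v_T$ of $T(\gl)$ is a tensor product of $\hfg$-highest weight vectors, hence annihilated by all of $\hfn_+\supseteq\fn_+[t]\oplus t\fg[t]$, so every relation in \eqref{eq:relations_of_M} holds for $v_T$ trivially. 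The genuine gap is in (iii) and (iv). For (iii) you invoke ``the known Demazure-flag structure of such fusion products (Feigin--Loktev, Fourier--Littelmann, Naoi)'' to match $\ch M(\gl)$ with $\ch T(\gl)$; but no such result exists for the $G_2$ module $T(\gl)$, which is a \emph{generalized} Demazure module built from extremal vectors in modules of different levels ($k(-\go_1+\gL_0)$, $r(-3\go_2+\gL_0)$, $-s\go_2+\gL_0$). The identification $M(\gl)\cong T(\gl)$ is precisely the content of Moura's conjecture that this paper proves, so citing it begs the question. For (iv), minimal affinizations of mixed highest weight in type $G_2$ are not known to admit crystal bases, so the proposed embedding of the Demazure crystal of $T(\gl)$ into a crystal of $L(m)$ rests on an unavailable tool.

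The paper's actual route avoids character estimates entirely: it constructs a \emph{cycle} of surjections $M(\gl)\twoheadrightarrow L(m)\twoheadrightarrow T(\gl)\twoheadrightarrow M(\gl)$, which forces all three finite-dimensional modules to be isomorphic. The step $L(m)\twoheadrightarrow T(\gl)$ --- the working substitute for your (iv) --- comes from Chari's tensor-product theorem giving an embedding $L_q(m)\hookrightarrow L_q(m_1)\otimes L_q(m_2)$ (with $m_1$, $m_2$ the $Y_{1,\ast}$- and $Y_{2,\ast}$-parts of $m$), together with the identification of $L(m_1)$, $L(m_2)$ with Kirillov--Reshetikhin modules and hence, via Fourier--Littelmann and Chari--Moura, with the Demazure factors of $T(\gl)$. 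The step $T(\gl)\twoheadrightarrow M(\gl)$ --- which supplies what your (iii) was meant to provide --- is the technical heart: the paper computes the full annihilator $\mathrm{Ann}_{U(\hfn_+)}(v_T)$ explicitly (Lemma \ref{Lem:Ann}, via Mathieu's description of annihilators of extremal vectors and a lemma of Naoi), obtaining generators $x_\ggg^{\rho(\ggg)+1}$ plus the extra element $f_{\ga_1+3\ga_2}t^2(f_{\ga_1+2\ga_2}t)^{3r-2}$, and then verifies by hand that $v_M$ satisfies these relations (Proposition \ref{Prop:annihilation}), using $G_2$-specific divided-power identities and a polynomial argument exploiting divisibility by $(x^3-1)^{r+1}$. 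Some replacement for this explicit computation is unavoidable; your PBW-spanning-set idea points in that direction but, as stated, does not yet produce the required bound $\ch M(\gl)\le\ch T(\gl)$.
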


The second main theorem gives a multiplicity formula for $L(m)$ as a $\fg$-module.
For $\gl = k\go_1+l\go_2$, define a subset $S_\gl \subseteq \Z_+^5$ by 
\[ S_\gl = \big\{ (a_1,\ldots,a_5)\bigm| a_1 \leq k, \ a_1 - a_3 + a_5 \leq k, \ 2a_2 + 3a_3 + 3a_4 \leq l,\ 2a_2+3a_4+3a_5 
   \leq l\big\}.
\]

\begin{Thm} \label{polyhedral multiplicity formula}
 As a $\fg$-module, 
 \[ L(m) \cong \bigoplus_{(a_1,\ldots,a_5) \in S_\gl} V\big((k -a_1 + a_3 + a_4 -a_5)\go_1+(l-a_2 -3a_3-3a_4)\go_2\big).
 \]
\end{Thm}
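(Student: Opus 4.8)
The plan is to translate the statement into a character identity and prove it by a two-sided multiplicity estimate, using the two concrete models for $L(m)$ furnished by Theorem \ref{isomorphism of modules}. Since $[L(m):V(\mu)]$ equals the dimension of the space of $\fg$-highest weight vectors of weight $\mu$ in $L(m)$, it suffices to put these highest weight vectors in bijection with the lattice points $(a_1,\ldots,a_5)\in S_\gl$ for which $\mu=(k-a_1+a_3+a_4-a_5)\go_1+(l-a_2-3a_3-3a_4)\go_2$. I would establish the inequalities $[L(m):V(\mu)]\le \#\{(a_i)\}$ and $[L(m):V(\mu)]\ge \#\{(a_i)\}$ separately, using the presentation $M(\gl)$ for the upper bound and the generalized Demazure module $T(\gl)$ for the lower bound.

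For the upper bound I would work entirely inside $M(\gl)$. Because $\fn_+[t]v_M=0$ and $\fh[t]$ acts on $v_M$ by scalars, the PBW theorem gives $M(\gl)=U(\fn_-[t])v_M$, so $M(\gl)$ is spanned by ordered monomials in the root operators $f_\gb\otimes t^j$ ($\gb\in\gD_+$, $j\ge 0$). The degree-one relations $(f_{\ga_1}\otimes t)v_M=(f_{\ga_2}\otimes t)v_M=(f_{\ga_1+\ga_2}\otimes t)v_M=0$, bracketed repeatedly against the raising operators $e_i\otimes 1$ and combined with the $G_2$ structure constants, should force every monomial to reduce to one built from five distinguished lowering operators: two supported on the long root $2\ga_1+3\ga_2$, two on the long root $\ga_1+3\ga_2$, and one on the short root $\ga_1+2\ga_2$, the two members of each long-root pair being distinguished by their $t$-degree. (This assignment is dictated by the weight drops: $a_1,a_5$ each remove $2\ga_1+3\ga_2=\go_1$, $a_3,a_4$ each remove $\ga_1+3\ga_2=-\go_1+3\go_2$, and $a_2$ removes $\ga_1+2\ga_2=\go_2$.) Imposing in addition the Serre-type relations $f_i^{\langle\ga_i^\vee,\gl\rangle+1}v_M=0$ and the $\mathfrak{sl}_2$-string bounds for the two simple roots acting on such monomials should cut the admissible exponents down to exactly the four inequalities defining $S_\gl$, yielding $[L(m):V(\mu)]\le\#\{(a_i)\in S_\gl : \mu=\cdots\}$.

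For the lower bound I would pass to the generalized Demazure model $T(\gl)$. A generalized Demazure module carries a crystal basis, and its character is computed by applying an iterated sequence of Demazure operators to $e^{\gl}$; restricting this character to $\fg$ and extracting the crystal elements annihilated by $\tilde e_1$ and $\tilde e_2$ should produce a set of $\fg$-highest weights indexed exactly by the lattice points of $S_\gl$. As an anchor, the two Demazure factors $D\big(k(-\go_1+\gL_0)\big)$ and $D\big(r(-3\go_2+\gL_0)\big)$ are the graded limits in the Kirillov--Reshetikhin cases $l=0$ and $k=0$, whose $\fg$-decompositions are the two boundary specialisations of the formula computed in \cite{MR2372556}; realising these inside the tensor product and tracking how the second and third extremal vectors enlarge the space of highest weight vectors should exhibit one linearly independent highest weight vector for each point of $S_\gl$, giving the reverse inequality. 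Matching the two bounds then proves the theorem.

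The main obstacle is the span-control step of the upper bound: one must verify that the three vanishing degree-one operators, after being commuted through all raising operators, genuinely annihilate or reduce every higher-degree monomial, so that only the five distinguished operators survive and no index set larger than $S_\gl$ appears. The non-trivial $G_2$ Serre relations and the length-three $\ga_2$-strings through the long root vectors make this reduction delicate, and it is precisely here that the four inequalities cutting out $S_\gl$---rather than some coarser polyhedron---are forced. Confirming that the lower-bound count and the upper-bound count agree point-for-point on $S_\gl$, with no accidental degeneracy in either direction, is the crux of the argument.
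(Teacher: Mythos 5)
Your overall skeleton---count multiplicities via highest weight vectors, use $M(\gl)$ for an upper bound and the Demazure/tensor realization for a lower bound, anchor in the Kirillov--Reshetikhin cases of \cite{MR2372556}---points in the right direction, but both halves contain genuine gaps at exactly the places you flag as "should" steps. For the upper bound, the four inequalities cutting out $S_\gl$ are \emph{not} consequences of the degree-one relations, the Serre relations, and $\mathfrak{sl}_2$-string bounds applied to the monomials: for $\ba \notin S_\gl$ the vectors $\bf_{\ba}v$ are in general nonzero in $L(m)$, and no amount of commuting the vanishing operators through raising operators annihilates them. What actually happens (Lemma \ref{Lemma:span}) is a relative statement: one computes in the quotient $L(m)\big/U(\fg)L(m)_{>\gl-\ga}$ and shows that each $\bf_{\br}v$ with $\br$ violating an inequality is congruent, modulo $U(\fg)L(m)_{>\gl-\ga}$ and lexicographically smaller monomials, to redundant terms. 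This requires mechanisms absent from your sketch: a Heisenberg-algebra straightening from \cite{MR2372556} for $r_1\le k$; a Weyl-reflection weight estimate ($s_1\wt(\bf_{\bs}v) > \gl-\ga$ forces $\bf_{\bs}v \in U(\fg)L(m)_{>\gl-\ga}$) for $r_1-r_3+r_5\le k$; and, for $2r_2+3r_4+3r_5\le l$, an invertibility argument for a triangular system of straightening relations, settled by showing that an auxiliary polynomial divisible by $(x^3-1)^{r+1}$ must vanish. Moreover, even the preliminary reduction to the five distinguished operators needs the full annihilation package of Proposition \ref{Prop:annihilation}---including the relations $(f_{\ga_1+2\ga_2}t)^{3r+1}v_M=0$, $(f_{\ga_1+3\ga_2}t^2)^{r+1}v_M=0$ and the mixed relation $f_{\ga_1+3\ga_2}t^2(f_{\ga_1+2\ga_2}t)^{3r-2}v_M=0$---each of which takes a nontrivial computation with the identity (\ref{eq:Lemma(ii)}); these do not follow from the defining relations of $M(\gl)$ by inspection.

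For the lower bound, your crystal-basis/Demazure-operator route is not carried out and is far from automatic. While generalized Demazure modules do admit character formulas by iterated Demazure operators, extracting from them a polyhedral parametrization of the $\fg$-highest weights is itself the hard combinatorial problem, and your KR anchors cannot supply it: the boundary cases $k=0$ and $l=0$ say nothing about the mixed inequalities $a_1-a_3+a_5\le k$ and $2a_2+3a_4+3a_5\le l$, which couple the two tensor factors. The paper instead proves linear independence (Lemma \ref{Lem:linearly_independence}) by induction on $k$, embedding $L(m)\hookrightarrow L_1\otimes L_2$ with $L_1\cong V(\go_1)\oplus V(0)$, composing with carefully chosen projections to kill unwanted components, exploiting the key congruence (\ref{eq:claim}) relating $\bf_{\ba}v_2$ to $f_{\ga_1}\bf_{\ba+\be_4-\be_5}v_2$ for the boundary stratum $S^{0,k}_\gl[\ga]$, and deriving a final contradiction from $f_{\ga_1}v_1\neq 0$. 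Your proposal would need an equally concrete substitute for this inductive mechanism; as written, both the "cut down to exactly $S_\gl$" step and the "indexed exactly by the lattice points of $S_\gl$" step are hopes rather than arguments, and they are precisely where the content of the theorem lies.
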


By (\ref{eq:multi}), we immediately obtain the following corollary.

\begin{Cor} \label{multiplicity formula for decomposition of minimal affinizations of type G2}
 As a $U_q(\fg)$-module,
 \[ L_q(m)\cong \bigoplus_{(a_1,\ldots,a_5) \in S_\gl} V_q\big((k -a_1 + a_3 + a_4 -a_5)\go_1+(l-a_2 -3a_3-3a_4)\go_2\big).
 \]
\end{Cor}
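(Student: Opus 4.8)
The plan is to use Theorem \ref{isomorphism of modules} to move the computation onto the two concrete models $M(\gl)$ and $T(\gl)$, and then to prove the formula by a squeeze at the level of $\fh$-characters. Write $F_\gl=\sum_{(a_1,\ldots,a_5)\in S_\gl}\ch V\big((k-a_1+a_3+a_4-a_5)\go_1+(l-a_2-3a_3-3a_4)\go_2\big)\in\Z[P]$ for the character of the claimed right-hand side. Since $M(\gl)\cong L(m)\cong T(\gl)$, and since $L(m)$ is a finite-dimensional, hence completely reducible, $\fg$-module whose $\fg$-decomposition is determined by its $\fh$-character, it suffices to establish the two inequalities $\ch M(\gl)\le F_\gl\le\ch T(\gl)$ in the coefficientwise order on $\Z[P]$; the isomorphism forces both to be equalities, and $F_\gl$, being a nonnegative combination of irreducible characters, then reads off the decomposition.

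For the upper bound I would work from the defining relations \eqref{eq:relations_of_M}. By PBW one has $M(\gl)=U(\fn_-[t])v_M$, and the relations $(f_{\ga_1}\otimes t)v_M=(f_{\ga_2}\otimes t)v_M=(f_{\ga_1+\ga_2}\otimes t)v_M=0$ together with $\fn_+[t]v_M=0$ and $f_i^{\langle\ga_i^\vee,\gl\rangle+1}v_M=0$ severely restrict which monomials in the $f_\ga\otimes t^j$ can act nontrivially. Commuting these relations against the remaining raising and lowering operators, I expect to exhibit a spanning set of $\fg$-highest-weight vectors built by applying products of $f_\gb\otimes t$ for the three longer roots $\gb\in\{\ga_1+2\ga_2,\ \ga_1+3\ga_2,\ 2\ga_1+3\ga_2\}$, indexed exactly by the five exponents $(a_1,\ldots,a_5)$, with the four defining inequalities of $S_\gl$ emerging as the conditions guaranteeing that the resulting weight lies in $P_+$ and is not killed by the relations. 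This gives $\ch M(\gl)\le F_\gl$.

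For the lower bound I would compute $\ch T(\gl)$ from its realization as a generalized Demazure module. Here $\gL^1=k\gL_0$ and $\xi_1=w_0s_0(k\gL_0)\bmod\gd$, with $\gL^2$ the dominant weight of level $r$ in the orbit $\hW\xi_2$ (and an analogous third factor when $s\neq0$); choosing reduced expressions in $\hW$ for the $w_i$ with $\xi_i=w_i\gL^i$, the formal character of $T(\gl)$ is obtained by iterating the Demazure operators $\mathcal{D}_i$ attached to $s_0,s_1,s_2$. Specializing this character to $\fh$, that is discarding the $K$-, $d$- and $\gd$-dependence, and collecting lattice points, I would verify that every summand $V(\mu)$ with $\mu$ ranging over $S_\gl$ occurs with multiplicity at least one, yielding $F_\gl\le\ch T(\gl)$; the three cases $s=0,1,2$ are treated uniformly via the extra factor $-s\go_2+\gL_0$.

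The main obstacle I anticipate is the combinatorial bookkeeping on both sides. On the $M(\gl)$ side one must check that the commutators among the $f_\gb\otimes t^j$, with the cubic coefficients forced by the $G_2$ structure (for instance $\go_1=2\ga_1+3\ga_2$), collapse the spanning set to precisely $S_\gl$ with no overcounting. On the $T(\gl)$ side one must carry out the iterated Demazure-operator computation in affine type $G_2$ and match its $\fg$-restriction term by term with $F_\gl$. Reconciling these two independent descriptions so that the asymmetric constraints $a_1-a_3+a_5\le k$ and $2a_2+3a_4+3a_5\le l$ appear with the correct shifts is where the real work lies; once both bounds are in place the squeeze is immediate.
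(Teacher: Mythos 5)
In the paper this corollary is a one-line consequence: Theorem \ref{polyhedral multiplicity formula} gives the $\fg$-decomposition of the graded limit $L(m)$, and the multiplicity identity (\ref{eq:multi}), built into the classical/graded limit construction, transfers it verbatim to $L_q(m)$. Your proposal silently needs that same bridge: Theorem \ref{isomorphism of modules} relates only the classical modules $M(\gl)\cong L(m)\cong T(\gl)$, so even a complete character identity for $L(m)$ says nothing about the $U_q(\fg)$-structure of $L_q(m)$ until you invoke (\ref{eq:multi}); that omission is fixable in one line. The substantive problem is that your squeeze is really an attempt to reprove Theorem \ref{polyhedral multiplicity formula}, and both halves have genuine gaps. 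For the upper bound $\ch M(\gl)\le F_\gl$, the mechanism you anticipate is wrong: the inequalities defining $S_\gl$ do \emph{not} arise from dominance of the resulting weight. Dominance of $(k-a_1+a_3+a_4-a_5)\go_1+(l-a_2-3a_3-3a_4)\go_2$ gives only $a_1-a_3-a_4+a_5\le k$ and $a_2+3a_3+3a_4\le l$, strictly weaker than the four constraints of $S_\gl$ (note the coefficient $2a_2$, and the fourth constraint $2a_2+3a_4+3a_5\le l$ involving $a_5$); for instance $(0,a_2,0,0,0)$ with $l/2<a_2\le l$ yields a dominant weight yet lies outside $S_\gl$. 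Moreover a spanning set built from $f_\gb\otimes t$ for the three roots you list carries only three exponents; the five parameters require the degree-two currents $f_{\ga_1+3\ga_2}\otimes t^2$ and $f_{2\ga_1+3\ga_2}\otimes t^2$, as in the paper's monomials $\bf_{\ba}$. The actual spanning step (Lemma \ref{Lemma:span}) is a lexicographic induction modulo $U(\fg)L(m)_{>\gl-\ga}$ using the Heisenberg straightening lemma of \cite{MR2372556} and a Vandermonde-type invertibility argument; this is the hard content, not bookkeeping that commutator calculus "collapses".

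The lower bound $F_\gl\le\ch T(\gl)$ is in worse shape. For $p\ge 2$ the generalized Demazure module $D(\xi_1,\ldots,\xi_p)$ of (\ref{eq:module_D}) admits no plain iterated-Demazure-operator character formula over a single orbit; what exists is the interleaved formula of Lakshmibai--Littelmann--Magyar \cite{MR1887117}, of the shape $\mathcal{D}_{w_1}\big(e^{\gL^1}\mathcal{D}_{w_2}(e^{\gL^2}\cdots)\big)$ under compatibility of reduced words, and extracting from it the $\fg$-restriction uniformly in $k$ and $l$ is itself a theorem-sized task, not a verification. Worse, your stated check --- that each $V(\mu)$ with $\mu$ coming from $S_\gl$ occurs ``with multiplicity at least one'' --- is logically insufficient: the inequality is coefficientwise, and $F_\gl$ has repeated summands (e.g.\ $(1,0,1,0,0)$ and $(0,0,0,1,1)$ both contribute $k\go_1+(l-3)\go_2$ once $k\ge 1$ and $l\ge 6$), so you must bound each multiplicity below by the cardinality of $S_\gl[\ga]$. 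The paper obtains exactly this lower bound by a different device: induction on $k$ through the embedding $L(m)\hookrightarrow L_1\otimes L_2$ with carefully chosen projections (Lemma \ref{Lem:linearly_independence}), the base case $k=0$ being the Kirillov--Reshetikhin result of \cite{MR2372556}. As it stands, your upper bound rests on a mechanism that produces the wrong polytope, and your lower bound rests on an unproved character computation combined with a multiplicity-one check that does not imply the needed inequality.
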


\if0
\subsection{Polyhedral set for multiplicities}

For $\bm{a} = (a_1,a_2,a_3,a_4,a_5)\in \Z_+^5$, set
\[ \bf_{\ba}=(f_{2\ga_1+3\ga_2}t^2)^{(a_5)}(f_{\ga_1+3\ga_2}t^2)^{(a_4)}(f_{\ga_1+3\ga_2}t)^{(a_3)}
   (f_{\ga_1+2\ga_2}t)^{(a_2)}(f_{2\ga_1+3\ga_2}t)^{(a_1)}.
\]
Let $\wt(\ba) \in -Q_+$ denote the weight of $\bf_{\ba}$.

For $(k,l) \in \Z_+^2$, define a subset $S(k,l) \in \Z_+^{5}$ by 
\[ S(k,l)=\big\{ (a_1,\ldots,a_5)\bigm| a_1 \leq k, \ a_1 - a_3 + a_5 \leq k, \ 2a_2 + 3a_3 + 3a_4 \leq l,\ 2a_2+3a_4+3a_5 
   \leq l\big\}.
\]
For a $\fg$-module $M$ and $\mu \in P_+$, we define $\fg$-modules $M_{>\mu}$ and $M^{\mu}$ by
\[ M_{> \mu} = \sum_{\nu > \mu} U(\fg)M_{\nu}, \ \ \ \text{and} \ \ \ M^{\mu} = 
   M/M_{>\mu}.
\]
Denote by $\pr_{M,\mu}$ the canonical projection $M \to M^\mu$. 
We will omit the subscripts when they will not cause confusion.

\begin{Thm} \label{bases for weight spaces}
 Let $\gl = k\go_1 + l \go_2$ and $v \in L(\gl)$ be a highest weight vector.
 Then for every $\ga \in Q_+$, $\{\pr(\bf_{\ba}v) \mid \ba \in S(k,l), \ \wt(\ba) = -\ga\}$ is a basis of 
  $\big(L(\gl)^{\gl-\ga}\big)_{\gl-\ga}$.
\end{Thm}

\begin{Cor} \label{polyhedral multiplicity formula}
 As a $\fg$-module, 
 \[ L(\gl) \cong \bigoplus_{\ba \in S(k,l)} V\big((k -a_1 + a_3 + a_4 -a_5)\go_1+(l-a_2 -3a_3-3a_4)\go_2\big).
 \]
\end{Cor}

\begin{Rem}\normalfont
 Set $S= S(k,l)$.
 We have $S=S_1 \sqcup S_2$ with
 \begin{align*}
  S_1 &= \{ (a_1,\ldots,a_5)\in \Z_+\mid a_1 \leq k, \  a_3 < a_5, \ a_1 -a_3+a_5 \leq k, \ 
   2a_2 +3a_4 +3a_5\leq l\big\},\\
  S_2 &= \{ (a_1,\ldots,a_5)\in \Z_+\mid a_1 \leq k, \  a_3 \geq a_5, \ 2a_2 + 3a_3 + 3a_4 \leq l\big\}.
 \end{align*}
The map $\Psi\colon \Z^5 \to \Z^5$ defined by 
\[ \Phi(a_1,a_2,a_3,a_4,a_5) = (a_2,k-a_1,a_3+a_4,-a_3+a_5,a_3)
\]
induces an isomorphism 
\[ S_1 \stackrel{\sim}{\to} A_1 = \{(r_1,\ldots,r_5) \in \Z_+ \mid r_5\leq r_3, \ 2r_1 + 3r_3 + 3r_4 \leq l, \ 
    0<r_4\leq r_2 \leq k\}
\]
and we have
\[ (k -a_1 + a_3 + a_4 -a_5)\go_1+(l-a_2 -3a_3-3a_4)\go_2 = (r_2 + r_3-r_4-r_5)\go_1 + (l-r_1-3r_3)\go_2
\]
where $(r_1,\ldots,r_5) = \Phi(a_1,\ldots,a_5)$.
Moreover, the map $\Psi\colon \Z^5 \to \Z^5$ defined by
\[ \Psi(a_1,a_2,a_3,a_4,a_5) = (a_2,k-a_1,a_4,a_3,a_5)
\]
induces an isomorphism
\[ S_2 \stackrel{\sim}{\to} A_2 = \{(r_1,\ldots,r_5) \in \Z_+ \mid r_5\leq r_4, \ 2r_1 + 3r_3 + 3r_4 \leq l, \ 
    r_2 \leq k\}
\]
and we have
\[ (k -a_1 + a_3 + a_4 -a_5)\go_1+(l-a_2 -3a_3-3a_4)\go_2 = (r_2 + r_3+r_4-r_5)\go_1 + (l-r_1-3r_3-3r_4)\go_2
\]
 where $(r_1,\ldots,r_5) = \Psi(a_1,\ldots,a_5)$.
\end{Rem}
\fi

From Theorem 3.2, we also obtain the following formula for the limit of the (normalized) characters of minimal affinizations.

\begin{Cor}\label{Conjecture of the character formula of least affinizations of Verma modules by Mukhin and Young}
Let $J$ be a subset of $I$, and suppose that $\lambda_1, \lambda_2, \ldots$ is an infinite sequence of elements of 
$P_+$ such that 
\[ \lim_{n \to \infty} \langle \alpha_i^{\vee}, \lambda_n \rangle = \infty \text{ for all } i \in J \text{ and } 
   \langle\alpha_i^{\vee}, \lambda_n \rangle = 0 \text{ for all } i \notin J, \ n \in \mathbb{Z}_{>0}.
\] 
Let $m_1, m_2, \ldots$ be an infinite sequence of elements of $\mathcal{P}_+$ such that $L_q(m_n)$ is a minimal affinization of $V_q(\lambda_n)$. Then $\lim_{n \to \infty} e^{-\lambda_n} \ch L_q (m_n)$ exists, and
\begin{align*}
\lim_{n \to \infty} e^{-\lambda_n} \ch L_q (m_n) = \prod_{\alpha \in \Delta_+} \left( \frac{1}{1-e^{-\alpha}} 
\right)^{\max_{j \in J} \langle \omega_j^{\vee}, \alpha \rangle}.
\end{align*}
\end{Cor}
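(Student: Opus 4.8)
The plan is to deduce the formula from the multiplicity description in Theorem \ref{polyhedral multiplicity formula} by letting $n\to\infty$. The case $J=\emptyset$ is trivial (then $\lambda_n=0$ and both sides equal $1$), so assume $J\neq\emptyset$. By (\ref{eq:multi}) we have $e^{-\lambda_n}\ch L_q(m_n)=e^{-\lambda_n}\ch L(m_n)$, so it suffices to treat the graded limits. Write $\lambda_n=k_n\go_1+l_n\go_2$ and $\mu_{\mathbf a}=(k_n-a_1+a_3+a_4-a_5)\go_1+(l_n-a_2-3a_3-3a_4)\go_2$. Then Theorem \ref{polyhedral multiplicity formula} gives
\[ e^{-\lambda_n}\ch L(m_n)=\sum_{\mathbf a\in S_{\lambda_n}}e^{-\zeta_{\mathbf a}}\big(e^{-\mu_{\mathbf a}}\ch V(\mu_{\mathbf a})\big), \]
where $\zeta_{\mathbf a}=\lambda_n-\mu_{\mathbf a}=(a_1-a_3-a_4+a_5)\go_1+(a_2+3a_3+3a_4)\go_2\in Q_+$ is independent of $n$.

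The next step is a stabilization property for normalized irreducible characters. Put $J^c=I\setminus J$ and let $\Delta_{J^c,+}=\Delta_+\cap\sum_{j\in J^c}\Z\alpha_j$. I claim that if $\mu_{J^c}\in\sum_{j\in J^c}\Z_+\go_j$ is fixed and $\mu^{(n)}=\mu_{J^c}+\sum_{i\in J}c_i^{(n)}\go_i$ with $c_i^{(n)}\to\infty$, then coefficientwise
\[ \lim_{n\to\infty}e^{-\mu^{(n)}}\ch V(\mu^{(n)})=\frac{e^{-\mu_{J^c}}\ch V_{J^c}(\mu_{J^c})}{\prod_{\alpha\in\Delta_+\setminus\Delta_{J^c,+}}(1-e^{-\alpha})}, \]
where $V_{J^c}(\mu_{J^c})$ is the irreducible module with highest weight $\mu_{J^c}$ for the Levi subalgebra attached to $J^c$. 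To prove this I would write $e^{-\mu^{(n)}}\ch V(\mu^{(n)})=\sum_{w\in W}(-1)^{\ell(w)}e^{(w-1)(\mu^{(n)}+\rho)}\big/\prod_{\alpha\in\Delta_+}(1-e^{-\alpha})$ via the Weyl character formula. A summand survives the (coefficientwise) limit exactly when $(w-1)\mu^{(n)}$ stays bounded, i.e.\ when $w\go_i=\go_i$ for all $i\in J$, which holds precisely for $w\in W_{J^c}$; for such $w$ one has $(w-1)\mu^{(n)}=(w-1)\mu_{J^c}$, and the Weyl character and denominator formulas for the Levi collapse $\sum_{w\in W_{J^c}}(-1)^{\ell(w)}e^{(w-1)(\mu_{J^c}+\rho)}$ to $e^{-\mu_{J^c}}\ch V_{J^c}(\mu_{J^c})\prod_{\alpha\in\Delta_{J^c,+}}(1-e^{-\alpha})$.

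Finally I would interchange the limit with the summation over $\mathbf a$ and evaluate. For fixed $\nu\in Q_+$, only the finitely many $\mathbf a$ with $\zeta_{\mathbf a}\le\nu$ contribute to the coefficient of $e^{-\nu}$ in $e^{-\lambda_n}\ch L(m_n)$; for each such $\mathbf a$ the defining inequalities of $S_{\lambda_n}$ indexed by $i\in J$ become vacuous as $n\to\infty$, while those indexed by $i\in J^c$ freeze the relevant coordinates (since $\langle\alpha_i^\vee,\lambda_n\rangle=0$) and are $n$-independent, so $S_{\lambda_n}$ stabilizes to a limiting polyhedral set $S_\infty$; this also shows the limit exists. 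At the same time $\mu_{\mathbf a}$ grows to infinity exactly along the $J$-directions with a fixed $J^c$-part, so the stabilization property applies with $\mu_{J^c}$ equal to the $J^c$-component of $\mu_{\mathbf a}$. Combining the two displays writes $\lim_n e^{-\lambda_n}\ch L(m_n)$ as $\prod_{\alpha\in\Delta_+\setminus\Delta_{J^c,+}}(1-e^{-\alpha})^{-1}$ times an explicit generating series summed over $S_\infty$ and weighted by the Levi characters $e^{-\mu_{J^c}}\ch V_{J^c}(\mu_{J^c})$. The main obstacle is this final summation: one must check that, after the cancellations specific to $G_2$, the series collapses to exactly $\prod_{\alpha\in\Delta_+}(1-e^{-\alpha})^{-\max_{j\in J}\langle\go_j^\vee,\alpha\rangle}$. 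This is most delicate for $J=\{2\}$, where the persistent long-root component $(a_3+a_4-a_5)\go_1$ of $\mu_{\mathbf a}$ and the accompanying $\mathfrak{sl}_2$-characters interact with the constraint $a_5\le a_3$; the required identity there rests on a factorization of the form $(1-e^{-(2\alpha_1+3\alpha_2)})^2-e^{-\alpha_1}(1-e^{-(\alpha_1+3\alpha_2)})^2=(1-e^{-\alpha_1})(1-e^{-(3\alpha_1+6\alpha_2)})$. The cases $J=\{1\}$, $J=\{2\}$ and $J=\{1,2\}$ are verified separately.
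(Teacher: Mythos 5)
Your proposal is correct, but it takes a genuinely different route from the paper's. The paper disposes of this corollary in one sentence: it invokes Theorem \ref{isomorphism of modules} and repeats the argument of \cite[Corollary 4.13]{MR3120578}, i.e.\ it works with the modules themselves --- the realization $L(m_n)\cong M(\lambda_n)\cong T(\lambda_n)$ makes the normalized characters stabilize for module-theoretic reasons, the exponent $\max_{j\in J}\langle\go_j^\vee,\ga\rangle$ emerging from the defining relations (cf.\ Proposition \ref{Prop:annihilation}) as a count of surviving PBW generators $f_\ga\otimes t^p$. You instead start from the polyhedral decomposition of Theorem \ref{polyhedral multiplicity formula}, combine it with the standard parabolic stabilization of normalized Weyl characters (your limit lemma is right, including the identification of the surviving Weyl group elements with the stabilizer $W_{J^c}$), and reduce to a generating-series evaluation over the limiting set $S_\infty$; your interchange of limit and sum is sound, since each $a_i$ strictly increases $\zeta_{\ba}$ in both simple-root coordinates, so only finitely many $\ba$ touch any fixed weight. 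The step you flag as the ``main obstacle'' does go through, and exactly via your identity: for $J=\{2\}$ the hypothesis forces $k_n=0$, so $S_\infty=\{a_1=0,\ a_5\le a_3\}$; writing $x=e^{-\ga_1}$, $u=e^{-(\ga_1+2\ga_2)}$, $v=e^{-(\ga_1+3\ga_2)}$, $w=e^{-(2\ga_1+3\ga_2)}=xv$, the $a_2$-sum contributes $(1-u)^{-1}$ and
\[
\sum_{\substack{a_3\ge a_5\ge 0,\ a_4\ge 0}} v^{a_3+a_4}w^{a_5}\,\frac{1-x^{a_3+a_4-a_5+1}}{1-x}
=\frac{1}{(1-vw)(1-x)}\left(\frac{1}{(1-v)^2}-\frac{x}{(1-xv)^2}\right)
=\frac{1}{(1-v)^2(1-w)^2},
\]
where the last equality is precisely your factorization $(1-xv)^2-x(1-v)^2=(1-x)(1-xv^2)$ (note $e^{-(3\ga_1+6\ga_2)}=xv^2=vw$); together with the prefactor $\prod_{\ga\in\gD_+\setminus\{\ga_1\}}(1-e^{-\ga})^{-1}$ this gives the exponents $(0,1,1,2,3,3)$, as required. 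The cases $J=\{1\}$ (where the inequalities force $a_2=a_3=a_4=a_5=0$ and the sum is $(1-w)^{-1}$) and $J=I$ (where $S_\infty=\Z_+^5$ and the sum is the free product $(1-u)^{-1}(1-v)^{-2}(1-w)^{-2}$) are immediate. The trade-off: your argument consumes Theorem \ref{polyhedral multiplicity formula} --- the deepest result of the paper, all of Section \ref{Section:Polyhedral} --- and is tied to the explicit $G_2$ inequalities, whereas the paper's argument needs only Theorem \ref{isomorphism of modules} and transfers uniformly to the types treated in \cite{MR3120578}; in exchange, yours is self-contained at the level of characters and exhibits the product formula as a concrete polyhedral generating-function identity.
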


\begin{proof}
This result follows from Theorem \ref{isomorphism of modules}, and the proof is the same as the proof of 
\cite[Corollary 4.13]{MR3120578}.
\end{proof}

This corollary, together with \cite[Corollary 5.6]{MY14}, yields the character formula of the least affinizations of 
generic parabolic Verma modules of type $G_2$ conjectured by Mukhin and Young \cite[Conjecture 6.3]{MY14}.

\section{Proof of Theorem \ref{isomorphism of modules}}\label{Section:isom}

Throughout the rest of this paper, we fix $\gl = k\go_1 + l\go_2 \in P_+$ and set $r\in \Z_+$ and
$s \in \{0,1,2\}$ to be such that $l = 3r+s$.
Let $m$ be one of the monomials in (\ref{eq:l-highest-wt}).
In this section, we shall prove one by one the existence of three surjective homomorphisms
\[ M(\gl) \twoheadrightarrow L(m), \ \ \ L(m) \twoheadrightarrow T(\gl), \ \ \ T(\gl) \twoheadrightarrow M(\gl),
\]
which completes the proof of Theorem \ref{isomorphism of modules}.

\subsection{Proof of {\boldmath $M(\gl) \twoheadrightarrow L(m)$}}\label{subsection:MtoL}

Let $v_m$ be a highest $\ell$-weight vector of $L_q(m)$,
and $W = U_q(\fg)v_m \subseteq L_q(m)$ the simple $U_q(\fg)$-submodule generated by $v_m$.
It follows from \cite[Proposition 5.5]{MR1367675} that $\bigoplus_{\mu \geq \gl-\ga_1-\ga_2} L_q(m)_{\mu} \subseteq W$, 
where $L_q(m)_\mu$ denotes the weight space with weight $\mu$.
Hence we have
\[ x_{\ga_1,1}^-v_m \in W,\ \ x_{\ga_2,1}^-v_m \in W, \ \ [x_{\ga_1,1}^-,x_{\ga_2,0}^-]v_m \in W.
\]
Then it is proved from the definition of the graded limit that the vector 
$\bar{v}_m = v_m\otimes_{\mathbf{A}} 1 \in L(m)$ satisfies
\[ (f_{\ga_1}\otimes t)\bar{v}_m= (f_{\ga_2}\otimes t)\bar{v}_m = (f_{\ga_1+\ga_2}\otimes t)\bar{v}_m=0
\]
(see \cite[Subsection 4.1]{MR3120578}).
The other relations in (\ref{eq:relations_of_M}) are easily checked from the construction.
Hence $M(\gl) \twoheadrightarrow L(m)$ follows.

\subsection{Proof of {\boldmath $L(m) \twoheadrightarrow T(\gl)$}}

Here we only consider the case where the monomial $m$ is of the form $\prod_{i=0}^{k-1} Y_{1, aq^{6i}}\cdot 
\prod_{i=0}^{l-1} Y_{2, aq^{6k+2i+1}}$. The proof of the other case is similar.

Set 
\[ m_1 = \prod_{i=0}^{k-1} Y_{1, aq^{6i}}, \ \ \ m_2 = \prod_{i=0}^{l-1} Y_{2, aq^{6k+2i+1}}.
\]
By \cite[Theorem 5.1]{MR1883181} (or more precisely, the dualized statement of it), there exists an injective 
homomorphism
\[ L_q(m) \hookrightarrow L_q(m_1) \otimes L_q(m_2)
\]
mapping a highest $\ell$-weight vector to the tensor product of highest $\ell$-weight vectors.
Then by the definition of graded limits, we obtain a $\fg[t]$-module homomorphism
\begin{equation*}\label{eq:map}
 L(m) \to L(m_1) \otimes L(m_2)
\end{equation*}
mapping a highest weight vector to the tensor product of highest weight vectors.
Now the existence of a surjection $L(m) \twoheadrightarrow T(\gl)$ is proved from 
the following lemma.

\begin{Lem}
 {\normalfont(i)} $L(m_1)$ is isomorphic to $D\big(k(-\go_1+\gL_0)\big)$ as a $\fg[t]$-module.\\
 {\normalfont(ii)} $L(m_2)$ is isomorphic to $D\big(r(-3\go_2+\gL_0)\big)$ {\normalfont(}resp.\ $D\big(r(-3\go_2+\gL_0),
  -s\go_2 + \gL_0\big)${\normalfont)} if $s= 0$ {\normalfont(}resp.\ $s=1,2${\normalfont)} as a $\fg[t]$-module.
\end{Lem}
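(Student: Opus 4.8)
The plan is to identify each graded limit $L(m_i)$ with an explicit generalized Demazure module by exhibiting the right defining relations on the highest weight vector and then matching dimensions or characters. For part (i), the monomial $m_1 = \prod_{i=0}^{k-1} Y_{1,aq^{6i}}$ is precisely the $\ell$-highest weight of a Kirillov--Reshetikhin module attached to the long node, with highest weight $k\go_1$. Kirillov--Reshetikhin modules are the best-understood minimal affinizations, and their graded limits are known to be (single) Demazure modules by the work cited in the introduction. So for (i) I would first record that $L(m_1)$ is generated by a highest weight vector $v_1$ of weight $k\go_1$ satisfying the current-algebra relations $\fn_+[t]v_1 = 0$, $(h\otimes t^j)v_1 = \gd_{j0}\langle h,k\go_1\rangle v_1$, together with $f_i^{\langle\ga_i^\vee,k\go_1\rangle+1}v_1 = 0$ and appropriate level-one relations coming from the $\ell$-weight. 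On the other side, $D\big(k(-\go_1+\gL_0)\big)$ is a Demazure submodule of the level-$k$ integrable highest weight module $\hV\big(k\gL_0\big)$, whose extremal vector $v_{k(-\go_1+\gL_0)}$ has the same weight $k\go_1$ (after the $\gd$-shift identification from the Remark) and is annihilated by the same operators, $-\go_1+\gL_0$ being $W$-conjugate to $\gL_0$ through the long node. The standard strategy is to produce a surjection $L(m_1)\twoheadrightarrow D\big(k(-\go_1+\gL_0)\big)$ from the Demazure defining relations, and then to check that both modules have the same $\fg$-character—for Kirillov--Reshetikhin modules this is classical—forcing the surjection to be an isomorphism.

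For part (ii), the monomial $m_2 = \prod_{i=0}^{l-1} Y_{2,aq^{6k+2i+1}}$ is a Kirillov--Reshetikhin $\ell$-highest weight for the short node with underlying $\fg$-highest weight $l\go_2$. The subtlety is that the short root has squared length $2$ while the long root has squared length $6$, so the ``level'' in the affine picture is governed by $\langle K, \cdot\rangle$ in a way that sees the factor of $3$; this is exactly why $l$ gets written as $3r+s$ and why the answer is a single Demazure module $D\big(r(-3\go_2+\gL_0)\big)$ when $s=0$ but a genuine \emph{generalized} (two-factor) Demazure module $D\big(r(-3\go_2+\gL_0),\,-s\go_2+\gL_0\big)$ when $s\in\{1,2\}$. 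My plan is to first treat the divisible case $s=0$ by the same template as (i): write down the current-algebra relations on the highest weight vector of $L(m_2)$ of weight $l\go_2=3r\go_2$, match them against the Demazure relations for $D\big(r(-3\go_2+\gL_0)\big)$ inside $\hV\big(r\cdot(\text{level})\gL_0\big)$, and then compare characters. For the general case I would split off the extra short-node contribution: the extremal vector $v_{-s\go_2+\gL_0}$ in the second tensor factor carries exactly the residual weight $s\go_2$, and the generalized Demazure module is the $U(\hfb)$-span of the tensor product $v_{r(-3\go_2+\gL_0)}\otimes v_{-s\go_2+\gL_0}$, whose top weight is $3r\go_2 + s\go_2 = l\go_2$ as required.

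The two ingredients I expect to need throughout are, first, the defining relations for generalized Demazure modules in terms of annihilation by powers of the $f_\ga\otimes t^j$ (Joseph-type / fusion-product relations specialized to $\fg[t]$), which give the surjection from $L(m_i)$ onto the Demazure side once one checks the highest weight vector of $L(m_i)$ satisfies them; and second, a character or dimension equality to upgrade that surjection to an isomorphism. Producing the surjection is the routine half, since the relations (\ref{eq:relations_of_M})-type relations satisfied by graded limits of Kirillov--Reshetikhin modules are already available from the references cited in the introduction.

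The hard part will be the short-node case with $s\neq 0$, where the target is an honest generalized Demazure module rather than a single Demazure module. The difficulty is twofold: one must correctly compute which affine extremal weights arise—this is where the length-$3$ asymmetry of $G_2$ intervenes and where one must verify that $-3\go_2+\gL_0$ and $-s\go_2+\gL_0$ are indeed in affine Weyl orbits of dominant integral weights of the appropriate levels—and one must show that the $U(\hfb)$-span of the tensor product of extremal vectors has exactly the same character as $L(m_2)$, with no collapse or extra vectors coming from the tensor product structure. Establishing this character equality (equivalently, that the surjection $L(m_2)\twoheadrightarrow D(\cdots)$ is injective) is where I expect the genuine work to lie, and I would approach it by combining the known $\fg$-character of the short-node Kirillov--Reshetikhin module with a tensor-product / Demazure-flag argument for the right-hand side, of the sort used in the proof of the analogous statement in \cite{MR3120578}.
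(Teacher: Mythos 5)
Your outline is essentially the paper's proof and is correct in substance: identify $L(m_i)$ with the abstractly presented Kirillov--Reshetikhin module via a surjection plus a character equality, invoke Fourier--Littelmann to convert KR modules into Demazure modules, and for the short node write $l=3r+s$ and pass to a tensor product. Two points of comparison. First, a small logical slip in (i): you propose to ``produce a surjection $L(m_1)\twoheadrightarrow D\big(k(-\go_1+\gL_0)\big)$ from the Demazure defining relations,'' but the arrow has to go the other way, since a map \emph{out of} $L(m_1)$ requires a presentation of $L(m_1)$, which is not available a priori (it is part of what Theorem \ref{isomorphism of modules} ultimately establishes). The paper instead uses the relations checked in Subsection \ref{subsection:MtoL} to get a surjection $KR(k\go_1)\twoheadrightarrow L(m_1)$ from the relations-presented KR module of \cite{MR2238884,MR2372556}, and the known character equality for KR modules (fermionic formula; \cite{MR1745263,MR2254805,MR2372556}) forces this to be an isomorphism; $KR(k\go_1)\cong D\big(k(-\go_1+\gL_0)\big)$ is then exactly \cite[Theorem 4]{MR2323538}. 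With the character equality in hand your net conclusion is the same, so this is reparable rather than fatal. Second, for $s\neq 0$ the paper does not run the Demazure-flag character computation you anticipate as the hard part: it quotes \cite[Corollary 2.3]{MR2372556}, which says $KR(l\go_2)$ is the cyclic $\fg[t]$-submodule of $KR(3r\go_2)\otimes KR(s\go_2)$ generated by the tensor product of highest weight vectors, and then identifies the two factors separately --- $KR(3r\go_2)\cong D\big(r(-3\go_2+\gL_0)\big)$ again by \cite[Theorem 4]{MR2323538} (this is precisely where the divisibility by $3$ at the short node, which you correctly flagged, enters), while the leftover piece $KR(s\go_2)\cong D(-s\go_2+\gL_0)$ for $s=1,2$ is not covered by that theorem and is checked directly by the Demazure character formula. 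So the genuine work you located in the $s\neq 0$ case is, in the paper, outsourced to Chari--Moura; your plan to re-derive the tensor-product character equality via Demazure flags in the style of \cite{MR3120578} would duplicate that input but is not wrong.
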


\begin{proof}
 The graded limit $L(m_1)$ is isomorphic to the Kirillov-Reshetikhin module $KR(k\go_1)$ for $\fg[t]$ defined in 
 \cite{MR2238884,MR2372556}, which is proved from the facts that there exists a surjection $KR(k\go_1) \twoheadrightarrow 
 L(m_1)$ (see Subsection \ref{subsection:MtoL}) and the characters of two modules are the same 
 (see \cite{MR1745263,MR2254805,MR2372556}). Hence the assertion (i) follows from \cite[Theorem 4]{MR2323538}.
 Similarly $L(m_2)$ is isomorphic to $KR(l\go_2)$, and hence by 
 \cite[Corollary 2.3]{MR2372556} it is isomorphic to the $\fg[t]$-submodule of
 $KR(3r\go_2) \otimes KR(s\go_2)$ generated by the tensor product of highest weight vectors.
 Now $KR(3r\go_2) \cong D\big(r(-3\go_2+\gL_0)\big)$ follows from \cite[Theorem 4]{MR2323538}, and 
 $KR(s\go_2) \cong D(-s\go_2+\gL_0)$ is verified by the Demazure character formula (see \cite{MR2323538}).
 Hence the assertion (ii) is proved.
\end{proof}

\subsection{Proof of {\boldmath$T(\gl) \twoheadrightarrow M(\gl)$}}

First we introduce the following notation, as in \cite{MR3120578,MR3210588}.
Assume that $V$ is a $\hfg$-module and $D$ is a $\hfb$-submodule of $V$.
For $i \in \hI$ let $\hfp_i$ denote the parabolic subalgebra $\hfb \oplus \C f_i \subseteq \hfg$,
and set $F_iD= U(\hfp_i)D \subseteq V$ to be the $\hfp_i$-submodule generated by $D$.
It is easily seen that, if $\xi_1,\ldots,\xi_p \in \hW(\hP_+)$ satisfy $\langle \ga_i^{\vee}, \xi_j\rangle \ge 0$ for all 
$1\le j \le p$, then
\begin{equation}\label{eq:recursive}
 F_iD(\xi_1,\ldots,\xi_p) = D(s_i\xi_1,\ldots,s_i\xi_p)
\end{equation}
(see \cite[Lemma 2.4]{MR3120578}).

Let $\hgDre = \gD + \Z\gd$ be the set of real roots of $\hfg$,
and $\hgDre_+ = \gD_+ \sqcup (\gD + \Z_{> 0}\gd)$ the set of positive real roots.
For $\ggg = \ga+p\gd \in \hgDre$, set
\[ \ggg^{\vee} = \ga^{\vee} + \frac{6p}{(\ga,\ga)}K,
\]
and define a number $\rho(\ggg)$ by
\[ \rho(\ggg)=\max\!\big\{0, -\langle \ggg^{\vee}, k(\go_1+\gL_0)\rangle\big\} + \max\!\big\{0, -\langle \ggg^{\vee}, 
   r(3\go_2+\gL_0)\rangle\big\} + \max\!\big\{0,-\langle \ggg^{\vee},s\go_2+\gL_0\rangle\big\}.
\]
The explicit values of $\rho(\ggg)$ for $\ggg \in \hgDre_+$ are given as follows:
\begin{align*}
 \rho\big(-(\ga_1+2\ga_2)+ \gd\big) &= 3r+\gd_{s2}, \\
 \rho\big(-(\ga_1+3\ga_2)+\gd\big) &= 2r + \gd_{s2},\\
 \rho\big(-(2\ga_1+3\ga_2)+\gd\big) &=k+2r + \gd_{s2},\\
 \rho\big(-(\ga_1+3\ga_2)+2\gd\big) &=\rho\big(-(2\ga_1+3\ga_2)+2\gd)=r,
\end{align*}
and $\rho(\ggg) = 0$ for all the other $\ggg \in \hgDre_+$.
Here $\gd_{s2}$ denotes the Kronecker's delta.
For $\ga + p\gd \in \hgDre$ set $x_{\ga + p\gd} = e_\ga \otimes t^p$.

Recall that $v_\xi$ denotes an extremal weight vector in $\hV(\gL)$ with weight $\xi$, where
$\gL\in \hP_+$ is the element satisfying $\xi \in \hW\gL$. 
Let $v_T \in T(\gl)$ be the tensor product of the extremal weight vectors:
\[ v_T = \begin{cases}
          v_{k(\go_1+\gL_0)} \otimes v_{r(3\go_2 + \gL_0)} & s=0, \\
          v_{k(\go_1+\gL_0)} \otimes v_{r(3\go_2 + \gL_0)} \otimes v_{s\go_2+\gL_0} & s=1,2.
         \end{cases}
\]
Note that $T(\gl)$ is generated by $v_T$ as a $\fg[t]$-module.
Throughout the rest of this paper, we will abbreviate $X\otimes t^p$ as $X t^p$ to shorten the notation.

\begin{Lem}\label{Lem:Ann}
 We have
 \[ \mathrm{Ann}_{U(\hfn_+)}(v_T) = U(\hfn_+)\Big(\bigoplus_{\ggg \in \hat{\gD}_+^{\mathrm{re}}} \C x_\ggg^{\rho(\ggg)+1} + 
    \C f_{\ga_1+3\ga_2}t^2(f_{\ga_1+2\ga_2}t)^{3r-2}+ t\fh[t]\Big),
 \]
 where $\C f_{\ga_1+3\ga_2}t^2(f_{\ga_1+2\ga_2}t)^{3r-2}$ is omitted if $r=0$.
\end{Lem}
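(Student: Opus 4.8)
The plan is to compute the annihilator of $v_T$ in $U(\hfn_+)$ by exploiting the tensor product structure of $v_T$ together with the recursive description of generalized Demazure modules. Recall $v_T$ is a tensor product of extremal weight vectors $v_{\xi_i}$ in simple highest weight $\hfg$-modules $\hV(\gL^i)$. The key conceptual point is that each $\xi_i$ is a \emph{dominant} weight (it lies in $\hP_+$, being a nonnegative multiple of $\go_j + \gL_0$ or $s\go_2 + \gL_0$), so each $v_{\xi_i}$ is in fact a highest weight vector for $\hV(\gL^i)$, i.e. $v_{\xi_i} = v_{\gL^i}$. Consequently $\hfn_+$ kills each tensor factor, and the whole of $\hfn_+ = \fn_+ \oplus t\fg[t]$ annihilates $v_T$. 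The nontrivial content of the lemma is therefore not which weight vectors are killed, but rather identifying a generating set for the annihilator ideal as a left $U(\hfn_+)$-module.

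First I would observe that by the standard theory of integrable highest weight modules, $\mathrm{Ann}_{U(\hfn_+)}(v_{\gL^i})$ is generated by the Serre-type elements $x_\ggg^{\langle \ggg^\vee, \gL^i\rangle + 1}$ for $\ggg \in \hgDre_+$, together with the Cartan relations $t\fh[t]$. For the tensor product, the annihilator of $v_T = v_{\gL^1}\otimes\cdots\otimes v_{\gL^p}$ is governed by the \emph{sum} of the individual weights, which is exactly how $\rho(\ggg)$ is defined: $\rho(\ggg)$ is the sum over the factors of $\max\{0, -\langle\ggg^\vee, \xi_i\rangle\}$. The point is that for a single factor $x_\ggg^N v_{\gL^i} = 0$ precisely when $N > \langle\ggg^\vee,\gL^i\rangle$, and applying a power of $x_\ggg$ to a tensor product via the coproduct distributes across factors, so the total power needed to kill $v_T$ is $\rho(\ggg)+1$. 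I would make this precise using the recursive relation (\ref{eq:recursive}) for the modules $D(\xi_1,\ldots,\xi_p)$ under the $F_i$ operators, which translates statements about the Demazure module into statements about weights and the $\rho$-function.

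The main obstacle, and where the real work lies, is the extra relation $f_{\ga_1+3\ga_2}t^2(f_{\ga_1+2\ga_2}t)^{3r-2}$ (present when $r \ge 1$). This is not one of the single-root Serre relations $x_\ggg^{\rho(\ggg)+1}$; it is a genuinely new quadratic-type relation mixing two distinct root vectors, and it reflects a nonobvious vanishing special to type $G_2$ and to the specific weights appearing in the $\go_2$-factor. To handle it I would verify directly that this element annihilates $v_{r(3\go_2+\gL_0)}$ (equivalently the corresponding tensor factor of $v_T$), by an explicit $\mathfrak{sl}_2$- or $\mathfrak{sl}_3$-type computation inside the relevant $\hV(\gL^i)$, using the explicit $\rho$-values listed just above the lemma: namely $\rho(-(\ga_1+3\ga_2)+2\gd) = r$ and $\rho(-(\ga_1+2\ga_2)+\gd) = 3r + \gd_{s2}$. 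Then I must prove that these generators, together with the $x_\ggg^{\rho(\ggg)+1}$ and $t\fh[t]$, actually generate the \emph{entire} annihilator, not merely a subideal.

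I expect the hardest part to be this last containment—showing the listed elements suffice. The natural strategy is a dimension/character count: the quotient $U(\hfn_+)/\mathrm{Ann}_{U(\hfn_+)}(v_T)$ is isomorphic as a vector space to $U(\hfn_-)v_T = T(\gl)$, whose graded character is computable via the Demazure character formula (already invoked in the preceding proof). I would build the candidate quotient by the explicit generators, bound its character from above using a PBW-type argument that orders the $x_\ggg$ and reduces high powers via the relations, and then match this upper bound against the known character of $T(\gl)$. Equality of characters forces the annihilator to be exactly the stated ideal. The type-$G_2$ combinatorics of reducing PBW monomials modulo the extra relation $f_{\ga_1+3\ga_2}t^2(f_{\ga_1+2\ga_2}t)^{3r-2}$ is where the calculation becomes delicate, and I would lean on the recursion (\ref{eq:recursive}) to organize it factor by factor rather than attacking the whole ideal at once.
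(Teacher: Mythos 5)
Your proposal founders on its very first ``key conceptual point,'' which is false: the weights $k(\go_1+\gL_0)$, $r(3\go_2+\gL_0)$ do \emph{not} lie in $\hP_+$. You checked dominance only against the finite simple roots $\ga_1,\ga_2$ and forgot $\ga_0$. With $\theta=2\ga_1+3\ga_2$ one has $\langle\theta^\vee,\go_1\rangle=2$ and $\langle\theta^\vee,\go_2\rangle=1$, so $\langle \ga_0^\vee, k(\go_1+\gL_0)\rangle=-k$ and $\langle \ga_0^\vee, r(3\go_2+\gL_0)\rangle=-2r$. Hence for $k,r>0$ the tensor factors of $v_T$ are extremal but \emph{not} highest weight vectors, and elements of $\hfn_+$ such as $x_{-(\ga_1+2\ga_2)+\gd}=f_{\ga_1+2\ga_2}t$ do not kill $v_T$ --- which is precisely why the values $\rho(\ggg)$ listed before the lemma are nonzero. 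If your premise ``$\hfn_+ v_T=0$'' were true, $\mathrm{Ann}_{U(\hfn_+)}(v_T)$ would be the augmentation ideal and the lemma would be false; your own later discussion of the powers $x_\ggg^{\rho(\ggg)+1}$ contradicts this premise. A second error infects your fallback strategy: $U(\hfn_+)/\mathrm{Ann}_{U(\hfn_+)}(v_T)$ is isomorphic to $U(\hfn_+)v_T=U(\hfb)v_T$, which for $s=0$ is $D\big(k(\go_1+\gL_0),r(3\go_2+\gL_0)\big)$, a \emph{proper} $\hfb$-submodule of $T(\gl)$ (e.g.\ $f_1v_T\notin U(\hfb)v_T$ by weight and degree reasons), not ``$U(\hfn_-)v_T=T(\gl)$.'' So the character you propose to match against is the wrong one, and computing the right one for these generalized Demazure modules is itself nontrivial --- it rests on standard monomial theory for Bott--Samelson varieties, which is exactly what the paper invokes \cite[Theorem 5]{MR1887117} for.

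The paper's actual route avoids any character count and explains where the exotic generator really comes from. For $s=0$ it uses (\ref{eq:recursive}) to write $U(\hfb)v_T\cong F_0D(k\gL_0,\gL)$ with $\gL=r(-2\go_1+3\go_2+\gL_0)$; since $\hfn_+$ does kill $v_{k\gL_0}$, everything reduces to $\mathrm{Ann}_{U(\hfn_+)}(v_\gL)$ for a \emph{single} extremal vector, which Mathieu's lemma \cite[Lemma 26]{MR932325} presents by the powers $x_\ggg^{\max\{0,-\gL(\ggg^\vee)\}+1}$ together with $t\fh[t]$. The extra relation is then forced by a structural requirement your sketch does not anticipate: to transport the annihilator through $F_0$ via \cite[Lemma 5.3]{MR3120578} one needs a generating set lying in $\hfn_0$ that is stable under $\mathrm{ad}(e_0)$, and the computation $\mathrm{ad}(e_0)(e_{\ga_1+\ga_2}^{p}) \in \C^\times e_{\ga_1+\ga_2}^{p-1}f_{\ga_1+2\ga_2}t + \C^\times e_{\ga_1+\ga_2}^{p-2}f_{\ga_2}t + \C^\times e_{\ga_1+\ga_2}^{p-3}e_{\ga_1}t$ applied to the generator $e_{\ga_1+\ga_2}^{3r+1}$ forces one to adjoin $e_{\ga_1}t\,e_{\ga_1+\ga_2}^{3r-2}$; its $s_0$-twist (note $s_0(\ga_1+\gd)=-(\ga_1+3\ga_2)+2\gd$ and $s_0(\ga_1+\ga_2)=-(\ga_1+2\ga_2)+\gd$) is exactly $f_{\ga_1+3\ga_2}t^2(f_{\ga_1+2\ga_2}t)^{3r-2}$. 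In particular the twisting lemma delivers both directions of the equality at once --- the hard ``these generators suffice'' containment is not established by a PBW-reduction bound, which in your plan would remain an unproved (and delicate) step even after the character formula were secured. The remaining cases are handled by observing that $\go_2+\gL_0$ \emph{is} dominant (so $s=1$ adds nothing) and by first collapsing $D\big(r(3\go_2+\gL_0),2\go_2+\gL_0\big)\cong D\big((3r+2)\go_2+(r+1)\gL_0\big)$ when $s=2$, a reduction absent from your outline.
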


\begin{proof}
 First assume that $s=0$, and set $\gL = r(-2\go_1+3\go_2+\gL_0)$.
 Note that 
 \[ F_0D(k\gL_0, \gL) \cong D\big(k(\go_1+\gL_0), r(3\go_2+\gL_0)\big) \big(= U(\hfb)v_T\big)
 \]
 holds by (\ref{eq:recursive}), and we have
 \[ \mathrm{Ann}_{U(\hfn_+)}(v_{k\gL_0}\otimes v_{\gL}) =\mathrm{Ann}_{U(\hfn_+)}(v_\gL)
 \]
 since $\hfn_+$ acts trivially on $v_{k\gL_0}$. 
 We shall check that $D(k\gL_0, \gL)$ satisfies the conditions (i) -- (iii) (for $T$) in \cite[Lemma 5.3]{MR3120578}.
 Note that the condition (iii) holds by \cite[Theorem 5]{MR1887117}. 
 By \cite[Lemma 26]{MR932325}, we have
 \begin{align*}
  \mathrm{Ann}_{U(\hfn_+)}(v_\gL) &= U(\hfn_+)\Big(\bigoplus_{\ggg \in \hat{\gD}_+^{\mathrm{re}}} 
    \C x_{\ggg}^{\max\{0,-\gL(\ggg^{\vee})\}+1} + t\fh[t]\Big)\\
   &= U(\hfn_+)e_0 + U(\hfn_+) \Big(\bigoplus_{\ggg \in \hgDre_+ \setminus \{\ga_0\}} 
    \C x_{\ggg}^{\max\{0,-\gL(\ggg^{\vee})\}+1} + t\fh[t]\Big).
 \end{align*}
 It follows that
 \[ \max\{0,-\gL(\ggg^{\vee})\} = \begin{cases} 3r & \ggg= \ga_1+\ga_2,\\
                                                2r & \ggg= \ga_1,\\
                                                r  & \ggg= \ga_1+\gd \ \text{or} \ 2\ga_1+3\ga_2,\\
                                                0  & \text{otherwise.}
                                  \end{cases}
 \]
 Let $\hfn_0$ be the Lie subalgebra $\bigoplus_{\ggg \in \hgDre_+\setminus\{\ga_0\}} \C x_{\ggg} \oplus t\fh[t]$ of $\hfn_+$,
 and define a left $U(\hfn_0)$-ideal $\mathcal{I}$ by
 \[ \mathcal{I} = U(\hfn_0) \Big(\bigoplus_{\ggg \in \hgDre_+ \setminus \{\ga_0\}}
    \C x_{\ggg}^{\max\{0,-\gL(\ggg^{\vee})\}+1} +\C e_{\ga_1}te_{\ga_1+\ga_2}^{3r-2} + t\fh[t]\Big).
 \]
 It is directly checked for every $p \in \Z_+$ that
 \[ \mathrm{ad}(e_0)(e_{\ga_1+\ga_2}^{p}) \in \C^\times e_{\ga_1+\ga_2}^{p-1}f_{\ga_1+2\ga_2}t + \C^\times 
    e_{\ga_1+\ga_2}^{p-2}f_{\ga_2}t + \C^\times e_{\ga_1+\ga_2}^{p-3}e_{\ga_1}t,
 \]
 where we set $e_{\ga_1+\ga_2}^q=0$ if $q<0$.
 Using this we see that $\mathcal{I}$ is $\mathrm{ad}(e_0)$-invariant, and 
 \[ \mathrm{Ann}_{U(\hfn_+)}(v_\gL)= U(\hfn_+)e_0 + U(\hfn_+)\mathcal{I}.
 \]
 Now the assertion (for $s=0$) follows by \cite[Lemma 5.3]{MR3120578}.
 
 The case $s=1$ is easily proved from the case $s=0$ since $\hfn_+$ acts trivially on $v_{\go_2+\gL_0}$ and hence
 \[ \mathrm{Ann}_{U(\hfn_+)}\big(v_{k(\go_1+\gL_0)} \otimes v_{r(3\go_2+\gL_0)} \otimes v_{\go_2+\gL_0}\big) 
    = \mathrm{Ann}_{U(\hfn_+)}(v_{k(\go_1+\gL_0)} \otimes v_{r(3\go_2+\gL_0)}\big).
 \]

 For the case $s=2$, notice by (\ref{eq:recursive}) that
 \begin{align*}
  D\big(r(3\go_2+\gL_0), 2\go_2+\gL_0\big) \cong F_0F_1F_2F_1F_0D(r\gL_0, \go_2+\gL_0).
 \end{align*}
 Then this is isomorphic to
 \[ F_0F_1F_2F_1F_0D\big(\go_2+(r+1)\gL_0\big) \cong D\big((3r+2)\go_2+(r+1)\gL_0\big)
 \]
 since the $\hfg$-submodule of $\hV(r\gL_0) \otimes \hV(\go_2+\gL_0)$ generated by the tensor product of 
 highest weight vectors is isomorphic to $\hV\big(\go_2+(r+1)\gL_0\big)$.
 Hence we have 
 \[ D\big(k(\go_1+\gL_0), r(3\go_2+\gL_0), 2\go_2+\gL_0\big) \cong D\big(k(\go_1+\gL_0),(3r+2)\go_2+(r+1)\gL_0\big).
 \]
 Using this isomorphism, the assertion for $s=2$ is proved in almost the same way with the case $s=0$.
\end{proof}

Now Lemma \ref{Lem:Ann} and the following proposition yield a $(\fh \oplus \hfn_+)$-module homomorphism
from $U(\fh \oplus \hfn_+)v_T$ to $M(\gl)$ sending $v_T$ to $v_M$ since their weights are both $\gl$, 
and then the existence of a surjection
$T(\gl) \twoheadrightarrow M(\gl)$ is proved by the same argument with \cite[two paragraphs below Lemma 5.2]{MR3120578}.

\begin{Prop}\label{Prop:annihilation}
 The vector $v_M \in M(\gl)$ satisfies the relations
 \[ x_{\ggg}^{\rho(\ggg)+1}v_M=0 \ \text{for} \ \ggg \in \hgDre_+, \ \ \ t\fh[t]v_M = 0, \ \ \ f_{\ga_1+3\ga_2}t^2
    (f_{\ga_1+2\ga_2}t)^{3r-2}v_M=0,
 \]
 where the last one is omitted when $r = 0$.
\end{Prop}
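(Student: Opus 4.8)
The plan is to verify, one family at a time, that $v_M$ is annihilated by each of the generators of $\mathrm{Ann}_{U(\hfn_+)}(v_T)$ exhibited in Lemma \ref{Lem:Ann}, following the strategy of \cite{MR3120578}. Two families are immediate from the defining relations (\ref{eq:relations_of_M}): the relation $t\fh[t]v_M=0$ is literally part of the definition of $M(\gl)$, and for $\ggg=\ga+p\gd\in\hgDre_+$ with $\ga\in\gD_+$ one has $\rho(\ggg)=0$ and $x_\ggg=e_\ga\otimes t^p\in\fn_+[t]$, so $x_\ggg v_M=0$. For $\ggg=-\gb+p\gd$ with $\gb\in\{\ga_1,\ga_2,\ga_1+\ga_2\}$ (again $\rho(\ggg)=0$) I would argue by induction on $p\ge1$: the base case $(f_\gb\otimes t)v_M=0$ is in (\ref{eq:relations_of_M}), and since $[\gb^\vee\otimes t,\,f_\gb\otimes t^{p-1}]=-2\,f_\gb\otimes t^p$ with $(\gb^\vee\otimes t)v_M=0$, the inductive step gives $(f_\gb\otimes t^p)v_M=0$ for all $p\ge1$.

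The real content lies in the power relations for the three ``heavy'' roots $\ga_1+2\ga_2$, $\ga_1+3\ga_2$, $2\ga_1+3\ga_2$, where $\rho(\ggg)$ is nonzero. The key structural input is that $M(\gl)$ is integrable as a $\fg$-module: it is generated over $\fg[t]$ by the $\fg$-highest weight vector $v_M$ with $f_i^{\langle\ga_i^\vee,\gl\rangle+1}v_M=0$, and $\ad\fg$ acts locally finitely on $\fg[t]$, so ordinary $\mathfrak{sl}_2$-theory applies to the subalgebra attached to each simple root acting on any vector. I would exploit this as follows, taking $\ggg=-(2\ga_1+3\ga_2)+\gd$ (so $x_\ggg=e_0=f_{2\ga_1+3\ga_2}\otimes t$) as the model. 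Writing $v=(f_{2\ga_1+3\ga_2}\otimes t)^Nv_M$, its $\ga_1$-weight is $k-N$; using that $[e_{\ga_1},f_{2\ga_1+3\ga_2}\otimes t]$ is a nonzero multiple of $f_{\ga_1+3\ga_2}\otimes t$, that $[e_{\ga_1},f_{\ga_1+3\ga_2}\otimes t]=0$, and that $f_{\ga_1+3\ga_2}\otimes t$ commutes with $f_{2\ga_1+3\ga_2}\otimes t$, one finds that $e_{\ga_1}^{(a)}v$ is a scalar multiple of $(f_{2\ga_1+3\ga_2}\otimes t)^{N-a}(f_{\ga_1+3\ga_2}\otimes t)^av_M$. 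Once the relation $(f_{\ga_1+3\ga_2}\otimes t)^{2r+\gd_{s2}+1}v_M=0$ has already been established, this shows $e_{\ga_1}^{(a)}v=0$ for $a\ge 2r+\gd_{s2}+1$, and comparing with the $\ga_1$-weight $k-N$ through the $\mathfrak{sl}_2$-theory forces $v=0$ as soon as $N=k+2r+\gd_{s2}+1$. This reproduces the exponent $\rho(\ggg)+1$ additively, the summand $k$ coming from the finite weight and the summand $2r+\gd_{s2}$ from the previously established relation for $\ga_1+3\ga_2$.

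The remaining heavy cases would be handled by the same mechanism, but they require a careful bootstrap order, since the $\mathfrak{sl}_2(\ga_i)$-strings through the shorter roots are longer and the relevant root vectors no longer commute: raising $f_{\ga_1+2\ga_2}\otimes t$ by $e_{\ga_2}$, for instance, produces $f_{\ga_1+\ga_2}\otimes t$ (which kills $v_M$) together with correction terms proportional to $f_{2\ga_1+3\ga_2}\otimes t^2$, i.e.\ the root $-(2\ga_1+3\ga_2)+2\gd$ at $p=2$. Thus the $p=1$ relation for $\ga_1+2\ga_2$, the $p=1$ relations for the two long roots, and the $p=2$ relations (which have $\rho=r$) are intertwined and must be proven in an order making each appeal only to already-settled ones; the cases $p\ge 3$ (all with $\rho=0$) then follow by similar commutator reductions. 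The exceptional mixed relation $f_{\ga_1+3\ga_2}t^2(f_{\ga_1+2\ga_2}t)^{3r-2}v_M=0$ I would establish by a direct $G_2$ computation, in the spirit of the identity for $\ad(e_0)(e_{\ga_1+\ga_2}^p)$ used in the proof of Lemma \ref{Lem:Ann}; Garland's identity \cite{MR932325} can supply the needed reductions modulo $\fh\otimes t\C[t]$ (which annihilates $v_M$) whenever a direct string argument is awkward.

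The hard part will be exactly this interlocking induction for the heavy roots: keeping track of the $G_2$ structure constants and of the correction terms that shift the $t$-degree (and hence move one between the $p=1$ and $p=2$ families), and verifying that the $s$-dependent corrections $\gd_{s2}$ match on the nose for each of $s=0,1,2$. The finite-weight/integrability bound and the already-established ``higher'' relation must combine to give precisely $\rho(\ggg)+1$, and neither one more nor one less, which is where the bookkeeping is most delicate; the standalone verification of the mixed relation is the other computationally heavy point.
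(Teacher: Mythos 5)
Your handling of the routine relations and of the long root $2\ga_1+3\ga_2$ is correct and agrees with the paper: the paper likewise disposes of the $\rho(\ggg)=0$ cases by commutator reductions, and passes from $(f_{\ga_1+3\ga_2}t)^{2r+1}v_M=0$ together with $f_{\ga_1}^{k+1}v_M=0$ to $(f_{2\ga_1+3\ga_2}t)^{k+2r+1}v_M=0$ by exactly the elementary Heisenberg/$\mathfrak{sl}_2$ fact you reconstruct (it cites \cite[Lemma 4.5]{MR2855081}). The gap is that everything you conditioned on is left unproven, and the mechanism you propose for it --- weight bounds plus $\mathfrak{sl}_2$-string arguments in a ``careful bootstrap order'' --- cannot by itself deliver the three relations that carry the content: $(f_{\ga_1+3\ga_2}t)^{2r+1}v_M=0$, $(f_{\ga_1+2\ga_2}t)^{3r+1}v_M=0$, and the mixed relation. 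The paper's first missing ingredient is a closed-form divided-power straightening identity (its Lemma 4.4 and eq.~(4.3)): $e_{\ga_2}^{(p)}(f_{\ga_1+3\ga_2}t)^{(q)} \equiv \sum_{i} (f_{2\ga_1+3\ga_2}t^2)^{(i)}(f_{\ga_1+3\ga_2}t)^{(q-p+i)}(f_{\ga_1+2\ga_2}t)^{(p-3i)}$ modulo $U(\fg)\big(\C e_{\ga_2}\oplus\C f_{\ga_1}t\oplus\C f_{\ga_1+\ga_2}t\big)$, with $\max\{0,p-q\}\le i\le p/3$. For $p=3r+3$, $q=2r+1$ the index range is empty, so $e_{\ga_2}^{3r+3}(f_{\ga_1+3\ga_2}t)^{2r+1}v_M=0$; since this vector has $\ga_2^\vee$-weight $-(3r+3)$, integrability forces it to vanish. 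Your qualitative observation about correction terms proportional to $f_{2\ga_1+3\ga_2}t^2$ identifies the right phenomenon, but without the exact identity one cannot see that the corrections disappear at precisely the threshold $\rho(\ggg)+1$.

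More seriously, for the short root the corrections never disappear: applying $e_{\ga_2}^{(p)}$ to $(f_{\ga_1+3\ga_2}t)^{(p)}v_M$ and multiplying by $(f_{\ga_1+2\ga_2}t)^{3r+1-p}$ yields, for each $2r+1\le p\le 3r+1$, a nontrivial linear relation among the vectors $(f_{2\ga_1+3\ga_2}t^2)^{(i)}(f_{\ga_1+3\ga_2}t)^{(i)}(f_{\ga_1+2\ga_2}t)^{3r+1-3i}v_M$, and no single string or weight argument isolates the $i=0$ term. The paper extracts $(f_{\ga_1+2\ga_2}t)^{3r+1}v_M=0$ by proving the coefficient matrix $a_{ij}=1/(3r+1-3i-j)!$ is invertible: a kernel vector produces a polynomial $f$ with $f^{(j)}(1)=0$ for $0\le j\le r$, hence divisible by $(x-1)^{r+1}$, hence --- using the symmetry $f(\zeta x)=\zeta f(x)$ for $\zeta$ a primitive cube root of unity, since all exponents are $\equiv 1 \bmod 3$ --- divisible by $(x^3-1)^{r+1}$, which exceeds $\deg f = 3r+1$ unless $f=0$. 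The mixed relation $f_{\ga_1+3\ga_2}t^2(f_{\ga_1+2\ga_2}t)^{3r-2}v_M=0$ is then obtained by first showing $(f_{\ga_1+3\ga_2}t)^{(p)}f_{\ga_1+3\ga_2}t^2\,v_M=0$ via a Garland-type expansion of $e_{\ga_1+3\ga_2}(f_{\ga_1+3\ga_2}t)^{(p+2)}v_M$ (here your appeal to Garland is in the right spirit), and then rerunning the same matrix-inversion argument with the polynomial $f(x)=v_0x^{r_5}+\cdots$ adapted. These two ideas --- the straightening identity and the linear-algebra/polynomial argument --- are the core of the proposition; your proposal names them as ``the hard part'' and defers them, so as written the proof does not close.
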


The rest of this subsection is devoted to prove Proposition \ref{Prop:annihilation}.
For simplicity \textit{we assume that $s=0$ in the rest of this subsection}, and prove the proposition only in this case. 
The proof of the other cases are almost the same.
Note that the relations $x_{\ggg}v_M = 0$ for
\[ \ggg \notin \{-(\ga_1+2\ga_2)+\gd, -(\ga_1+3\ga_2)+\gd, -(2\ga_1+3\ga_2)+\gd, -(\ga_1+3\ga_2)+2\gd, -(2\ga_1+3\ga_2)+2\gd\}
\]
and $t\fh[t]v_M=0$ are easily proved from the definition.
For example when $\ggg = -(\ga_1+2\ga_2)+2\gd$, $x_{\ggg}v_M = 0$ follows since $[x_{-(\ga_1+\ga_2)+\gd}, x_{-\ga_2+\gd}]v_M = 0$.

For computational convenience, we assume from now on that the root vectors are normalized so that
\begin{align*} [e_{\ga_2}, f_{\ga_1+3\ga_2}] = f_{\ga_1+2\ga_2}, \ [e_{\ga_2}, f_{\ga_1+2\ga_2}] &= f_{\ga_1+\ga_2}, \ 
  [e_{\ga_2}, f_{\ga_1+\ga_2}] = f_{\ga_1},\\
   [f_{\ga_1+\ga_2}, f_{\ga_1+2\ga_2}] &= 6f_{2\ga_1+3\ga_2}.
\end{align*}
For an element $X$ in an algebra and $p \in \Z_+$ denote by $X^{(p)}$ the divided power $X^p/p!$,
and set $X^{(p)} =0$ if $p <0$.


\begin{Lem}\label{Lemma:key}
 {\normalfont(i)} For $q \in \Z_+$, we have
 \[ e_{\ga_2} f_{\ga_1+2\ga_2}^{(q)} \equiv 3 f_{2\ga_1+3\ga_2}f_{\ga_1+2\ga_2}^{(q-2)} \ \ \ 
    \mathrm{mod} \ U(\fg)\big(\C e_{\ga_2} \oplus \C f_{\ga_1} \oplus 
    \C f_{\ga_1+\ga_2}).
 \]
 {\normalfont (ii)} For $p,q\in \Z_+$, we have
 \[ e_{\ga_2}^{(p)}f_{\ga_1+3\ga_2}^{(q)} \equiv \sum_{i} f_{2\ga_1+3\ga_2}^{(i)}
    f_{\ga_1+3\ga_2}^{(q-p+i)}f_{\ga_1+2\ga_2}^{(p-3i)} \ \ \mathrm{mod} \ U(\fg)\big(\C e_{\ga_2} \oplus \C f_{\ga_1}
    \oplus \C f_{\ga_1+\ga_2}),
 \]
 where $i$ runs over the set of integers such that $\max\{0, p-q\} \le i \le p/3$.
\end{Lem}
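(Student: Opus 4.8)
The plan is to reduce both statements to a computation inside the nilpotent subalgebra $\fm\subseteq\fg$ spanned by the five vectors occurring on the right-hand sides, together with the derivation $\ad(e_{\ga_2})$. Write $e=e_{\ga_2}$, $c=f_{2\ga_1+3\ga_2}$, $b_j=f_{\ga_1+j\ga_2}$ for $j=1,2,3$, and $b_0=f_{\ga_1}$. First I would record the structure constants forced by the chosen normalization together with the root-system of $G_2$: $\ad(e)$ acts as the chain $b_3\mapsto b_2\mapsto b_1\mapsto b_0\mapsto 0$ and kills $c$; the only nonzero bracket among the five vectors is $[b_1,b_2]=6c$, and $c$ is central in $\fm$, all remaining brackets vanishing because the relevant weights (e.g. $-(2\ga_1+5\ga_2)$) are not roots. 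These are exactly the relations that will drive the two identities.

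Next I would make explicit two reduction rules modulo the left ideal $\mathcal J=U(\fg)(\C e\oplus\C b_0\oplus\C b_1)$. Rule (a): for any $X$ one has $eX\equiv\ad(e)(X)\pmod{\mathcal J}$, since $Xe\in U(\fg)e\subseteq\mathcal J$; moreover $e\cdot\mathcal J\subseteq\mathcal J$ because $\mathcal J$ is a left ideal. Rule (b): since the five vectors commute up to the central $c$, every element of $U(\fm)$ can be put in PBW order (powers of $c$, then $b_3$, $b_2$, $b_1$, $b_0$), and any monomial whose rightmost factor is $b_0$ or $b_1$ lies in $\mathcal J$ and may be discarded; thus the surviving terms are precisely the monomials in $c,b_3,b_2$.

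For part (i), rule (a) and the Leibniz rule give $\ad(e)(b_2^{(q)})=b_2^{(q-1)}b_1+3c\,b_2^{(q-2)}$, the second summand arising from commuting the $b_1$'s past the $b_2$'s via $[b_1,b_2]=6c$ (the sum $\sum_{j=0}^{q-1}6(q-1-j)/q!$ collapses to the coefficient $3$); rule (b) kills the first summand and yields (i). For part (ii) I would induct on $p$, the case $p=0$ being trivial. Using $e\cdot e^{(p-1)}=p\,e^{(p)}$ and the inductive hypothesis, it suffices to apply $e$ to each surviving term $c^{(i)}b_3^{(a)}b_2^{(b)}$ with $a=q-p+1+i$, $b=p-1-3i$. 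Rule (a), Leibniz (together with $\ad(e)(b_3^{(a)})=b_3^{(a-1)}b_2$ and the computation from (i)), and rule (b) give the two-term recursion
\[
 e\,c^{(i)}b_3^{(a)}b_2^{(b)}
 \equiv (b+1)\,c^{(i)}b_3^{(a-1)}b_2^{(b+1)}
  + 3(i+1)\,c^{(i+1)}b_3^{(a)}b_2^{(b-2)}\pmod{\mathcal J}.
\]
Substituting $a-1=q-p+i$, $b+1=p-3i$ and $b-2=p-3(i+1)$, one sees that the target term indexed by $n$ receives the coefficient $(p-3n)$ from the first summand at $i=n$ and the coefficient $3n$ from the second summand at $i=n-1$; these add to $p$. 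Dividing by $p$ gives exactly $\sum_i c^{(i)}b_3^{(q-p+i)}b_2^{(p-3i)}$, and the convention $X^{(m)}=0$ for $m<0$ enforces the range $\max\{0,p-q\}\le i\le p/3$.

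I expect the main obstacle to be purely organizational: pinning down all structure constants consistently with the normalization and, above all, checking that the two coefficients $(p-3n)$ and $3n$ in the inductive step telescope to $p$, so that after division the divided powers emerge with coefficient exactly $1$. No genuinely hard idea is needed once the reduction modulo $\mathcal J$ is in place; the entire content lies in this clean cancellation.
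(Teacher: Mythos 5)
Your proposal is correct and takes essentially the same route as the paper's own proof: part (i) by commuting $e_{\alpha_2}$ through the divided power and collapsing $\sum_{j}6(q-1-j)$ to the coefficient $3$, and part (ii) by induction on $p$ via the identical two-term recursion, whose coefficients $p-3n$ and $3n$ telescope to $p$ exactly as in the paper (where they appear as $p-3i+1$ and $3(i+1)$ summing to $p+1$). One harmless slip in your structural setup: since $2\alpha_1+3\alpha_2$ is a root of $G_2$, the bracket $[f_{\alpha_1},f_{\alpha_1+3\alpha_2}]$ is a \emph{nonzero} multiple of $f_{2\alpha_1+3\alpha_2}$, so your claim that $[b_1,b_2]=6c$ is the only nonvanishing bracket among the five vectors is false --- but this never enters the argument, because $b_0=f_{\alpha_1}$ never actually occurs in your computation (it could only arise from $\mathrm{ad}(e)(b_1)$, and all terms ending in $b_1$ are discarded into the left ideal $\mathcal{J}$ before $e$ is applied again), while the weaker statement you actually use, that the five vectors commute up to the central element $c$, remains true.
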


\begin{proof}
 We have
 \begin{align*}
  e_{\ga_2} f_{\ga_1+2\ga_2}^{(q)} &\equiv \frac{1}{q!}\sum_{i=1}^q f_{\ga_1+2\ga_2}^{i-1}f_{\ga_1+\ga_2}f_{\ga_1+2\ga_2}^{q-i}
   \equiv \frac{1}{q!}\sum_{i=1}^q 6(q-i)f_{2\ga_1+3\ga_2}f_{\ga_1+2\ga_2}^{q-2}\\ &= \frac{1}{q!}\cdot 
   3q(q-1)f_{2\ga_1+3\ga_2} f_{\ga_1+2\ga_2}^{q-2} = 3f_{2\ga_1+3\ga_2}f_{\ga_1+2\ga_2}^{(q-2)},
 \end{align*}
 and the assertion (i) holds. 
 The assertion (ii) with $p = 1$ is immediate. 
 Then we have by induction and (i) that
 \begin{align*}
  &(p+1)e_{\ga_2}^{(p+1)}f_{\ga_1+3\ga_2}^{(q)} \equiv e_{\ga_2} \sum_{i} 
    f_{2\ga_1+3\ga_2}^{(i)}f_{\ga_1+3\ga_2}^{(q-p+i)}f_{\ga_1+2\ga_2}^{(p-3i)}\\
  \equiv&  \sum_{i} f_{2\ga_1+3\ga_2}^{(i)} \Big(f_{\ga_1+3\ga_2}^{(q-p+i-1)}
     f_{\ga_1+2\ga_2}f_{\ga_1+2\ga_2}^{(p-3i)} + 3f_{2\ga_1+3\ga_2}f_{\ga_1+3\ga_2}^{(q-p+i)}
     f_{\ga_1+2\ga_2}^{(p-3i-2)}\Big)\\
  =& \sum_{i} (p-3i+1)f_{2\ga_1+3\ga_2}^{(i)}f_{\ga_1+3\ga_2}^{(q-p+i-1)}
   f_{\ga_1+2\ga_2}^{(p-3i+1)}
   + \sum_{i} 3(i+1)f_{2\ga_1+3\ga_2}^{(i+1)}f_{\ga_1+3\ga_2}^{(q-p+i)}f_{\ga_1+2\ga_2}^{(p-3i-2)}\\
   =& (p+1)\sum_{i} f_{2\ga_1+3\ga_2}^{(i)} f_{\ga_1+3\ga_2}^{(q-p+i-1)}f_{\ga_1+2\ga_2}^{(p-3i+1)}.
 \end{align*}
 Hence the assertion (ii) holds.
\end{proof}

By Lemma \ref{Lemma:key} (ii), we also see that
\begin{align}\label{eq:Lemma(ii)}
 e_{\ga_2}^{(p)}(f_{\ga_1+3\ga_2}t)^{(q)} \equiv \sum_{i=\max\{0,p-q\}}^{\lfloor p/3 \rfloor} 
 (f_{2\ga_1+3\ga_2}t^2)^{(i)}
 (f_{\ga_1+3\ga_2}&t)^{(q-p+i)}(f_{\ga_1+2\ga_2}t)^{(p-3i)}\\ \ \mathrm{mod}& \ U(\fg)\big(\C e_{\ga_2} \oplus \C f_{\ga_1}t
 \oplus \C f_{\ga_1+\ga_2}t).\nonumber
\end{align}

\begin{Lem}\label{Lemma:rel1}
 The relations $(f_{\ga_1+3\ga_2}t)^{2r+1}v_M=0$ and $(f_{2{\ga_1}+3{\ga_2}}t)^{k+2r+1}v_M=0$ hold.
\end{Lem}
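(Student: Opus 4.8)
The plan is to derive the two relations of Lemma \ref{Lemma:rel1} from the already-established defining relations of $M(\gl)$ in (\ref{eq:relations_of_M}), namely $f_i^{\langle\ga_i^\vee,\gl\rangle+1}v_M=0$ together with the degree-one annihilations $f_{\ga_1}t\,v_M = f_{\ga_2}t\,v_M = f_{\ga_1+\ga_2}t\,v_M = 0$ and $t\fh[t]v_M=0$. The key observation is that both target vectors $(f_{\ga_1+3\ga_2}t)^{2r+1}v_M$ and $(f_{2\ga_1+3\ga_2}t)^{k+2r+1}v_M$ have weights lying strictly below $\gl$ in the $\alpha_2$-direction (respectively the $\alpha_1$-direction), so I would try to produce them by applying lowering operators to a highest vector of an $\mathfrak{sl}_2$-string and exploit the Serre-type relation $f_i^{N+1}v_M=0$. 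Concretely, recall $\langle\ga_2^\vee,\gl\rangle = l = 3r+s = 3r$ (since $s=0$), so $f_{\ga_2}^{3r+1}v_M=0$, and $\langle\ga_1^\vee,\gl\rangle = k$, so $f_{\ga_1}^{k+1}v_M=0$.

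First I would treat $(f_{\ga_1+3\ga_2}t)^{2r+1}v_M = 0$. The idea is to realize the vector $(f_{\ga_1+3\ga_2}t)^{(q)}v_M$ as (up to nonzero scalar) the image of $f_{\ga_2}^{(3q)}$ applied to something killed appropriately, using the commutation structure already encoded in Lemma \ref{Lemma:key} and its corollary (\ref{eq:Lemma(ii)}). More precisely, I would apply the operator $e_{\ga_2}^{(p)}$ to the $\mathfrak{sl}_2$-highest relation coming from $f_{\ga_2}$. Since $f_{\ga_2}^{3r+1}v_M=0$, raising back by $e_{\ga_2}$ and using $f_{\ga_2}t\,v_M = f_{\ga_1}t\,v_M = f_{\ga_1+\ga_2}t\,v_M=0$ to discard all the $U(\fg)(\C e_{\ga_2}\oplus\C f_{\ga_1}t\oplus\C f_{\ga_1+\ga_2}t)$ terms appearing in (\ref{eq:Lemma(ii)}), the right-hand side of (\ref{eq:Lemma(ii)}) collapses, on $v_M$, to a single surviving monomial in $(f_{2\ga_1+3\ga_2}t^2)$, $(f_{\ga_1+3\ga_2}t)$, $(f_{\ga_1+2\ga_2}t)$. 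Choosing $p,q$ so that the $(f_{\ga_1+2\ga_2}t)$-exponent $p-3i$ and the $(f_{2\ga_1+3\ga_2}t^2)$-exponent $i$ both vanish forces $i=0$, $p=0$ — so I instead run this with the pair that isolates the pure $(f_{\ga_1+3\ga_2}t)$-power, letting the known vanishing $f_{\ga_2}^{3r+1}v_M=0$ bound the achievable power of $f_{\ga_2}$, and read off that the $(f_{\ga_1+3\ga_2}t)^{2r+1}$ term must vanish because the corresponding $f_{\ga_2}$-weight would drop below the string length $3r$.

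For the second relation $(f_{2\ga_1+3\ga_2}t)^{k+2r+1}v_M=0$, I would argue analogously in the $\ga_1$-direction: the root vector $f_{2\ga_1+3\ga_2}$ appears via the bracket $[f_{\ga_1+\ga_2}, f_{\ga_1+2\ga_2}] = 6 f_{2\ga_1+3\ga_2}$ (the normalization fixed just before Lemma \ref{Lemma:key}), so I would express powers of $f_{2\ga_1+3\ga_2}t$ through products of $f_{\ga_1}$-type lowering operators, then apply $e_{\ga_1}^{(p)}$ and use $f_{\ga_1}^{k+1}v_M=0$ together with $f_{\ga_1}t\,v_M=0$ to kill the unwanted commutator terms. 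The combinatorial bookkeeping here is the analogue of Lemma \ref{Lemma:key}, and the exponent $k+2r$ should emerge as the sum of the $\ga_1$-string length $k$ coming from $f_{\ga_1}^{k+1}v_M=0$ and a contribution $2r$ from the $\ga_2$-degree, exactly as in the first relation. I would likely first establish an auxiliary identity expressing $(f_{2\ga_1+3\ga_2}t)^{(p)}$ modulo the ideal generated by the degree-one relations, parallel to (\ref{eq:Lemma(ii)}), and then specialize.

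The main obstacle I expect is the second relation rather than the first: for $(f_{\ga_1+3\ga_2}t)^{2r+1}$ the corollary (\ref{eq:Lemma(ii)}) already does almost all the work, but for $(f_{2\ga_1+3\ga_2}t)^{k+2r+1}$ the relevant root $2\ga_1+3\ga_2$ is the highest root, reached by a longer chain of brackets, so the commutator expansions mix both the $\ga_1$- and $\ga_2$-filtrations and I must carefully track which monomials survive modulo the degree-one annihilators. Keeping the modular bookkeeping honest — i.e.\ verifying that every term I discard genuinely lies in the left ideal $U(\fg)(\C e_{\ga_1}\oplus\C f_{\ga_2}t\oplus\cdots)$ annihilating $v_M$, and that no cross-term in $t^2$ slips through uncancelled — will be the delicate part of the argument.
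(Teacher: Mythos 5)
Your plan for $(f_{\ga_1+3\ga_2}t)^{2r+1}v_M=0$ assembles the right tools but misstates the decisive step. What makes the paper's argument work is not that (\ref{eq:Lemma(ii)}) collapses on $v_M$ to ``a single surviving monomial'': for the crucial choice $p=3r+3$, $q=2r+1$ the index range $\max\{0,p-q\}\le i\le\lfloor p/3\rfloor$ reads $r+2\le i\le r+1$ and is \emph{empty}, so $e_{\ga_2}^{(3r+3)}(f_{\ga_1+3\ga_2}t)^{(2r+1)}$ lies entirely in $U(\fg)\big(\C e_{\ga_2}\oplus\C f_{\ga_1}t\oplus\C f_{\ga_1+\ga_2}t\big)$ and hence annihilates $v_M$. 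Combined with the weight computation $\langle\ga_2^\vee,\gl-(2r+1)(\ga_1+3\ga_2)\rangle=3r-3(2r+1)=-(3r+3)$ and integrability of $M(\gl)$ (a weight vector of $\ga_2$-weight $-N$ killed by $e_{\ga_2}^{N}$ is zero), this yields the relation. Your appeal to $f_{\ga_2}^{3r+1}v_M=0$ as the bounding input is a red herring: the Serre-type relations enter only through integrability, and if a monomial genuinely survived on the right-hand side of (\ref{eq:Lemma(ii)}) you could conclude nothing.

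For $(f_{2\ga_1+3\ga_2}t)^{k+2r+1}v_M=0$ there is a genuine gap: you treat it as an independent computation in the $\ga_1$-direction, built on $[f_{\ga_1+\ga_2},f_{\ga_1+2\ga_2}]=6f_{2\ga_1+3\ga_2}$, and merely assert that the exponent $k+2r$ ``should emerge.'' That route is unpromising---$f_{\ga_1+\ga_2}$ and $f_{\ga_1+2\ga_2}$ have $t$-degree zero and do not annihilate $v_M$, so none of your expansions collapse---and it overlooks that the second relation is a \emph{consequence of the first}. The elements $f_{\ga_1}$, $f_{\ga_1+3\ga_2}t$, $f_{2\ga_1+3\ga_2}t$ span a three-dimensional Heisenberg subalgebra of $\fg[t]$: $[f_{\ga_1},f_{\ga_1+3\ga_2}t]\in\C^\times f_{2\ga_1+3\ga_2}t$, and $f_{2\ga_1+3\ga_2}t$ is central since neither $3\ga_1+3\ga_2$ nor $3\ga_1+6\ga_2$ is a root. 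The paper then quotes the elementary fact (see \cite[Lemma 4.5]{MR2855081}) that $x^{a+1}v=y^{b+1}v=0$ with $z=[x,y]$ central forces $z^{a+b+1}v=0$; taking $a=k$ from $f_{\ga_1}^{k+1}v_M=0$ and $b=2r$ from the first relation gives exactly $(f_{2\ga_1+3\ga_2}t)^{k+2r+1}v_M=0$. Equivalently, one can check that modulo $U(\fg)e_{\ga_1}$ one has $e_{\ga_1}^{(2r+1)}(f_{2\ga_1+3\ga_2}t)^{(k+2r+1)}\equiv c\,(f_{2\ga_1+3\ga_2}t)^{(k)}(f_{\ga_1+3\ga_2}t)^{(2r+1)}$ with $c\neq0$ (using $[e_{\ga_1},f_{\ga_1+3\ga_2}]=0$), which kills $v_M$ by the first relation; since the target vector has $\ga_1$-weight $-(2r+1)$, integrability finishes. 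Without this use of the first relation your sketch has no source for the summand $2r$ in the exponent, and the ``delicate bookkeeping'' you defer is precisely the missing idea.
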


\begin{proof}
 We have 
 \[ \big\langle \ga_2^{\vee}, \wt\big((f_{\ga_1+3\ga_2}t)^{2r+1}v_M\big)\big\rangle = \langle \ga_2^{\vee}, \gl - (2r+1)
    (\ga_1+3\ga_2)\rangle =-(3r+3).
 \]
 On the other hand, it follows from (\ref{eq:Lemma(ii)}) that 
 \[ e_{\ga_2}^{3r+3}(f_{{\ga_1}+3{\ga_2}}t)^{2r+1}v_M=0,
 \]
 and hence we have $(f_{{\ga_1}+3{\ga_2}}t)^{2r+1}v_M=0$ since $M(\gl)$ is an integrable $\fg$-module.
 Now it is an elementary fact that this relation and $f_{\ga_1}^{k+1}v_M=0$ imply $(f_{2{\ga_1}+3{\ga_2}}t)^{k+2r+1}v_M=0$
 (for example, see \cite[Lemma 4.5]{MR2855081}).
\end{proof}

\begin{Lem}
 The relations $(f_{2{\ga_1}+3{\ga_2}}t^2)^{r+1}v_M=0$ and $(f_{{\ga_1}+3{\ga_2}}t^2)^{r+1}v_M=0$ hold.
\end{Lem}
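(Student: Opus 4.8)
The plan is to prove the two relations $(f_{2\ga_1+3\ga_2}t^2)^{r+1}v_M=0$ and $(f_{\ga_1+3\ga_2}t^2)^{r+1}v_M=0$ by the same weight-theoretic strategy used in Lemma \ref{Lemma:rel1}: exhibit a high power of a raising operator $e_{\ggg}$ that annihilates the vector in question, and then invoke integrability of $M(\gl)$ as a $\fg$-module to conclude that the lowering vector itself vanishes. First I would compute the relevant weights. For the root $2\ga_1+3\ga_2$ we have $\langle\ga_1^\vee,2\ga_1+3\ga_2\rangle=\langle\ga_1^\vee,\go_1\rangle\cdot(\text{stuff})$; more directly $2\ga_1+3\ga_2=\go_1$, so the weight of $(f_{2\ga_1+3\ga_2}t^2)^{r+1}v_M$ is $\gl-(r+1)\go_1=(k-r-1)\go_1+l\go_2$, and one checks that pairing against the appropriate coroot produces a negative integer forcing the $\fg$-string to terminate. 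The goal is to find the integer $p$ so that $e_\ggg^{\,p}$ annihilates the vector; the key input will again be the commutation identities of Lemma \ref{Lemma:key}, which let one rewrite $e_{\ga_2}^{(p)}$ applied to products of the degree-two lowering operators.

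Concretely, I would proceed as follows. For $(f_{\ga_1+3\ga_2}t^2)^{r+1}v_M$, note $\ga_1+3\ga_2$ is the long root adjacent to $\go_1$ in the root string, and compute $\langle\ga_2^\vee,\gl-(r+1)(\ga_1+3\ga_2)\rangle$; using $\langle\ga_2^\vee,\ga_1+3\ga_2\rangle$ from the Cartan data $(\ga_1,\ga_2)=-3$, $(\ga_2,\ga_2)=2$ one gets a negative number of the right size. Then, as in the previous lemma, I would apply a suitable power of $e_{\ga_2}$, reduce modulo $U(\fg)(\C e_{\ga_2}\oplus\C f_{\ga_1}t\oplus\C f_{\ga_1+\ga_2}t)$ using the congruence (\ref{eq:Lemma(ii)}), and use the already-established relations $(f_{\ga_1+3\ga_2}t)^{2r+1}v_M=0$ and $(f_{2\ga_1+3\ga_2}t)^{k+2r+1}v_M=0$ from Lemma \ref{Lemma:rel1} together with $(f_{\ga_1+2\ga_2}t)$ and $(f_{\ga_1+3\ga_2}t)$ vanishing at the appropriate degrees to kill every surviving term. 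The identity (\ref{eq:Lemma(ii)}) is crucial because it expresses the raising action in terms of the degree-two generators $f_{2\ga_1+3\ga_2}t^2$ and $f_{\ga_1+3\ga_2}t^2$ that appear in the statement, thereby linking the two sought relations.

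The interplay between the two relations is the delicate point: I expect that one of them (likely $(f_{2\ga_1+3\ga_2}t^2)^{r+1}v_M=0$) follows most cleanly from a direct weight argument, and that the second is then deduced by combining the first with an $e_\ggg$-string computation, in the spirit of the elementary fact cited from \cite[Lemma 4.5]{MR2855081} that was used at the end of Lemma \ref{Lemma:rel1} to pass between $f_{\ga_1+3\ga_2}t$ and $f_{2\ga_1+3\ga_2}t$. The main obstacle I anticipate is bookkeeping the cross terms produced by (\ref{eq:Lemma(ii)}): applying $e_{\ga_2}^{(p)}$ to a power of $(f_{\ga_1+3\ga_2}t^2)$ yields a sum over $i$ of monomials in $f_{2\ga_1+3\ga_2}t^{\cdot}$, $f_{\ga_1+3\ga_2}t^{\cdot}$, $f_{\ga_1+2\ga_2}t^{\cdot}$ at mixed degrees, and one must verify that every term in the sum is killed by an already-known annihilation relation — that is, that the summation ranges and degree shifts conspire so that no term escapes. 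Managing these ranges, and in particular confirming that the lower and upper limits $\max\{0,p-q\}\le i\le\lfloor p/3\rfloor$ force each factor into the vanishing regime, is the technical heart of the argument, but it is a finite and mechanical verification once the target weights and the correct power $p$ are pinned down.
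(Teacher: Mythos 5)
Your plan rests on the template of Lemma \ref{Lemma:rel1}: find $p$ with $e_\ggg^{\,p}w=0$ and conclude $w=0$ from integrability. That mechanism cannot work for either vector here, and this is a genuine gap. The integrability argument requires the $\ggg^\vee$-weight of $w$ to be exactly $-p$ (so that $e_\ggg^{\,p}$ is injective on that weight space of an integrable module, as in the proof of Lemma \ref{Lemma:rel1}, where the weight under $\ga_2^\vee$ was $-(3r+3)$). But $w=(f_{2\ga_1+3\ga_2}t^2)^{r+1}v_M$ has weight $\gl-(r+1)\go_1=(k-r-1)\go_1+l\go_2$, whose pairings with $\ga_2^\vee$ and $\ga_1^\vee$ are $l\ge 0$ and $k-r-1$ (of no fixed sign); similarly for $(f_{\ga_1+3\ga_2}t^2)^{r+1}v_M$ the pairings are not forced negative. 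So no coroot pairing produces ``a negative number of the right size,'' and your anticipated ``direct weight argument'' for the first relation stalls at the outset: integrability yields no information about these vectors.

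The paper's actual argument is your expansion idea run in the \emph{opposite} direction: instead of expanding the target vector and killing its cross terms, one expands a vector \emph{already known to vanish} and recognizes the target as the unique surviving term. Concretely, Lemma \ref{Lemma:rel1} gives $(f_{\ga_1+3\ga_2}t)^{(2r+2)}v_M=0$; applying $e_{\ga_2}^{(3r+3)}$ and using (\ref{eq:Lemma(ii)}) with $p=3r+3$, $q=2r+2$ (the correction ideal $U(\fg)\big(\C e_{\ga_2}\oplus\C f_{\ga_1}t\oplus\C f_{\ga_1+\ga_2}t\big)$ annihilates $v_M$ by the defining relations (\ref{eq:relations_of_M})), the range $\max\{0,p-q\}\le i\le\lfloor p/3\rfloor$ collapses to the single index $i=r+1$, so
\[ 0=e_{\ga_2}^{(3r+3)}(f_{\ga_1+3\ga_2}t)^{(2r+2)}v_M=(f_{2\ga_1+3\ga_2}t^2)^{(r+1)}v_M.
\]
Your remark that the summation limits ``conspire'' is exactly this collapse, but you never identify the pivot of applying the raising operator to an already-zero vector, and without it the first relation is out of reach. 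For the second relation your instinct is essentially correct and matches the paper, though it is simpler than an $e$-string or \cite[Lemma 4.5]{MR2855081}-type argument: since $\mathrm{ad}(e_{\ga_1})f_{2\ga_1+3\ga_2}\in\C^\times f_{\ga_1+3\ga_2}$, $\mathrm{ad}(e_{\ga_1})^2f_{2\ga_1+3\ga_2}=0$ (the weight $-3\ga_2$ is not a root), $[e_{\ga_1},f_{\ga_1+3\ga_2}]=0$, and $e_{\ga_1}v_M=0$, one gets $e_{\ga_1}^{r+1}(f_{2\ga_1+3\ga_2}t^2)^{r+1}v_M=c\,(f_{\ga_1+3\ga_2}t^2)^{r+1}v_M$ with $c\neq 0$, so the second relation follows immediately from the first.
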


\begin{proof}
 By Lemma \ref{Lemma:rel1} and (\ref{eq:Lemma(ii)}), we have
 \[ 0 = e_{{\ga_2}}^{(3r+3)}(f_{{\ga_1}+3{\ga_2}}t)^{(2r+2)}v_M = (f_{2{\ga_1}+3{\ga_2}}t^2)^{(r+1)}v_M,
 \]
 and hence $(f_{2{\ga_1}+3{\ga_2}}t^2)^{r+1}v_M=0$ follows. From this we see that
 \[ 0= e_{{\ga_1}}^{r+1}(f_{2{\ga_1}+3{\ga_2}}t^2)^{r+1}v_M = c(f_{{\ga_1}+3{\ga_2}}t^2)^{r+1}v_M
 \]
 with some nonzero $c$. Hence $(f_{{\ga_1}+3{\ga_2}}t^2)^{r+1}v_M=0$ also holds.
\end{proof}

\begin{Lem}\label{Lemma:rel2}
 The relation $(f_{{\ga_1}+2{\ga_2}}t)^{3r+1}v_M =0$ holds.
\end{Lem}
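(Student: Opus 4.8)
The statement is exactly the relation $x_\ggg^{\rho(\ggg)+1}v_M=0$ for the positive real root $\ggg=-(\ga_1+2\ga_2)+\gd$ (recall $\rho(\ggg)=3r$ when $s=0$), so this lemma supplies the last of the five power relations in Proposition \ref{Prop:annihilation}. The plan is to derive it from the already established relation $(f_{\ga_1+3\ga_2}t)^{2r+1}v_M=0$ of Lemma \ref{Lemma:rel1}, by the same raising-operator computation used in the two preceding lemmas.

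First I would apply $e_{\ga_2}^{(3r+1)}$ to the vector $(f_{\ga_1+3\ga_2}t)^{(3r+1)}v_M$, which is zero because $3r+1\ge 2r+1$. Feeding this into the divided-power identity (\ref{eq:Lemma(ii)}) with $p=q=3r+1$, and observing that the correction terms lie in $U(\fg)\big(\C e_{\ga_2}\oplus\C f_{\ga_1}t\oplus\C f_{\ga_1+\ga_2}t\big)$, which annihilates $v_M$ by the defining relations $e_{\ga_2}v_M=(f_{\ga_1}t)v_M=(f_{\ga_1+\ga_2}t)v_M=0$, yields
\[ \sum_{i=0}^{r}(f_{2\ga_1+3\ga_2}t^2)^{(i)}(f_{\ga_1+3\ga_2}t)^{(i)}(f_{\ga_1+2\ga_2}t)^{(3r+1-3i)}v_M=0. \]
The $i=0$ summand is exactly the desired vector $(f_{\ga_1+2\ga_2}t)^{(3r+1)}v_M$, so it remains to kill the cross terms with $i\ge 1$.

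Here I would use two structural facts. The three lowering operators $f_{2\ga_1+3\ga_2}$, $f_{\ga_1+3\ga_2}$, $f_{\ga_1+2\ga_2}$ commute pairwise (all brackets vanish for weight reasons). And Lemma \ref{Lemma:key}(i), together with the vanishing of the correction terms on $v_M$ and the fact that $e_{\ga_2}$ commutes with $f_{2\ga_1+3\ga_2}$, gives the recursion
\[ e_{\ga_2}\big((f_{2\ga_1+3\ga_2}t^2)^{(a)}(f_{\ga_1+2\ga_2}t)^{(c)}v_M\big)=3(a+1)(f_{2\ga_1+3\ga_2}t^2)^{(a+1)}(f_{\ga_1+2\ga_2}t)^{(c-2)}v_M. \]
Iterating from $(a,c)=(0,3r+1)$ shows that $e_{\ga_2}^{r+1}$ sends the target to a nonzero multiple of $(f_{2\ga_1+3\ga_2}t^2)^{(r+1)}(f_{\ga_1+2\ga_2}t)^{(r-1)}v_M$, which vanishes by $(f_{2\ga_1+3\ga_2}t^2)^{r+1}v_M=0$. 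Thus the target is annihilated by $e_{\ga_2}^{r+1}$; it is also annihilated by $e_{\ga_1}$, since $e_{\ga_1}$ commutes with $f_{\ga_1+2\ga_2}$ and $e_{\ga_1}v_M=0$.

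The hard part will be the final passage from this annihilation data to $(f_{\ga_1+2\ga_2}t)^{(3r+1)}v_M=0$. The target has weight $k\go_1-\go_2$, with $\langle\ga_2^\vee,k\go_1-\go_2\rangle=-1$ but $\langle\ga_1^\vee,k\go_1-\go_2\rangle=k\ge 0$; a single step of $\ga_2$-integrability therefore reduces the problem to the auxiliary relation $(f_{2\ga_1+3\ga_2}t^2)(f_{\ga_1+2\ga_2}t)^{3r-1}v_M=0$, but the $\ga_2$-string through the target immediately climbs into nonnegative weights where $e_{\ga_2}$ ceases to be injective, and the nonnegativity of the $\ga_1$-pairing likewise blocks a direct $\ga_1$-argument. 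I expect to close this gap using the remaining relation $f_{\ga_1+3\ga_2}t^2(f_{\ga_1+2\ga_2}t)^{3r-2}v_M=0$ of Proposition \ref{Prop:annihilation}: applying $e_{\ga_1}$ converts the single $f_{2\ga_1+3\ga_2}t^2$ into $f_{\ga_1+3\ga_2}t^2$, and that relation (extended to higher powers of $f_{\ga_1+2\ga_2}t$ by left multiplication by the commuting $f_{\ga_1+2\ga_2}t$) forces the auxiliary vector to be $e_{\ga_1}$-highest-weight. Combining this with the $\ga_2$-string data through a downward induction on the $f_{2\ga_1+3\ga_2}t^2$-degree, while treating the $\go_1$-direction (the dependence on $k$) separately, is where the genuine bookkeeping lies.
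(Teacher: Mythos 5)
Your opening computation is correct and coincides with the $p=3r+1$ instance of the paper's argument: applying $e_{\ga_2}^{(3r+1)}$ to $(f_{\ga_1+3\ga_2}t)^{(3r+1)}v_M=0$ (Lemma \ref{Lemma:rel1}) via (\ref{eq:Lemma(ii)}) does give $\sum_{i=0}^{r}(f_{2\ga_1+3\ga_2}t^2)^{(i)}(f_{\ga_1+3\ga_2}t)^{(i)}(f_{\ga_1+2\ga_2}t)^{(3r+1-3i)}v_M=0$, and your recursion from Lemma \ref{Lemma:key}(i) is valid. But eliminating the cross terms $i\ge 1$ is the entire content of the lemma, and the scheme you sketch cannot close. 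The target $w=(f_{\ga_1+2\ga_2}t)^{(3r+1)}v_M$ has weight $k\go_1-\go_2$, so integrability forces $w=0$ only once you know $e_{\ga_2}w=0$; but $e_{\ga_2}w$ is (a nonzero multiple of) your auxiliary vector $(f_{2\ga_1+3\ga_2}t^2)(f_{\ga_1+2\ga_2}t)^{3r-1}v_M$, so the auxiliary relation is \emph{equivalent} to the lemma, not a reduction of it, and the facts $e_{\ga_2}^{r+1}w=0$, $e_{\ga_1}w=0$ give no traction at $\ga_2$-weight $-1$. Worse, the endpoint of your plan --- showing the auxiliary vector is annihilated by $\fn_+$ --- would not imply it vanishes: its weight is $(k-1)\go_1+\go_2$, which is dominant, and $M(\gl)\cong L(m)$ contains many nonzero $\fn_+$-singular vectors of dominant weight, one for each simple summand in Theorem \ref{polyhedral multiplicity formula}. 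There is also a circularity hazard: the relation $f_{\ga_1+3\ga_2}t^2(f_{\ga_1+2\ga_2}t)^{3r-2}v_M=0$ you invoke is itself part of Proposition \ref{Prop:annihilation} and is proved in the paper \emph{after} this lemma (by an argument modeled on it).

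The idea you are missing is that a single relation in the $r+1$ unknowns $u_i=(f_{2\ga_1+3\ga_2}t^2)^{(i)}(f_{\ga_1+3\ga_2}t)^{(i)}(f_{\ga_1+2\ga_2}t)^{3r+1-3i}v_M$ cannot isolate $u_0$; the paper produces $r+1$ relations by letting $p$ run over $2r+1\le p\le 3r+1$ in $e_{\ga_2}^{(p)}(f_{\ga_1+3\ga_2}t)^{(p)}v_M=0$ and left-multiplying each by $(f_{\ga_1+2\ga_2}t)^{3r+1-p}$ (legitimate because $f_{\ga_1+2\ga_2}t$ commutes with the other two lowering operators), so that all relations live in the same weight space. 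The resulting coefficient matrix $\bigl(\frac{1}{(3r+1-3i-j)!}\bigr)_{0\le i,j\le r}$ is then shown to be invertible by a polynomial trick: a null vector $(v_0,\ldots,v_r)$ would give $f(x)=\sum_i \frac{v_i}{(3r+1-3i)!}x^{3r+1-3i}$ with $f^{(j)}(1)=0$ for $0\le j\le r$, and since every exponent is congruent to $1$ modulo $3$ one has $f(\zeta x)=\zeta f(x)$ for $\zeta$ a primitive cube root of unity, forcing $(x^3-1)^{r+1}\mid f$, which is impossible in degree $3r+1$ unless $f=0$. This kills all the $u_i$ simultaneously, in particular $u_0=(f_{\ga_1+2\ga_2}t)^{3r+1}v_M$. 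Some such simultaneous-elimination argument is indispensable here; raising-operator strings alone stall exactly where you observed they do.
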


\begin{proof}
 By Lemma \ref{Lemma:rel1} and (\ref{eq:Lemma(ii)}), we have for $p \geq 2r+1$ that
 \begin{align*} 
   0 = e_{\ga_2}^{(p)}(f_{{\ga_1}+3{\ga_2}}t)^{(p)}v_M
     = \sum_{i=0}^{\lfloor p/3\rfloor} \frac{1}{(p-3i)!}(f_{2{\ga_1}+3{\ga_2}}t^2)^{(i)}
    (f_{{\ga_1}+3{\ga_2}}t)^{(i)}(f_{{\ga_1}+2{\ga_2}}t)^{p-3i}v_M.
 \end{align*}
 When $2r+1 \le p \le 3r+1$, by multiplying $(f_{{\ga_1}+2{\ga_2}}t)^{3r+1-p}$ to this
 we obtain $r$ linear relations
 \[ \sum_{i=0}^{\lfloor p/3\rfloor} \frac{1}{(p-3i)!} (f_{2{\ga_1}+3{\ga_2}}t^2)^{(i)}
    (f_{{\ga_1}+3{\ga_2}}t)^{(i)}(f_{{\ga_1}+2{\ga_2}}t)^{3r+1-3i}v_M=0.
 \]
 Hence in order to prove $(f_{{\ga_1}+2{\ga_2}}t)^{3r+1}v_M =0$, it is enough to show that the matrix 
 $A=(a_{ij})_{0\leq i,j\leq r}$ with
 \[ a_{ij} = \begin{cases}
                \frac{1}{(3r+1-3i-j)!} & \text{if} \ 3r+1-3i-j \geq 0,\\
                0                 & \text{otherwise}
             \end{cases}
 \]
 is invertible.
 Assume that $v_0,v_1,\ldots,v_r$ satisfy $\sum a_{ij} v_j = 0$ for all $i$, and consider the polynomial 
 \begin{align*}
  f(x) = &\frac{v_0}{(3r+1)!}x^{3r+1} + \frac{v_1}{(3r-2)!}x^{3r-2}
     + \cdots + \frac{v_i}{(3r+1-3i)!}x^{3r+1-3i} + \cdots + \frac{v_{r}}{1!}x.
 \end{align*}
 Then $\frac{d^jf}{dx^j}(1)=0$ holds for all $0\leq j \leq r$, 
 which implies that $f(x)$ is divisible by $(x-1)^{r+1}$.
 Since $f(\zeta x) =\zeta f(x)$ holds where $\zeta$ is a third primitive root of unity, 
 we see that $f(x)$ is divisible by $(x^3-1)^{r+1}$.
 By the degree consideration we have $f(x) =0$, and the proof is complete.
\end{proof}

Now the following lemma completes the proof of Proposition \ref{Prop:annihilation}.
\begin{Lem}
 The relation $f_{{\ga_1}+3{\ga_2}}t^2(f_{{\ga_1}+2{\ga_2}}t)^{3r-2}v_M =0$ holds 
 when $r\neq 0$.
\end{Lem}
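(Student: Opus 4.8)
The plan is to establish the final relation $f_{\alpha_1+3\alpha_2}t^2(f_{\alpha_1+2\alpha_2}t)^{3r-2}v_M = 0$ by combining the previously established annihilation relations together with one more application of the key commutator identity from Lemma \ref{Lemma:key}. The strategy mirrors the proofs of Lemma \ref{Lemma:rel1} and Lemma \ref{Lemma:rel2}: I would start from a vector that is already known to be annihilated, apply a suitable divided power of $e_{\alpha_2}$ (or of some raising operator), and read off the desired relation from the resulting expansion given by (\ref{eq:Lemma(ii)}).

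Concretely, the first step is to locate the correct starting point. The monomial $(f_{\alpha_1+3\alpha_2}t)^{(i)}(f_{\alpha_1+2\alpha_2}t)^{p-3i}$ appearing in the expansion of $e_{\alpha_2}^{(p)}(f_{\alpha_1+3\alpha_2}t)^{(p)}v_M$ already produced, at $p=2r+1,\ldots,3r+1$, the $r$ relations used in Lemma \ref{Lemma:rel2}. To obtain a relation involving $f_{\alpha_1+3\alpha_2}t^2$ rather than $(f_{\alpha_1+3\alpha_2}t)^{(i)}$, I would instead expand $e_{\alpha_2}^{(p)}(f_{\alpha_1+3\alpha_2}t)^{(q)}$ with $q$ slightly larger than $p$, so that the $i=1$ term of (\ref{eq:Lemma(ii)}) contributes exactly one factor of $f_{2\alpha_1+3\alpha_2}t^2$ and one factor of $f_{\alpha_1+3\alpha_2}t$, and then apply $e_{\alpha_1}$ to convert $f_{2\alpha_1+3\alpha_2}t^2$ into $f_{\alpha_1+3\alpha_2}t^2$ (as was done at the end of the Lemma establishing $(f_{\alpha_1+3\alpha_2}t^2)^{r+1}v_M=0$). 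The weight bookkeeping should force all but the targeted term to vanish, using the already-proved relations $(f_{2\alpha_1+3\alpha_2}t^2)^{r+1}v_M=0$, $(f_{\alpha_1+3\alpha_2}t^2)^{r+1}v_M=0$, and $(f_{\alpha_1+2\alpha_2}t)^{3r+1}v_M=0$.

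The main obstacle I anticipate is isolating the single monomial $f_{\alpha_1+3\alpha_2}t^2(f_{\alpha_1+2\alpha_2}t)^{3r-2}$ cleanly: the expansion (\ref{eq:Lemma(ii)}) produces a whole family of terms indexed by $i$, and the previously derived relations kill the high-$i$ terms but not automatically the unwanted low-$i$ terms. I expect this will again require a small linear-algebra argument of the same flavour as the invertibility of the matrix $A$ in Lemma \ref{Lemma:rel2}: one writes down the several linear relations obtained by varying $p$ (and possibly multiplying by appropriate powers of $f_{\alpha_1+2\alpha_2}t$), assembles them into a coefficient matrix, and shows that matrix is invertible by a generating-function argument. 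Since the target monomial carries $t$-degree $2+(3r-2)=3r$ and a specific weight, I would use the weight grading and the $t$-grading to restrict drastically which monomials can appear, thereby cutting the system down to a manageable size.

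An alternative, and possibly cleaner, route is to argue indirectly via the isomorphism already under construction. Since the surjections $M(\gl)\twoheadrightarrow L(m)\twoheadrightarrow T(\gl)$ are established independently of this last relation, and since Lemma \ref{Lem:Ann} shows that $f_{\alpha_1+3\alpha_2}t^2(f_{\alpha_1+2\alpha_2}t)^{3r-2}$ annihilates $v_T$, one could deduce the relation in $M(\gl)$ provided one already knew the reverse surjection. However, that reasoning is circular for the present purpose, so I would keep the direct computational proof as the primary argument and treat the representation-theoretic comparison only as a consistency check. The cleanest direct version, I expect, is to take the relation $(f_{\alpha_1+3\alpha_2}t)^{2r+1}v_M=0$ from Lemma \ref{Lemma:rel1}, multiply by a power of $f_{\alpha_1+2\alpha_2}t$, and apply a single $e_{\alpha_2}$; the identity in Lemma \ref{Lemma:key}(i) then injects exactly one $f_{2\alpha_1+3\alpha_2}$, after which $e_{\alpha_1}$ yields $f_{\alpha_1+3\alpha_2}$, and the final relation follows once the extraneous terms are eliminated using $(f_{\alpha_1+3\alpha_2}t^2)^{r+1}v_M=0$ and the $t$-degree/weight constraints.
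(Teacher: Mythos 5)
Your overall skeleton (use the expansion (\ref{eq:Lemma(ii)}) together with the relations from Lemmas \ref{Lemma:rel1}--\ref{Lemma:rel2}, then a matrix-invertibility argument as in Lemma \ref{Lemma:rel2}) is the right kind of argument, but your proposal as written contains a step that fails: the parameter direction in your main route is wrong. You propose expanding $e_{\ga_2}^{(p)}(f_{\ga_1+3\ga_2}t)^{(q)}$ with ``$q$ slightly larger than $p$'' and then converting $f_{2\ga_1+3\ga_2}t^2$ into $f_{\ga_1+3\ga_2}t^2$ with $e_{\ga_1}$. But for $q \ge p$ the $i$-th term of (\ref{eq:Lemma(ii)}) carries the factor $(f_{\ga_1+3\ga_2}t)^{(q-p+i)}$, so after the $e_{\ga_1}$-conversion every surviving monomial ($i\ge 1$) still contains at least one factor $f_{\ga_1+3\ga_2}t$; a weight count confirms the target $f_{\ga_1+3\ga_2}t^2(f_{\ga_1+2\ga_2}t)^{3r-2}v_M$ can never appear in such a relation, so no matrix argument can extract it. The correct choice is $q=p-1$ (e.g.\ $p=3r+1$, $q=3r$, which uses $r\neq 0$ so that $(f_{\ga_1+3\ga_2}t)^{(3r)}v_M=0$): then $0=e_{\ga_2}^{(p)}(f_{\ga_1+3\ga_2}t)^{(p-1)}v_M=\sum_{i\ge 1}(f_{2\ga_1+3\ga_2}t^2)^{(i)}(f_{\ga_1+3\ga_2}t)^{(i-1)}(f_{\ga_1+2\ga_2}t)^{(p-3i)}v_M$, and running $p$ over $2r+2,\ldots,3r+1$ (multiplying by powers of $f_{\ga_1+2\ga_2}t$, which commutes with the other factors) gives an $r\times r$ system with entries $1/(p-3i)!$ whose invertibility follows by the same $(x^3-1)$-divisibility trick; this yields $f_{2\ga_1+3\ga_2}t^2(f_{\ga_1+2\ga_2}t)^{3r-2}v_M=0$, and one application of $e_{\ga_1}$ then gives the claim. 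Your ``cleanest direct version'' (a single $e_{\ga_2}$ applied to $(f_{\ga_1+2\ga_2}t)^{3r+1}v_M=0$ via Lemma \ref{Lemma:key}(i)) also falls short: it only produces $f_{2\ga_1+3\ga_2}t^2(f_{\ga_1+2\ga_2}t)^{3r-1}v_M=0$, one power too many, so the linear system is unavoidable.

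For comparison, the paper's proof proceeds differently and avoids the $q\neq p$ bookkeeping altogether. It first applies $e_{\ga_1+3\ga_2}$ to $(f_{\ga_1+3\ga_2}t)^{(p+2)}v_M=0$: the commutators produce $(\ga_1+3\ga_2)^\vee t$, which kills $v_M$ by the relation $t\fh[t]v_M=0$ (a relation your sketch never invokes) and contributes $-2f_{\ga_1+3\ga_2}t^2$ when moved past the remaining factors, giving $(f_{\ga_1+3\ga_2}t)^{(p)}f_{\ga_1+3\ga_2}t^2\,v_M=0$ for all $p\ge 2r-1$. It then observes that the vector $f_{\ga_1+3\ga_2}t^2\,v_M$ is annihilated by $e_{\ga_2}$, $f_{\ga_1}t$ and $f_{\ga_1+\ga_2}t$, so the congruence (\ref{eq:Lemma(ii)}) may be applied with this vector playing the role of $v_M$, and the resulting family of relations $\sum_i \frac{1}{(p-3i)!}f_{\ga_1+3\ga_2}t^2(f_{2\ga_1+3\ga_2}t^2)^{(i)}(f_{\ga_1+3\ga_2}t)^{(i)}(f_{\ga_1+2\ga_2}t)^{p-3i}v_M=0$ is resolved exactly as in Lemma \ref{Lemma:rel2}. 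In short: your plan is salvageable and, once corrected to $q=p-1$, gives a genuinely different first step from the paper's Garland-type maneuver, but as stated it would not isolate the target monomial, and it misses the two ingredients the paper actually uses (the $e_{\ga_1+3\ga_2}$-application relying on $t\fh[t]v_M=0$, and the observation that the expansion lemma applies relative to the new vector $f_{\ga_1+3\ga_2}t^2\,v_M$).
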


\begin{proof}
 Let $p \geq 2r-1$.
 By Lemma \ref{Lemma:rel1}, we have
 \begin{align}\label{eq:Garland}
   0&=e_{{\ga_1}+3{\ga_2}}(f_{{\ga_1}+3{\ga_2}}t)^{(p+2)}v_M = \frac{1}{(p+2)!}\sum_{i=0}^{p+1} (f_{{\ga_1}+3{\ga_2}}t)^{p-i+1}
    (\ga_1+3\ga_2)^{\vee}t (f_{\ga_1+3\ga_2}t)^{i}v_M\nonumber\\ 
    & = \frac{1}{(p+2)!}\sum_{i=0}^{p+1} -2i
    (f_{{\ga_1}+3{\ga_2}}t)^{p} f_{{\ga_1}+3{\ga_2}}t^2v_M= -(f_{{\ga_1}+3{\ga_2}}t)^{(p)}f_{{\ga_1}+3{\ga_2}}t^2v_M.
 \end{align}
 We easily see that all the elements $e_{\ga_2}, f_{\ga_1}t, f_{\ga_1+\ga_2}t$ annihilate the vector 
 $f_{{\ga_1}+3{\ga_2}}t^2v_M$, and hence we have from (\ref{eq:Lemma(ii)}) and (\ref{eq:Garland}) that
 \begin{align*} 
  0&=e_{{\ga_2}}^{(p)}(f_{{\ga_1}+3{\ga_2}}t)^{(p)}f_{{\ga_1}+3{\ga_2}}t^2v_M \\
   &= \sum_{i=0}^{\lfloor p/3\rfloor}\frac{1}{(p-3i)!} f_{{\ga_1}+3{\ga_2}}t^2 (f_{2{\ga_1} + 3{\ga_2}}t^2)^{(i)}
     (f_{{\ga_1}+3{\ga_2}}t)^{(i)}(f_{{\ga_1}+2{\ga_2}}t)^{p-3i}.
 \end{align*}
 Now the lemma is proved by a similar argument as in the proof of Lemma \ref{Lemma:rel2}.
\end{proof}

\section{Proof of Theorem \ref{polyhedral multiplicity formula}}\label{Section:Polyhedral}

\subsection{A basis of the space of highest weight vectors}

For $\bm{a} = (a_1,a_2,a_3,a_4,a_5)\in \Z_+^5$, set
\[ \bf_{\ba}=(f_{2\ga_1+3\ga_2}t^2)^{(a_5)}(f_{\ga_1+3\ga_2}t^2)^{(a_4)}(f_{\ga_1+3\ga_2}t)^{(a_3)}
   (f_{\ga_1+2\ga_2}t)^{(a_2)}(f_{2\ga_1+3\ga_2}t)^{(a_1)},
\]
and  
\begin{align*}
  \wt(\ba) &= (2a_1+a_2+a_3+a_4+2a_5)\ga_1 + (3a_1+2a_2+3a_3+3a_4+3a_5)\ga_2\\
           &=(a_1 -a_3-a_4+a_5)\go_1 + (a_2 + 3a_3+3a_4)\go_2 \in Q_+.
\end{align*}
Note that $\wt(\bf_{\ba}) = - \wt(\ba)$.
In this section, we denote by $v$ a highest weight vector of $L(m)$.
Since $L(m) \cong M(\gl)$, we easily see from Proposition \ref{Prop:annihilation} and the PBW theorem that 
\[ L(m) = \sum_{\ba \in \Z_+^5} U(\fg)\bf_{\ba}v.
\]

Let $\ga \in Q_+$, and set $L(m)_{>\gl-\ga} = \bigoplus_{\mu > \gl-\ga} L(m)_\mu$.
The $\fg$-submodule $U(\fg)L(m)_{> \gl-\ga}$ of $L(m)$ coincides with the sum of simple $\fg$-submodules whose 
highest weights are larger than $\gl -\ga$.
Hence we see that the multiplicity of $V(\gl-\ga)$ in $L(m)$ is equal to the dimension of the weight space of the quotient 
$\fg$-module $L(m)\Big/U(\fg)L(m)_{>\gl-\ga}$
with weight $\gl-\ga$, that is
\[ \Big[L(m) : V(\gl-\ga)\Big] = \dim \Big(L(m)\Big/U(\fg)L(m)_{>\gl-\ga}\Big)_{\gl-\ga}.
\]
Therefore, in order to prove Theorem \ref{polyhedral multiplicity formula} 
it suffices to show the following proposition, which is proved in the next subsections.

\begin{Prop}\label{Proposition:basis}
 For every $\ga \in Q_+$, the projection images of $\{\bf_{\ba}v \mid \ba \in S_\gl, \ \wt(\ba) = \ga\}$ form a basis of
 $\Big(L(m)\Big/U(\fg)L(m)_{>\gl-\ga}\Big)_{\gl-\ga}$.
\end{Prop}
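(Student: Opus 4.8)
The plan is to prove Proposition \ref{Proposition:basis} by establishing the spanning and linear independence of the projected vectors separately, with the generating property coming essentially for free and the real work lying in controlling linear relations. First I would observe that by the decomposition $L(m) = \sum_{\ba \in \Z_+^5} U(\fg)\bf_{\ba}v$ established above, together with the fact that each $\bf_{\ba}v$ is a weight vector of weight $\gl - \wt(\ba)$, the quotient $\big(L(m)/U(\fg)L(m)_{>\gl-\ga}\big)_{\gl-\ga}$ is spanned by the projections of those $\bf_{\ba}v$ with $\wt(\ba) = \ga$. The issue is that $\ba$ ranges over all of $\Z_+^5$, not just $S_\gl$, so the first key step is to show that for $\ba \notin S_\gl$ the projection of $\bf_{\ba}v$ lies in the span of the projections of $\bf_{\bb}v$ with $\bb \in S_\gl$ (or vanishes). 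The natural mechanism for this is the relations from Proposition \ref{Prop:annihilation}: the four inequalities defining $S_\gl$ should correspond precisely to the annihilation relations $x_\ggg^{\rho(\ggg)+1}v_M = 0$ for the relevant long positive roots, so that whenever an exponent $a_i$ (or a linear combination like $a_1 - a_3 + a_5$, $2a_2+3a_3+3a_4$, $2a_2+3a_4+3a_5$) exceeds its bound, one of these relations forces a reduction modulo higher-weight terms.

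The second, and I expect the harder, part is linear independence of $\{\pr(\bf_{\ba}v) \mid \ba \in S_\gl, \ \wt(\ba) = \ga\}$. Here the plan is to exploit the isomorphism $L(m) \cong T(\gl)$ from Theorem \ref{isomorphism of modules}, which realizes $L(m)$ inside a tensor product of integrable highest weight $\hfg$-modules where weight multiplicities are computable. A clean route is to produce, for each $\ba \in S_\gl$ with $\wt(\ba) = \ga$, a "dual" functional — for instance an element of $U(\fn_+)$, built from the $e_\ggg$'s, that pairs nontrivially with $\bf_{\ba}v$ modulo $U(\fg)L(m)_{>\gl-\ga}$ and triangularly with respect to a suitable ordering on $S_\gl$. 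The computations in Lemma \ref{Lemma:key} and formula (\ref{eq:Lemma(ii)}), which track how $e_{\ga_2}^{(p)}$ acts on products of the root vectors $f_{\ga_1+3\ga_2}, f_{\ga_1+2\ga_2}, f_{2\ga_1+3\ga_2}$, are exactly the tools one needs: they show that the action of the raising operators on $\bf_{\ba}v$ is governed by the same combinatorial data (the indices $a_1,\dots,a_5$) that parametrize $S_\gl$, and should yield a unitriangular pairing matrix once an appropriate monomial order is fixed.

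Concretely, I would fix a total order on the five-tuples $\ba$ refining the partial order by the reduction moves of the first step, and then argue by downward induction on $\ga$ (equivalently on the weight) combined with the triangularity of the pairing. The projection to the top weight space $\gl - \ga$ kills all contributions of weight strictly above $\gl - \ga$, so when applying a raising operator $\be_{\bb} \in U(\fn_+)$ dual to $\bf_{\bb}$ one only needs to control the leading term, and the subleading terms either land in higher weight spaces (hence die under $\pr$) or correspond to $\bb' < \bb$ in the chosen order. Matching the exponents via (\ref{eq:Lemma(ii)}) then gives a nonzero diagonal pairing $\langle \be_{\ba}, \bf_{\ba}v\rangle \neq 0$ and strictly triangular off-diagonal terms, forcing the pairing matrix to be invertible and hence the family to be linearly independent.

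The main obstacle I anticipate is the bookkeeping in the second step: the five generators in $\bf_{\ba}$ do not commute, and the divided-power identities in Lemma \ref{Lemma:key} introduce cross terms (the $f_{2\ga_1+3\ga_2}$ factors arising from brackets) that mix the indices $a_3, a_4, a_5$. Choosing the monomial order so that these cross terms are genuinely lower-order, and verifying that the dual functionals $\be_{\ba}$ can be chosen to respect this order while still pairing nontrivially against the diagonal, is the delicate point. I would address it by building the $\be_{\ba}$ out of the same $e_{\ga_2}^{(p)}$ and $e_{\ga_1+3\ga_2}$-type operators used in Lemmas \ref{Lemma:rel1}--\ref{Lemma:rel2}, so that the triangularity is inherited directly from the already-established computations rather than re-derived, and by matching the count of $S_\gl$ against the known $\fg$-character of $T(\gl)$ as a consistency check that no elements of the basis have been over- or under-counted.
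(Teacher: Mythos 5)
Your spanning step contains a genuine gap: the four inequalities cutting out $S_\gl$ do \emph{not} correspond to the annihilation relations $x_\ggg^{\rho(\ggg)+1}v_M=0$ of Proposition \ref{Prop:annihilation}. Those relations only bound each exponent separately (e.g.\ $a_1\le k+2r+\gd_{s2}$ from $\rho(-(2\ga_1+3\ga_2)+\gd)=k+2r+\gd_{s2}$, $a_2\le 3r$, $a_3\le 2r$, $a_4,a_5\le r$), whereas $S_\gl$ is defined by the strictly stronger mixed inequalities $a_1\le k$, $a_1-a_3+a_5\le k$, $2a_2+3a_3+3a_4\le l$, $2a_2+3a_4+3a_5\le l$, which cannot be obtained by applying a single relation to a single monomial. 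The paper's reduction instead runs a lexicographic induction on a minimal spanning set $K[\ga]$ and derives each inequality by a different mechanism: $a_1\le k$ uses the three-dimensional Heisenberg subalgebra spanned by $f_{\ga_1}$, $f_{\ga_1+3\ga_2}t$, $f_{2\ga_1+3\ga_2}t$ together with \cite[Lemma 1.5]{MR2372556} and $f_{\ga_1}^{k+1}v=0$; $a_1-a_3+a_5\le k$ uses a reflection argument, showing that an auxiliary vector $\bf_{\bs}v$ has weight whose image under $s_1$ exceeds $\gl-\ga$, hence $\bf_{\bs}v\in U(\fg)L(m)_{>\gl-\ga}$ --- so the quotient by higher-weight components enters essentially, not just relations holding in $L(m)$; and $2a_2+3a_4+3a_5\le l$ requires a whole family of vectors $\bf_{\bs_j}v$, the expansion (\ref{eq:Lemma(ii)}), and an invertibility argument via the polynomial/Vandermonde trick of Lemma \ref{Lemma:rel2}. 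None of these ingredients is supplied by your ``one relation forces a reduction'' mechanism.

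Your linear-independence strategy is also not workable as sketched, and it diverges from the paper at the crucial point. The identities of Lemma \ref{Lemma:key} and (\ref{eq:Lemma(ii)}) from which you hope to ``inherit'' triangularity are in fact \emph{not} triangular: $e_{\ga_2}^{(p)}(f_{\ga_1+3\ga_2}t)^{(q)}$ spreads over all admissible $i$ with nonzero coefficients, and in Section \ref{Section:isom} the resulting square systems are inverted only through the $(x^3-1)^{r+1}$ divisibility argument; moreover these identities produce \emph{relations} among the $\bf_{\ba}v$, i.e.\ they serve the spanning half and give no lower bound on dimension. You would additionally have to show that your dual functionals annihilate $U(\fg)L(m)_{>\gl-\ga}$, which is exactly the hard point and is left unresolved. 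The paper instead proves independence by induction on $k$: the base case $k=0$ is the Kirillov--Reshetikhin case from \cite{MR2372556}, and for $k>0$ one embeds $L(m)\hookrightarrow L_1\otimes L_2$ with $L_1\cong V(\go_1)\oplus V(0)$ and $\bf_{\ba}v_1=0$ unless $\ba\in\{0,\be_1\}$, then strips coefficients in stages via the tailored projections $\kappa$ and $\rho$ and the splitting $S^0_\gl[\ga]=S^0_{\gl_2}[\ga]\sqcup S^{0,k}_\gl[\ga]$, with the final class killed by the observation $f_{\ga_1}v_1\neq 0$. Finally, your proposed consistency check against ``the known $\fg$-character of $T(\gl)$'' is circular: that character is precisely the content of Theorem \ref{polyhedral multiplicity formula}, which Proposition \ref{Proposition:basis} is being used to prove.
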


\subsection{The space is spanned by the vectors}

For $\ga \in Q_+$, set 
\[ \Z_+^5[\ga] = \{ \ba \in \Z_+^5 \mid \wt(\ba) = \ga\}, \ \ \ S_\gl[\ga] = S_\gl \cap \Z_+^5[\ga].
\]
In this subsection, we shall show the following.

\begin{Lem}\label{Lemma:span}
 For every $\ga \in Q_+$, the projection images of $\{\bf_{\ba}v\mid \ba \in S_\gl[\ga]\}$
 span the space $\Big(L(m)\Big/U(\fg)L(m)_{>\gl-\ga}\Big)_{\gl-\ga}$.
\end{Lem}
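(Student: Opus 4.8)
The plan is to show that for every $\ga \in Q_+$, any vector in the weight space $\big(L(m)/U(\fg)L(m)_{>\gl-\ga}\big)_{\gl-\ga}$ can be written as a linear combination of the projection images of $\{\bf_{\ba}v \mid \ba \in S_\gl[\ga]\}$. Since we already know from Proposition \ref{Prop:annihilation} and the PBW theorem that $L(m) = \sum_{\ba \in \Z_+^5} U(\fg)\bf_{\ba}v$, the weight space in question is spanned by the projection images of all $\bf_{\ba}v$ with $\wt(\ba) = \ga$ (the vectors $\bf_{\ba}v$ already have weight $\gl-\ga$, so applying $U(\fg)$ and then projecting can only produce, in weight $\gl-\ga$, a combination of the $\pr(\bf_{\ba}v)$ themselves, using that the $\fn_+$-part of $U(\fg)$ raises weight and any raise followed by the projection to weight $\gl - \ga$ vanishes unless it returns to weight $\gl - \ga$). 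So the task reduces to showing that the images of $\bf_{\ba}v$ for $\ba \in \Z_+^5[\ga] \setminus S_\gl[\ga]$ lie in the span of the images for $\ba \in S_\gl[\ga]$.

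First I would make precise how a vector $\ba \notin S_\gl$ fails the defining inequalities of $S_\gl$: one of the four conditions $a_1 \le k$, $a_1 - a_3 + a_5 \le k$, $2a_2 + 3a_3 + 3a_4 \le l$, $2a_2 + 3a_4 + 3a_5 \le l$ is violated. For each type of violation I would produce a \emph{straightening relation} expressing $\bf_{\ba}v$ modulo $U(\fg)L(m)_{>\gl-\ga}$ in terms of $\bf_{\bb}v$ with $\bb$ ``smaller'' in a suitable monomial order and still of weight $\ga$. The relations should come directly from the annihilation relations established in Section \ref{Section:isom}, namely $(f_{2\ga_1+3\ga_2}t)^{k+2r+1}v = 0$ is not quite what we need at fixed weight, but rather the divided-power analogues: $f_{\ga_1}^{k+1}v=0$, $(f_{\ga_1+3\ga_2}t)^{2r+1}v=0$, $(f_{\ga_1+2\ga_2}t)^{3r+1}v=0$, $(f_{2\ga_1+3\ga_2}t^2)^{r+1}v=0$, $(f_{\ga_1+3\ga_2}t^2)^{r+1}v=0$, together with the extra relation $f_{\ga_1+3\ga_2}t^2(f_{\ga_1+2\ga_2}t)^{3r-2}v=0$. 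The idea is that each forbidden region of $\Z_+^5$ is cut out precisely so that a monomial outside $S_\gl$ contains a sub-product matching (the leading term of) one of these vanishing relations; applying the relation and commuting factors past one another — each commutator either raises the weight, and hence lands in $L(m)_{>\gl-\ga}$ after we also move up an $e$, or produces a strictly lower monomial — lets us rewrite $\bf_{\ba}v$.

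The key structural point I would emphasize is that the four inequalities defining $S_\gl$ correspond to the four relevant vanishing relations, read off the weights: $a_1 \le k$ matches the bound coming from $(f_{2\ga_1+3\ga_2}t)$ paired with $f_{\ga_1}^{k+1}v=0$; $a_1-a_3+a_5\le k$ and the two $l$-inequalities encode the remaining relations. Concretely I would set up an induction over a well-chosen total order on $\Z_+^5[\ga]$ (for instance lexicographic in $(a_5,a_4,a_3,a_2,a_1)$ or its reverse, chosen so that each straightening strictly decreases the order), and for each $\ba \notin S_\gl$ exhibit the specific relation that lowers it, thereby completing the induction; the base case is that minimal elements already lie in $S_\gl$.

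The hard part will be the bookkeeping in the straightening relations: when I apply a vanishing relation such as $(f_{\ga_1+2\ga_2}t)^{3r+1}v=0$ inside a longer monomial $\bf_{\ba}$, I must commute the offending factor into position, and the commutators among the root vectors $f_{2\ga_1+3\ga_2}t^2$, $f_{\ga_1+3\ga_2}t^2$, $f_{\ga_1+3\ga_2}t$, $f_{\ga_1+2\ga_2}t$, $f_{2\ga_1+3\ga_2}t$ (which live in the abelian-up-to-lower-degree nilpotent $\hfn_-$) generate correction terms in strictly higher $t$-degree, i.e.\ of lower weight in the induction order, that must be controlled; simultaneously I must verify that all genuinely weight-raising corrections land in $U(\fg)L(m)_{>\gl-\ga}$ and so vanish after projection. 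Checking that every $\ba$ violating a given inequality really does contain the matching sub-product, and that the four regions together cover all of $\Z_+^5 \setminus S_\gl$, is the combinatorial core and the place where the explicit $G_2$ structure constants fixed in Section \ref{Section:isom} will be used repeatedly.
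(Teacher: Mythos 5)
Your reduction to showing that $\bf_{\ba}v$ for $\ba \in \Z_+^5[\ga]\setminus S_\gl[\ga]$ lies in the span of the others, and your induction over a lexicographic order, match the paper's skeleton (the paper builds the greedy spanning set $K[\ga]$ and proves $K[\ga]\subseteq S_\gl[\ga]$, which is the same thing), and your treatment of the first inequality $a_1\le k$ via $f_{\ga_1}^{k+1}v=0$ and the Heisenberg subalgebra spanned by $f_{\ga_1}$, $f_{\ga_1+3\ga_2}t$, $f_{2\ga_1+3\ga_2}t$ is exactly the paper's first case. But the central claim your plan rests on --- that each monomial violating one of the four inequalities ``contains a sub-product matching (the leading term of) one of the vanishing relations'' of Proposition \ref{Prop:annihilation}, so that straightening with weight-controlled commutator corrections handles everything --- is false. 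Those relations only bound single coordinates: $a_1\le k+2r$, $a_2\le 3r$, $a_3\le 2r$, $a_4\le r$, $a_5\le r$ (plus the binomial relation involving $f_{\ga_1+3\ga_2}t^2$). The second and fourth defining inequalities of $S_\gl$ are \emph{mixed}: for $\ba=(k,0,0,0,1)$ with $r\ge 1$ one has $a_1-a_3+a_5=k+1>k$, yet every individual exponent is strictly within its bound, so $\bf_{\ba}$ contains no vanishing sub-word and no straightening relation applies. Commutator bookkeeping, however careful, cannot produce these two inequalities from the annihilation relations alone.

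What is missing is the paper's actual mechanism for the mixed cases, which is not a straightening argument at all. For $r_1-r_3+r_5>k$, one relates $\bf_{\br}v$ to an auxiliary monomial via a raising operator, $e_{\ga_1}^{r_4}\bf_{\bs}v \in \C^\times\bf_{\br}v + \sum_{\ba<\br}\C\bf_{\ba}v$ with $\bs=\br-r_4\be_4+r_4\be_5$, and then shows $\bf_{\bs}v\in U(\fg)L(m)_{>\gl-\ga}$ by a pure weight/Weyl-group argument: $s_1\wt(\bf_{\bs}v)>\gl-\ga$, and since the weights of each irreducible constituent are $W$-invariant, every constituent of $L(m)$ containing the weight $\wt(\bf_{\bs}v)$ has highest weight $>\gl-\ga$. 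For $2r_2+3r_4+3r_5>l$ the argument is heavier still: one needs a whole family of monomials $\bf_{\bs_j}v\in U(\fg)L(m)_{>\gl-\ga}$, $0\le j\le r_3$, to which one applies $e_{\ga_2}^{(r_2+3r_5-3j)}$ via (\ref{eq:Lemma(ii)}), producing a system of linear relations among the vectors $\bf_{\br+i(3\be_2-\be_3-\be_5)}v$; extracting $\bf_{\br}v$ then requires inverting a binomial-coefficient matrix, done by the polynomial-divisibility trick of Lemma \ref{Lemma:rel2} with $f(x)=v_0x^{r_5}+\cdots+v_{r_3}x^{r_5-r_3}$ (and even the third inequality $2r_2+3r_3+3r_4\le l$ is handled by the analogous $e_{\ga_2}$-relation argument from the Kirillov--Reshetikhin case, not by sub-word matching). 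Neither the reflection argument nor the matrix-inversion step appears in your proposal, so the proof as planned cannot be completed for the regions cut out by the two mixed inequalities.
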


We denote by $\le$ the lexicographic order on $\Z_+^5$, that is, 
$(a_1,\ldots,a_5) < (b_1,\ldots,b_5)$ if and only if there exists $i$ such that 
$a_j = b_j$ for $j < i$ and $a_i < b_i$. 
Fix $\ga \in Q_+$.
Following \cite[Subsection 3.5]{MR2372556}, we define a finite sequence $\br_1,\ldots,\br_t$ of elements of $\Z_+^5[\ga]$
inductively as follows. 
Set $\br_1$ to be the least element (with respect to the lexicographic order) of $\Z_+^5[\ga]$ such that $\bf_{\br_1}v \notin 
U(\fg)L(m)_{>\gl-\ga}$.
Assume that $\br_1,\ldots,\br_{p}$ are defined.
We set $\br_{p+1}$ to be the least element of $\Z_+^5[\ga]$ such that 
\[ \bf_{\br_{p+1}}v \notin \sum_{i=1}^{p}\C\bf_{\br_i}v + U(\fg)L(m)_{>\gl-\ga}
\]
if such an element exists, and otherwise we set $t = p$.

Set $K[\ga] = \{\br_1,\ldots,\br_t\}$.
By the definition the projection images of $\{\bf_{\ba}v \mid \ba \in K[\ga]\}$
 span $\Big(L(m)\Big/U(\fg)L(m)_{>\gl-\ga}\Big)_{\gl-\ga}$,
and every $\br \in K[\ga]$ satisfies that
\begin{equation}\label{eq:condition_of_K}
 \bf_{\br}v \notin \sum_{\begin{smallmatrix} \ba \in \Z_+^5[\ga], \\ \ba < \br\end{smallmatrix}} 
 \C \bf_{\ba}v + U(\fg)L(m)_{>\gl-\ga}.
\end{equation}
It is enough to show that every $\br = (r_1,\ldots,r_5) \in K[\ga]$ satisfies
\[ r_1 \leq k, \ \ \ r_1 - r_3 + r_5 \leq k, \ \ \ 2r_2 + 3r_3 + 3r_4 \leq l,\ \ \ 2r_2+3r_4+3r_5 \leq l,
\]
since this implies $K[\ga] \subseteq S_\gl[\ga]$.

Fix $\br = (r_1,\ldots,r_5) \in K[\ga]$, and first assume that $r_1 > k$.
The Lie subalgebra of $\fg[t]$ spanned by $f_{\ga_1}$, $f_{\ga_1+3\ga_2}t$, and $f_{2\ga_1+3\ga_2}t$ is isomorphic to
the $3$-dimensional Heisenberg algebra. 
Then \cite[Lemma 1.5]{MR2372556} and $f_{\ga_1}^{k+1}v=0$ imply that
\[ (f_{\ga_1+3\ga_2}t)^{r_3}(f_{2\ga_1+3\ga_2}t)^{r_1}v\in 
   \sum_{0<p, 0\leq q, 0\leq s \leq k} \C f_{\ga_1}^p (f_{\ga_1+3\ga_2}t)^q (f_{2\ga_1+3\ga_2}t)^sv.   
\]
From this we easily see that 
\[ \bf_{\br}v \in \sum_{\begin{smallmatrix}\ba \in \Z_+^5[\ga], \\ \ba<\br
     \end{smallmatrix}} \C\bf_{\ba} v + U(\fg)L(m)_{>\gl-\ga},
\]
which contradicts (\ref{eq:condition_of_K}).

Next assume that $r_1 - r_3 + r_5 > k$.
Let $\be_i$ ($1\le i \le 5$) denote the standard basis of $\Z^5$,
and set $\bs = \br -r_4\be_4 + r_4\be_5$.
We easily see that 
\begin{equation}\label{eq:containment}
 e_{\ga_1}^{r_4} \bf_{\bs}v \in \C^\times\bf_{\br}v + \sum_{\begin{smallmatrix} \ba \in \Z_+^5[\ga], \\ 
 \ba < \br\end{smallmatrix}} \C \bf_{\ba}v.
\end{equation} 
Note that
\[ \wt(\bf_{\bs}v) = \gl - \ga - r_4\ga_1 = (k-r_1 + r_3 - r_4-r_5)\go_1 + (l-r_2-3r_3)\go_2,
\]
and hence we have 
\[ s_1\wt(\bf_{\bs}v) = \gl -\ga + (r_1 - r_3 + r_5-k)\ga_1 > \gl -\ga,
\]
which implies $\bf_{\bs}v \in U(\fg)L(m)_{>\gl - \ga}$.
Then this and (\ref{eq:containment}) contradict (\ref{eq:condition_of_K}).

The inequality $2r_2+3r_3+3r_4 \leq l$ is proved in the same way as in \cite[Subsection 3.5]{MR2372556}.

Finally assume that $2r_2+3r_4+3r_5 > l$.
Then $r_5 > r_3$ follows, since otherwise we have
$2r_2 + 3r_4 + 3r_5 \leq 2r_2 + 3r_3 + 3r_4 \leq l$.
Set 
\[ \bs_j = (r_1,0,r_2+r_3+2r_5 -2j, r_4,j) \ \ \ \text{for} \ 0\leq j \leq r_3.
\]
We have
\[ \wt(\bf_{\bs_j}v) = \gl - \ga - (r_2 + 3r_5-3j)\ga_2, \ \ \ \langle \wt(\bf_{\bs_j}v), \ga_2^\vee\rangle
   = l - 3r_2 -3r_3 -3r_4 - 6r_5 +6j.
\]
Then by a similar argument as in the proof of $r_1 - r_3 + r_5 \leq k$, we can show that
\begin{equation}\label{eq:sj}
 \bf_{\bs_j}v \in U(\fg)L(m)_{>\gl-\ga} \ \ \ \text{for all} \ 0\leq j \leq r_3.
\end{equation}
It follows from (\ref{eq:Lemma(ii)}) that 
\begin{align*}
 e_{\ga_2}^{(r_2+3r_5-3j)}\bf_{\bs_j}v&= \sum_{i=\max\{0,r_5-r_3-j\}}^{r_5-j+\lfloor r_2/3\rfloor}\begin{pmatrix} i+j \\ j 
 \end{pmatrix}\bf_{(r_1,r_2+3r_5-3i-3j,r_3-r_5+i+j,r_4,i+j)}v\\
 &= \sum_{i=-\lfloor r_2/3\rfloor}^{\min\{r_5-j,r_3\}} \begin{pmatrix} r_5-i \\ j \end{pmatrix}
 \bf_{(r_1,r_2+3i,r_3-i,r_4,r_5-i)}v\\
 &\in \sum_{i=0}^{\min\{r_5-j,r_3\}} \begin{pmatrix} r_5-i \\ j \end{pmatrix} \bf_{\br+i(3\be_2-\be_3-\be_5)}v
 + \sum_{\begin{smallmatrix} \ba \in \Z_+^5[\ga], \\ \ba < \br\end{smallmatrix}} 
 \C \bf_{\ba}v,
\end{align*}
and then by (\ref{eq:sj}) we have for every $0\le j \le r_3$ that
\[ \sum_{i = 0}^{\min\{r_5-j,r_3\}} \begin{pmatrix} r_5-i \\ j \end{pmatrix} \bf_{\br+i(3\be_2-\be_3-\be_5)}v
   \in \sum_{\begin{smallmatrix} \ba \in \Z_+^5[\ga], \\ \ba < \br\end{smallmatrix}} 
   \C \bf_{\ba}v + U(\fg)L(m)_{>\gl-\ga}.
\]
From this we can show that
\[ \bf_{\br}v \in \sum_{\begin{smallmatrix} \ba \in \Z_+^5[\ga], \\ \ba < \br\end{smallmatrix}} 
   \C \bf_{\ba}v + U(\fg)L(m)_{>\gl-\ga}
\]
by a similar argument as in Lemma \ref{Lemma:rel2}, in which we use a polynomial
\[ f(x) = v_0x^{r_5} + v_1x^{r_5-1} + \cdots +v_{r_3}x^{r_5-r_3}
\]
instead. 
Now this contradicts (\ref{eq:condition_of_K}).

\subsection{Linearly independence}

Proposition \ref{Proposition:basis} is proved from the following lemma, together with Lemma \ref{Lemma:span}.

\begin{Lem}\label{Lem:linearly_independence}
 For every $\ga \in Q_+$, the images of $\{\bf_{\ba}v\mid \ba \in S_\gl[\ga]\}$ under the canonical projection
 $L(m) \twoheadrightarrow L(m)/U(\fg)L(m)_{>\gl-\ga}$ are linearly independent.
\end{Lem}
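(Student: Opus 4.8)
The plan is to establish linear independence by exhibiting a triangular structure relating the vectors $\bf_{\ba}v$ (for $\ba \in S_\gl[\ga]$) to a known basis, thereby showing that they cannot satisfy a nontrivial linear relation modulo $U(\fg)L(m)_{>\gl-\ga}$. By Proposition \ref{Proposition:basis} combined with Lemma \ref{Lemma:span}, we already know these vectors span the target weight space, so the cardinality $|S_\gl[\ga]|$ is an upper bound for its dimension; hence it suffices to produce a matching lower bound. The cleanest route is to compare with Theorem \ref{polyhedral multiplicity formula} itself, but since that theorem is what we are ultimately proving, I would instead argue independence intrinsically.

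First I would use the known dimension of the space from a \emph{different} model of $L(m)$. Since $L(m) \cong T(\gl)$ by Theorem \ref{isomorphism of modules}, and $T(\gl)$ is a generalized Demazure module whose character is computable (via the Demazure character formula and the tensor product structure established in Section \ref{Section:isom}), the multiplicity $[L(m):V(\gl-\ga)]$ is in principle known independently. If one can show directly that this multiplicity equals $|S_\gl[\ga]|$, then the spanning set from Lemma \ref{Lemma:span} must already be linearly independent, and the lemma follows immediately by a counting argument. This reduces the problem to a purely combinatorial identity between the Demazure-type character and the cardinality of the polyhedral set $S_\gl[\ga]$.

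Alternatively, and perhaps more robustly, I would prove independence by constructing explicit linear functionals (or a dual basis) that separate the vectors $\bf_{\ba}v$. The natural candidates are the operators $\be_{\ba}$ built from the corresponding raising generators $e_{2\ga_1+3\ga_2}t$, $e_{\ga_1+2\ga_2}t$, $e_{\ga_1+3\ga_2}t$, $e_{\ga_1+3\ga_2}t^2$, $e_{2\ga_1+3\ga_2}t^2$, applied in reverse order; one computes the pairing $\be_{\bb}\,\bf_{\ba}v$ and shows that, with respect to the lexicographic order on $S_\gl[\ga]$, the resulting matrix is triangular with nonzero diagonal entries. The key commutator identities needed for this computation are exactly those already assembled in Lemma \ref{Lemma:key} and equation (\ref{eq:Lemma(ii)}), together with the annihilation relations of Proposition \ref{Prop:annihilation}, which guarantee that the ``overflow'' terms land either in lower order or in $U(\fg)L(m)_{>\gl-\ga}$.

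The main obstacle I expect is controlling the cross terms: when $\be_{\bb}$ acts on $\bf_{\ba}v$, the generators do not commute, and expanding the product produces many terms with various $t$-degrees and root-space shifts. The constraints defining $S_\gl$ (namely $a_1 \le k$, $a_1 - a_3 + a_5 \le k$, $2a_2+3a_3+3a_4 \le l$, $2a_2+3a_4+3a_5 \le l$) are precisely what should force these cross terms to respect the triangular structure, so the delicate part is verifying that every off-diagonal contribution either vanishes by the integrability/annihilation relations or corresponds to a strictly smaller index in the chosen order. I would organize this verification by tracking the $\fh$-weight and $t$-degree of each summand, using the explicit $\rho(\ggg)$ values from Lemma \ref{Lem:Ann} to rule out contributions that would otherwise violate the triangularity.
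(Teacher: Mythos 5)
Your proposal does not contain a proof; it sketches two strategies, and each has a concrete gap at exactly the point where the work lies. For the counting route: the character of $T(\gl)$ is \emph{not} ``in principle known'' in the sense you need. $T(\gl)$ is a generalized Demazure module --- a $U(\hfb)$-submodule of a tensor product generated by a product of extremal weight vectors --- and such modules do not satisfy the ordinary Demazure character formula; the available tools (e.g.\ standard monomial theory for Bott--Samelson varieties, which the paper cites only for a vanishing condition in Lemma \ref{Lem:Ann}) do not hand you the identity $[L(m):V(\gl-\ga)]=|S_\gl[\ga]|$. That identity is precisely the content of Theorem \ref{polyhedral multiplicity formula}, and in the paper it is \emph{deduced from} this lemma, not available as input to it; so this route is circular as stated, and making it non-circular would require proving a combinatorial character identity at least as hard as the lemma itself.

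For the dual-pairing route there is a structural obstruction you have not addressed: the quotient is by $U(\fg)L(m)_{>\gl-\ga}$, which is a $\fg$-submodule but \emph{not} a $\fg[t]$-submodule. Your separating operators $\be_{\bb}$ are built from $e_\gamma\otimes t^p$ with $p>0$, which do not preserve $U(\fg)L(m)_{>\gl-\ga}$; a weight-$(\gl-\ga)$ vector lying in that submodule can be raised by such operators back into the line $\C v$ with nonzero coefficient. Hence the pairing $\be_{\bb}\,\bf_{\ba}v$ does not descend to the quotient, and triangularity of the naive matrix (even if verified, which you do not attempt) would not prove independence of the images. The paper's actual proof avoids both problems by inducting on $k$: the base case $k=0$ is the Kirillov--Reshetikhin case from Chari--Moura, and for $k>0$ one embeds $L(m)\hookrightarrow L_1\otimes L_2$ with $L_1$, $L_2$ the graded limits attached to $\go_1$ and $\gl-\go_1$, exploits the explicit decomposition $L_1\cong V(\go_1)\oplus V(0)$, and kills the coefficients in three stages (those with $a_1>0$, those in $S^0_{\gl_2}[\ga]$, then the boundary set $S^{0,k}_\gl[\ga]$) via projections tailored so that the maps \emph{do} factor through the relevant quotients, applying the induction hypothesis to $L_2$ at each stage. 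Some mechanism of this kind --- trading the single module for a tensor product where the problematic submodule becomes controllable --- is what your proposal is missing.
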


Fix $\ga \in Q_+$.
Let $\ol{L(m)} = L(m)/U(\fg)L(m)_{>\gl-\ga}$, and $\pr$ denote the canonical projection 
$L(m) \twoheadrightarrow \ol{L(m)}$.
We shall show the lemma by the induction on $k$.
The case $k=0$ is proved in \cite{MR2372556}.

Assume that $k>0$, and a sequence $\{c_{\ba}\}_{\ba \in S_\gl[\ga]}$ of complex numbers satisfies
\begin{equation}\label{eq:sum4}
 \sum_{\ba \in S_\gl[\ga]}c_{\ba}\pr(\bf_{\ba}v) = 0.
\end{equation}
First we shall show that
\begin{equation}\label{eq:sum2}
 c_{\ba} = 0 \ \text{for all} \ \ba \in S_\gl[\ga] \ \text{such that} \ a_1>0.
\end{equation}
Let $L_1$ and $L_2$ be the graded limits of minimal affinizations of $V_q(\go_1)$ and $V_q(\gl-\go_1)$ respectively, 
and $v_1,v_2$ be respective highest weight vectors.
Set $\gl_2=\gl-\go_1$.
It follows that
\[ L(m) \cong T(\gl) \hookrightarrow T(k\go_1) \otimes T(l\go_2) \hookrightarrow T(\go_1)\otimes T\big((k-1)\go_1\big) 
   \otimes T(l\go_2),
\]
and from this we see that $L(m) \cong U(\fg[t])(v_1 \otimes v_2) \subseteq L_1 \otimes L_2$.
It is known that 
\[ L_1 =U(\fg)v_1 \oplus U(\fg)\bf_{\be_1}v_1 \cong V(\go_1) \oplus V(0)
\]
as a $\fg$-module, and $\bf_{\ba}v_1 = 0$ if $\ba \notin \{0, \be_1\}$.

Let $\pr^1\colon L_1 \twoheadrightarrow V(0)$ be the projection with respect to the $\fg$-module decomposition,
and $\pr^2_{\gl-\ga}\colon L_2 \twoheadrightarrow L_2/U(\fg)(L_2)_{>\gl-\ga}$ the canonical projection. 
Since 
\[ (L_1\otimes L_2)_{>\gl-\ga} = \bigoplus_{\mu \in P} (L_1)_\mu \otimes (L_2)_{>\gl-\ga-\mu} \subseteq 
   V(0) \otimes (L_2)_{>\gl-\ga} \oplus V(\go_1)\otimes L_2,
\]
we have
\begin{align*}
 U(\fg)(L_1 \otimes L_2)_{> \gl-\ga} \subseteq V(0) \otimes U(\fg)(L_2)_{>\gl-\ga} \oplus V(\go_1) \otimes L_2.
\end{align*}
Hence the composition 
\[ \kappa\colon L(m) \hookrightarrow L_1 \otimes L_2 \stackrel{\pr^1\otimes \pr_{\gl-\ga}^2}{\twoheadrightarrow}
   V(0) \otimes \Big(L_2/U(\fg)(L_2)_{>\gl-\ga}\Big)
   \cong L_2/U(\fg)(L_2)_{>\gl-\ga}
\]
induces a $\fg$-module homomorphism $\ol{\kappa}\colon\ol{L(m)} \to L_2/U(\fg)(L_2)_{>\gl-\ga}$.
It is easily seen for $\ba = (a_1,\ldots,a_5)$ that 
\begin{align}\label{eq:sum3}
 \bf_{\ba}(v_1 \otimes v_2)= \begin{cases} v_1 \otimes \bf_{\ba}v_2 + \bf_{\be_1}v_1 \otimes \bf_{\ba-\be_1}v_2 & \text{if}
 \ a_1 > 0,\\
 v_1 \otimes \bf_{\ba}v_2 & \text{otherwise}.
 \end{cases}
\end{align}
Hence we see from the definition of $\kappa$ that (\ref{eq:sum4}) yields 
\begin{equation*}
 0=\ol{\kappa}\Big(\sum_{\ba \in S_\gl[\ga]} c_{\ba} \pr(\bf_{\ba}v)\Big) 
  = \sum_{\ba \in S_{\gl}[\ga]} c_{\ba} \kappa (\bf_{\ba}v)
 = \sum_{\ba \in S_\gl[\ga]:\, a_1 >0} c_{\ba} \pr^2_{\gl-\ga}(\bf_{\ba-\be_1}v_2).
\end{equation*}
Since $\gl-\ga = \gl_2-(\ga-\go_1)$ and $\{\ba - \be_1\mid \ba \in S_\gl[\ga], a_1 >0\} \subseteq S_{\gl_2}[\ga-\go_1]$, 
(\ref{eq:sum2}) follows from the induction hypothesis, as required.

Set 
\[ S^0_\gl[\ga] = \{\ba \in S_\gl[\ga]\mid a_1=0\} \ \ \text{and} \ \ S^{0,k}_\gl[\ga] = \{\ba \in S_\gl[\ga]\mid a_1=0,
   -a_3+a_5=k\} \subseteq S_\gl^0[\ga].
\]
It is easily checked that
\begin{equation}\label{eq:sqcup}
 S^0_\gl[\ga] = S^0_{\gl_2}[\ga] \sqcup S^{0,k}_\gl[\ga].
\end{equation}
Next we would like to prove that
\begin{equation}\label{eq:0forS}
 c_{\ba} = 0 \ \ \text{for all} \ \ba \in S^0_{\gl_2}[\ga],
\end{equation}
and in order to do that we will first prove that
\begin{equation}\label{eq:claim}
 \bf_{\ba}v_2 \in \C^\times f_{\ga_1}\bf_{\ba+(\be_4-\be_5)}v_2 + U(\fg)(L_2)_{>\gl_2-(\ga -\ga_1)} \ \ \text{if} \ 
 \ba \in S^{0,k}_\gl[\ga].
\end{equation}
Assume that $\br =(0,r_2,r_3,r_4,r_3+k) \in S^{0,k}_\gl[\ga]$. 
We see by a direct calculation that
\begin{equation}\label{eq:sl2}
 e_{\ga_1}^{r_4}\bf_{\br+r_4(\be_5-\be_4)}v_2 \in \C^\times \bf_{\br}v_2 \ \ \text{and} \ \ e_{\ga_1}^{r_4+1}
  \bf_{\br+r_4(\be_5-\be_4)}v_2 \in \C^\times \bf_{\br+(\be_4-\be_5)}v_2.
\end{equation} 
Since 
\[ \wt(f_{\ga_1}\bf_{\br+r_4(\be_5-\be_4)}v_2) = -(r_4+3)\go_1 + (l-r_2 -3r_3+3)\go_2,
\]
it follows that
\[ s_1 \wt(f_{\ga_1}\bf_{\br+r_4(\be_5-\be_4)}v_2) = \wt(\bf_{\br}v_2) + 2\ga_1 > \gl_2-(\ga-\ga_1),
\]
which implies $f_{\ga_1}\bf_{\br+r_4(\be_5-\be_4)}v_2 \in U(\fg)(L_2)_{>\gl_2-(\ga-\ga_1)}.$
Hence it follows that
\begin{align*}
 f_{\ga_1}e_{\ga_1}^{r_4+1}\bf_{\br+r_4(\be_5-\be_4)}v_2 &= (e_{\ga_1}^{r_4+1}f_{\ga_1} + [f_{\ga_1},e_{\ga_1}^{r_4+1}])
 \bf_{\br+r_4(\be_5-\be_4)}v_2\\
 &\in \C^\times e_{\ga_1}^{r_4}\bf_{\br+r_4(\be_5-\be_4)}v_2 + U(\fg)(L_2)_{>\gl_2-(\ga-\ga_1)},
\end{align*}
which together with (\ref{eq:sl2}) imply (\ref{eq:claim}).
Let $\pr^2_{\gl_2-\ga}\colon L_2 \twoheadrightarrow L_2/U(\fg)(L_2)_{>\gl_2-\ga}$ be the canonical projection.
Since $U(\fg)(L_1 \otimes L_2)_{>\gl-\ga} \subseteq L_1\otimes U(\fg)(L_2)_{>\gl_2-\ga}$,
the composition 
\[ L(m) \hookrightarrow L_1\otimes L_2 \twoheadrightarrow L_1 \otimes \big(L_2/U(\fg)(L_2)_{>\gl_2-\ga}\big)
\]
induces a $\fg$-module homomorphism $\ol{L(m)}\to L_1 \otimes \big(L_2/U(\fg)(L_2)_{>\gl_2-\ga}\big)$.
We see from (\ref{eq:claim}) that $\pr^2_{\gl_2-\ga}(\bf_{\ba}v_2)=0$ if $\ba \in S^{0,k}_\gl[\ga]$, 
and then (\ref{eq:sum2}), (\ref{eq:sum3}), (\ref{eq:sqcup}) and the induced homomorphism yield 
\[ v_1 \otimes \Big(\sum_{\ba \in S^0_{\gl_2}[\ga]} c_{\ba}\pr^2_{\gl_2-\ga}(\bf_{\ba} v_2)\Big) =0.
\]
By the induction hypothesis this implies (\ref{eq:0forS}), as required.

We have
\begin{equation}\label{eq:sum5}
 \sum_{\ba \in S^{0,k}_\gl[\ga]} c_{\ba}\pr(\bf_{\ba}v) = 0
\end{equation}
by (\ref{eq:sum4}), (\ref{eq:sum2}) and (\ref{eq:0forS}), and
it remains to show that $c_{\ba} = 0$ for $\ba \in S^{0,k}_\gl[\ga]$.
Fix $\br=(r_1,\ldots,r_5) \in S^{0,k}_\gl[\ga]$, and set $\bs = \br + \be_4-\be_5$.
We define a $\fg$-submodule $L_2'$ of $L_2$ by
\[ L_2' = \sum_{\begin{smallmatrix} \ba \in S_{\gl_2} \\ \wt(\ba) < \ga,\, \ba \neq \bs \end{smallmatrix}} 
   U(\fg)\bf_{\ba}v_2.
\]
We have $(L_2)_{>\gl_2-\ga} \subseteq \C\bf_{\bs}v_2 + L_2'$ by Lemma \ref{Lemma:span},
and from this we see that
\begin{align*}
 (L_1\otimes L_2)_{>\gl-\ga} &= \C v_1 \otimes (L_2)_{>\gl_2-\ga} \oplus \bigoplus_{\gb > 0} (L_1)_{\go_1-\gb}
 \otimes (L_2)_{>\gl_2-\ga+\gb} \\ 
 &\subseteq \C v_1 \otimes \bf_{\bs}v_2 + L_1 \otimes L_2',
\end{align*}
which implies $U(\fg)(L_1 \otimes L_2)_{>\gl-\ga} \subseteq U(\fg)(v_1 \otimes \bf_{\bs}v_2) + L_1 \otimes L_2'$.
Hence the composition 
\[ \rho\colon L(m) \hookrightarrow L_1 \otimes L_2 \twoheadrightarrow (L_1 \otimes L_2)
   \Big/\big(U(\fg)(v_1 \otimes \bf_{\bs}v_2) + L_1 \otimes L_2'\big)
\]
induces a $\fg$-module homomorphism
\[ \ol{\rho}\colon \ol{L(m)} \to (L_1 \otimes L_2)\Big/\big(U(\fg)(v_1 \otimes \bf_{\bs}v_2) + L_1 \otimes L_2'\big).
\]
If $\ba \in S_{\gl}^{0,k}[\ga] \setminus \{\br\}$, then we have $\ba + \be_4-\be_5 \in S_{\gl_2}[\ga-\ga_1]\setminus \{\bs\}$ 
and hence it follows by (\ref{eq:claim}) that
\[ \bf_{\ba}(v_1 \otimes v_2) = v_1 \otimes \bf_{\ba}v_2 \in L_1 \otimes L_2'.
\]
Hence we have from (\ref{eq:sum5}) that
\[ 0 = \ol{\rho}\Big(\sum_{\ba \in S^{0,k}_\gl[\ga]} c_{\ba}\pr(\bf_{\ba}v)\Big) =\sum_{\ba \in S^{0,k}_\gl[\ga]} c_{\ba} 
       \rho(\bf_{\ba}v) = c_{\br}\rho(\bf_{\br}v).
\]
Assume that $c_{\br} \neq 0$, which implies $\rho(\bf_{\br}v) = 0$.
Let $\pr_2'$ denote the canonical projection $L_2 \to L_2/L_2'$.
We easily see that $\rho(\bf_{\br}v) = 0$ is equivalent to
\begin{equation}\label{eq:containment2}
 v_1 \otimes \pr_2'(\bf_{\br}v_2) \in U(\fg)\big(v_1 \otimes \pr_2'(\bf_{\bs}v_2)\big).
\end{equation}
Note that $\pr_2'(\bf_{\bs}v_2) \neq 0$ by the induction hypothesis,
and this also implies $\pr_2'(\bf_{\br}v_2) \neq 0$ since $e_{\ga_1}\pr_2'(\bf_{\br}v_2) \in \C^\times \pr_2'(\bf_{\bs}v_2)$ 
by (\ref{eq:sl2}).
Since 
\[ \fn_+\big(v_1 \otimes \pr_2'(f_{\bs}v_2)\big)=0 \ \text{and} \
   \wt\big(v_1 \otimes \pr_2'(\bf_{\br}v_2)\big) = \wt\big(v_1 \otimes \pr_2'(\bf_{\bs}v_2)\big)-\ga_1,
\]
(\ref{eq:containment2}) implies
\[ v_1 \otimes \pr_2'(\bf_{\br}v_2) \in \C f_{\ga_1}\big(v_1 \otimes \pr_2'(\bf_{\bs}v_2)\big).
\]
However this contradicts $f_{\ga_1}v_1 \neq 0$.
Hence $c_{\br}=0$ holds, as required.

\section*{Acknowledgement}
The authors would like to express their gratitude to Professors Evgeny Mukhin and Masato Okado for helpful discussions. Jian-Rong Li is supported by ERC AdG Grant 247049, the PBC Fellowship Program of Israel for Outstanding Post-Doctoral Researchers from China and India, the National Natural Science Foundation of China (no.\ 11371177, 11401275), the Fundamental Research Funds for the Central Universities of China (no. lzujbky-2015-78), and Katsuyuki Naoi is supported by JSPS Grant-in-Aid for Young Scientists (B) No.\ 25800006.

\newcommand{\etalchar}[1]{$^{#1}$}
\def\cprime{$'$} \def\cprime{$'$}


\begin{thebibliography}{HKO{\etalchar{+}}99}

\bibitem[Cha95]{MR1367675}
V.~Chari.
\newblock Minimal affinizations of representations of quantum groups: the rank
  {$2$} case.
\newblock {\em Publ. Res. Inst. Math. Sci.}, 31(5):873--911, 1995.

\bibitem[Cha01]{MR1836791}
V.~Chari.
\newblock On the fermionic formula and the {K}irillov-{R}eshetikhin conjecture.
\newblock {\em Int. Math. Res. Not. IMRN}, (12):629--654, 2001.

\bibitem[Cha02]{MR1883181}
V.~Chari.
\newblock Braid group actions and tensor products.
\newblock {\em Int. Math. Res. Not.}, (7):357--382, 2002.

\bibitem[CM06]{MR2238884}
V.~Chari and A.~Moura.
\newblock The restricted {K}irillov-{R}eshetikhin modules for the current and
  twisted current algebras.
\newblock {\em Comm. Math. Phys.}, 266(2):431--454, 2006.

\bibitem[CM07]{MR2372556}
V.~Chari and A.~Moura.
\newblock Kirillov-{R}eshetikhin modules associated to {$G_2$}.
\newblock In {\em Lie algebras, vertex operator algebras and their
  applications}, volume 442 of {\em Contemp. Math.}, pages 41--59. Amer. Math.
  Soc., Providence, RI, 2007.

\bibitem[CP94]{MR1300632}
V.~Chari and A.~Pressley.
\newblock {\em A guide to quantum groups}.
\newblock Cambridge University Press, Cambridge, 1994.

\bibitem[CP95a]{MR1347873}
V.~Chari and A.~Pressley.
\newblock Minimal affinizations of representations of quantum groups: the
  nonsimply-laced case.
\newblock {\em Lett. Math. Phys.}, 35(2):99--114, 1995.

\bibitem[CP95b]{MR1357195}
V.~Chari and A.~Pressley.
\newblock Quantum affine algebras and their representations.
\newblock In {\em Representations of groups ({B}anff, {AB}, 1994)}, volume~16
  of {\em CMS Conf. Proc.}, pages 59--78. Amer. Math. Soc., Providence, RI,
  1995.

\bibitem[Dri87]{MR914215}
V.~G. Drinfel{\cprime}d.
\newblock A new realization of {Y}angians and of quantum affine algebras.
\newblock {\em Dokl. Akad. Nauk SSSR}, 296(1):13--17, 1987.

\bibitem[FL07]{MR2323538}
G.~Fourier and P.~Littelmann.
\newblock Weyl modules, {D}emazure modules, {KR}-modules, crystals, fusion
  products and limit constructions.
\newblock {\em Adv. Math.}, 211(2):566--593, 2007.

\bibitem[FR99]{MR1745260}
E.~Frenkel and N.~Reshetikhin.
\newblock The {$q$}-characters of representations of quantum affine algebras
  and deformations of {$W$}-algebras.
\newblock In {\em Recent developments in quantum affine algebras and related
  topics ({R}aleigh, {NC}, 1998)}, volume 248 of {\em Contemp. Math.}, pages
  163--205. Amer. Math. Soc., Providence, RI, 1999.

\bibitem[Her06]{MR2254805}
D.~Hernandez.
\newblock The {K}irillov-{R}eshetikhin conjecture and solutions of
  {$T$}-systems.
\newblock {\em J. Reine Angew. Math.}, 596:63--87, 2006.

\bibitem[HKO{\etalchar{+}}99]{MR1745263}
G.~Hatayama, A.~Kuniba, M.~Okado, T.~Takagi, and Y.~Yamada.
\newblock Remarks on fermionic formula.
\newblock In {\em Recent developments in quantum affine algebras and related
  topics ({R}aleigh, {NC}, 1998)}, volume 248 of {\em Contemp. Math.}, pages
  243--291. Amer. Math. Soc., Providence, RI, 1999.

\bibitem[Jim85]{MR797001}
M.~Jimbo.
\newblock A {$q$}-difference analogue of {$U({\mathfrak g})$} and the
              {Y}ang-{B}axter equation.
\newblock {\em Lett. Math. Phys.}, 10(1):63--69,1985.

\bibitem[LLM02]{MR1887117}
V.~Lakshmibai, P.~Littelmann, and P.~Magyar.
\newblock Standard monomial theory for {B}ott-{S}amelson varieties.
\newblock {\em Compositio Math.}, 130(3):293--318, 2002.

\bibitem[LM13]{LM13} 
J.~R.~Li, E.~Mukhin.
\newblock Extended T -system of type $G_{2}$. 
\newblock {\em SIGMA Symmetry, Integrability Geom. Methods Appl.}, 9:Paper 054, 28, 2013.
  20, 2014.

\bibitem[Mat88]{MR932325}
O.~Mathieu.
\newblock Construction du groupe de {K}ac-{M}oody et applications.
\newblock {\em C. R. Acad. Sci. Paris S\'er. I Math.}, 306(5):227--230, 1988.

\bibitem[Mou10]{MR2587436}
A.~Moura.
\newblock Restricted limits of minimal affinizations.
\newblock {\em Pacific J. Math.}, 244(2):359--397, 2010.

\bibitem[MP11]{MR2896463}
A.~Moura and F.~Pereira.
\newblock Graded limits of minimal affinizations and beyond: the multiplicity
  free case for type {$E_6$}.
\newblock {\em Algebra Discrete Math.}, 12(1):69--115, 2011.

\bibitem[MPr07]{MPr07} 
M.~G.~Moakes, A.~N.~Pressley.
\newblock $q$-characters and minimal affinizations. 
\newblock {\em Int. Electron. J. Algebra}, 1:55--97, 2007. 

\bibitem[MY14]{MY14} 
E.~Mukhin, C.~A.~S.~Young, 
\newblock Affinization of category $\mathcal{O}$ for quantum groups. 
\newblock {\em Trans. Amer. Math. Soc.}, 366(9):4815–4847, 2014.

\bibitem[Nao12]{MR2855081}
K.~Naoi.
\newblock Weyl modules, {D}emazure modules and finite crystals for non-simply
  laced type.
\newblock {\em Adv. Math.}, 229(2):875--934, 2012.

\bibitem[Nao13]{MR3120578}
K.~Naoi.
\newblock Demazure modules and graded limits of minimal affinizations.
\newblock {\em Represent. Theory}, 17:524--556, 2013.

\bibitem[Nao14]{MR3210588}
K.~Naoi.
\newblock Graded limits of minimal affinizations in type {$D$}.
\newblock {\em SIGMA Symmetry Integrability Geom. Methods Appl.}, 10:Paper 047, 20, 2014.

\bibitem[QL14]{QL14} 
L.~Qiao, J.~R.~Li, 
\newblock Cluster algebras and minimal affinizations of representations of the quantum group of type $G_2$. 
\newblock arXiv:1412.3884, 1--17, 2014.


\end{thebibliography}
\end{document}